\theoremstyle{plain}
\newtheorem{theorem}{Theorem}[section]
\newtheorem{proposition}[theorem]{Proposition}
\newtheorem{lemma}[theorem]{Lemma}
\theoremstyle{definition}
\newtheorem{example}[theorem]{Example}
\newtheorem{definition}[theorem]{Definition}
\def\Im{\operatorname{Im}}
\def\Ker{\operatorname{Ker}}
\def\res{\operatorname{res}}
\def\Sign{\operatorname{Sign}}
\def\Int{\operatorname{Int}}
\def\Diff{\operatorname{Diff}}
\def\half{\operatorname{half}}
\def\id{\operatorname{id}}
\def\gcd{\operatorname{gcd}}
\def\I{\operatorname{I}}
\def\II{\operatorname{II}}
\def\SL{\operatorname{SL}}
\def\ori{\operatorname{ori}}
\def\loc{\operatorname{loc}}
\def\sign{\operatorname{sign}}
\begin{document}
\title{A signature formula for hyperelliptic broken Lefschetz fibrations}

\author{Kenta Hayano}
\address{Department of Mathematics, Graduate School of Science, 
Osaka University, Toyonaka, Osaka 560-0043, Japan}
\email{k-hayano@cr.math.sci.osaka-u.ac.jp}

\author{Masatoshi Sato}
\address{Department of Mathematics, Graduate School of Science,
Osaka University, Toyonaka, Osaka 560-0043, Japan}
\email{m-sato@cr.math.sci.osaka-u.ac.jp}

\maketitle

\begin{abstract}

A hyperelliptic broken Lefschetz fibration is a generalization of a hyperelliptic Lefschetz fibration. 
We construct and compute a local signature of hyperelliptic directed broken Lefschetz fibrations
by generalizing Endo's local signature of hyperelliptic Lefschetz fibrations.
It is described by his local signature and a rational-valued homomorphism on the subgroup of
the hyperelliptic mapping class group which preserves a simple closed curve setwise.

\end{abstract}

\section{Introduction}

A broken Lefschetz fibration is a smooth map introduced in \cite{auroux2005slp} from a four-manifold to a surface  which has at most two types of singularities, called Lefschetz singularity and indefinite fold singularity. 
It can be considered as a generalization of a Lefschetz fibration,
and combining the results of Williams \cite{williams2010hbl} and Lekili \cite{lekili2009wfn},
it is proved that every closed oriented four-manifold admits a directed (more strictly, simplified) broken Lefschetz fibration.

In \cite{hayano2011fah}, 
we defined a hyperelliptic directed broken Lefschetz fibration as a generalization of a hyperelliptic Lefschetz fibration.
We showed that, when the genus of any component of any fiber is greater than or equal to two, 
after blowing up several times, the total space is a double branched covering of a manifold obtained by blowing up a sphere bundle over the sphere.
We also proved that the second rational homology class represented by a general fiber is nontrivial.
As a corollary, $\sharp n\mathbb{CP}^2$ does not admit this fibration structure.

The purpose of this paper is to investigate the homeomorphism types of hyperelliptic directed broken Lefschetz fibrations generalizing Endo's local signature in \cite{endo2000mss} of hyperelliptic Lefschetz fibrations.
See, for example, \cite{ashikaga2006vad} and \cite{ashikaga2000gal} for the history of local signatures.

We call a simple closed curve in $\Sigma_g$ is type $\I$ or type $\II_h$
if it is non-separating or separating which bounds subsurfaces of genus $h$ and $g-h$, respectively.
\begin{figure}[htbp]
\begin{center}
\includegraphics[width=90mm]{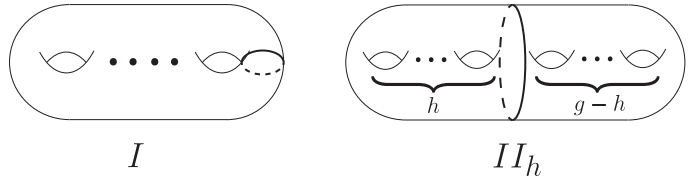}
\end{center}
\caption{type $\I$ and type $\II_h$}
\label{fig:vanishing cycles}
\end{figure}
We also call a Lefschetz singular fiber is type $\I$ or type $\II_h$
if the vanishing cycle is type $\I$ or type $\II_h$, respectively. 
Assign rational numbers to these types of singular fibers as
\[
\sigma_{\loc}(\I)=-\frac{g+1}{2g+1},\quad
\sigma_{\loc}(\II_h)=\frac{4h(g-h)}{2g+1}-1. 
\]
Endo showed that the signature of the total space of a hyperelliptic Lefschetz fibration is equal to the sum of these numbers of the Lefschetz singular fibers in the fibration (see Section \ref{section:local-signature}, for details).

To explain our main theorem, we need some notation.
Let $\Sigma_g$ be a closed oriented surface of genus $g$,
and $\mathcal{M}_g$ denote its mapping class group.
For a simple closed curve $c$ in $\Sigma_g$,
we denote by $\mathcal{M}_g(c)$ the subgroup of $\mathcal{M}_g$
which consists of mapping classes represented by diffeomorphism preserving the curve $c$ setwise.
We also denote by $\mathcal{M}_g(c^{\ori})$ the subgroup of $\mathcal{M}_g(c)$
which consists of mapping classes preserving the curve $c$ setwise and its orientation.
Let $\iota_g$ denote the involution of $\Sigma_g$ as in Figure \ref{fig:inv}. 
\begin{figure}[htbp]
\begin{center}
\includegraphics[width=110mm]{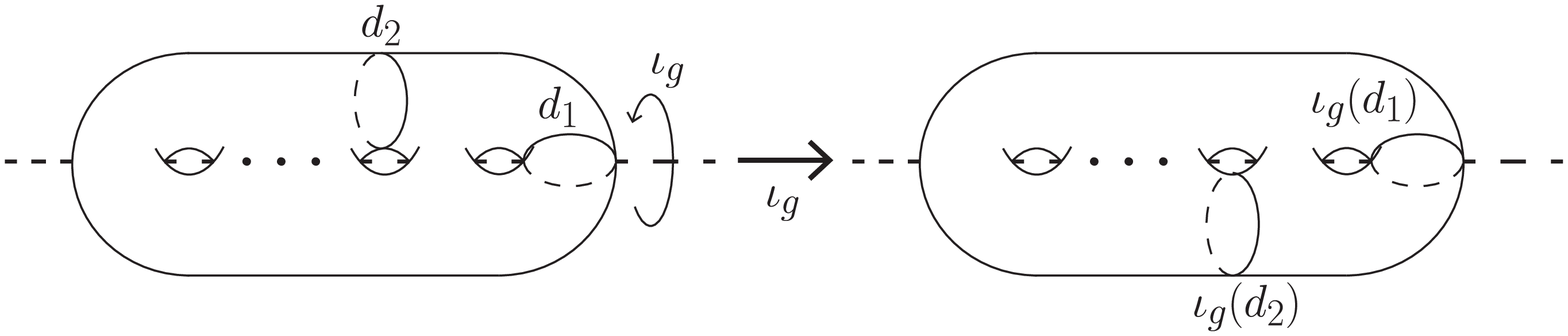}
\end{center}
\label{fig:inv}
\caption{involution $\iota_g$}
\end{figure}
We denote by $\mathcal{H}_g$ the hyperelliptic mapping class group, that is, the subgroup of $\mathcal{M}_g$
which consists of mapping classes represented by diffeomorphisms $T$ satisfying $T\iota_g=\iota_gT$.
For a simple closed curve $c$ such that $\iota_g(c)=c$, 
we also denote by $\mathcal{H}_g(c)$ and $\mathcal{H}_g(c^{\ori})$
the subgroups $\mathcal{H}_g(c)=\mathcal{H}_g\cap\mathcal{M}_g(c)$ and $\mathcal{H}_g(c^{\ori})=\mathcal{H}_g\cap\mathcal{M}_g(c^{\ori})$, respectively.
In Lemma \ref{lem:homo-h},
we will define rational-valued homomorphisms $h_{g,c}$ on $\mathcal{H}_g(c)$ when $c$ is non-separating, and on $\mathcal{H}_g(c^{\ori})$ when $c$ is separating.

Let $f:M\to S^2$ be a directed broken Lefschetz fibration. 
We denote by $Z_i$ the image of each component of the indefinite fold singularities under $f$.
Decompose the 2-sphere into annuli $A_i$ each of which is a neighborhood of $Z_i\subset S^2$ for $i=1,2,\cdots,m$,
and disks $D_l$ and $D_h$ as in Figure \ref{fig:basedecomposition}.
We may choose $D_h$ so that the image $\{y_1,\cdots,y_n\}\subset S^2$ of all the Lefschetz singularities is in $\Int D_h$.
We denote by $\partial_1 A_i$ the boundary component of $A_i$ such that $\partial_1 A_i=A_i\cap A_{i+1}$ for $i=1,\cdots,m-1$ and $\partial_1 A_m=A_m\cap D_l$.
We also denote by $\partial_0 A_i$ the other boundary component of $A_i$.
\begin{figure}[htbp]
\begin{center}
\includegraphics[width=50mm]{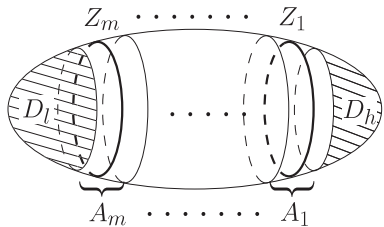}
\end{center}
\caption{annuli $A_i$ and disks $D_l$ and $D_h$ in $S^2$}
\label{fig:basedecomposition}
\end{figure}

For each $i=1,\cdots,m$,
there is a unique component $M_i$ of $f^{-1}(A_i)$ where an indefinite fold singularity exists.
Let $g_i$ denote the genus of a fiber in the mapping torus $\partial_0 A_i\cap M_i$.
Identifying the fiber with $\Sigma_{g_i}$, we consider the vanishing cycle $d_i$
of the indefinite fold singularity is in $\Sigma_{g_i}$.
We assume $f$ to be hyperelliptic, whose definition we will give in Section \ref{section:hyp-MCG}.
By the definition of the hyperelliptic directed broken Lefschetz fibration  and Lemma \ref{lem:baykur} by Baykur,
the monodromy $\varphi_i$ of the mapping torus is in $\mathcal{H}_{g_i}(d_i)$.
Then, our main theorem is as follows:
\begin{theorem}\label{thm:main theorem}
Let $f:M\to S^2$ be a hyperelliptic directed broken Lefschetz fibration as above. Then, we have
\[
\Sign M=\sum_{i=1}^{m}h_{g_i,d_i}(\varphi_i)+\sum_{j=1}^{n}\sigma_{\loc}(f^{-1}(y_j)).
\]
\end{theorem}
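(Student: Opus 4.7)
My plan is to pull back the given base decomposition $S^2 = D_l \cup A_1 \cup \cdots \cup A_m \cup D_h$ to a 4-dimensional decomposition of $M$, apply Novikov additivity of the signature along the separating mapping tori, and then identify each piece's signature with the corresponding term in the claimed formula. Concretely, I write
\[
\Sign M = \Sign f^{-1}(D_l) + \sum_{i=1}^m \Sign f^{-1}(A_i) + \Sign f^{-1}(D_h),
\]
after verifying that the Wall non-additivity correction vanishes on each interface. Since every interface 3-manifold is a surface bundle over $S^1$, the three Lagrangian subspaces appearing in Wall's formula all coincide with the kernel of the inclusion of the fiber, so the Maslov triple index is zero and honest additivity holds.

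\textbf{Identifying the three kinds of pieces.} The piece $f^{-1}(D_l)$ is a $\Sigma$-bundle over the disk, hence diffeomorphic to a product, so $\Sign f^{-1}(D_l)=0$. The piece $f^{-1}(D_h)$ is a hyperelliptic Lefschetz fibration over a disk whose singular values are exactly $y_1,\dots,y_n$; I would invoke Endo's local signature formula, localized to the disk by capping off the boundary mapping torus with a trivial $\Sigma$-bundle over a disk and applying his closed-manifold formula (the cap has zero signature and does not contribute), yielding $\Sign f^{-1}(D_h)=\sum_{j=1}^n\sigma_{\loc}(f^{-1}(y_j))$. For each annular piece, only the component $M_i$ containing the indefinite fold can have nonzero signature, since the other components of $f^{-1}(A_i)$ are surface bundles over an annulus; so $\Sign f^{-1}(A_i)=\Sign M_i$, and it remains to show $\Sign M_i = h_{g_i,d_i}(\varphi_i)$.

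\textbf{Reduction to the construction of $h_{g,c}$.} This last equality should be exactly the design feature of the homomorphisms $h_{g,c}$ built in Lemma \ref{lem:homo-h}. The expected mechanism is: a hyperelliptic mapping torus of $\varphi\in\mathcal{H}_g(c)$ bounds, via the round-handle cobordism associated to the indefinite fold with vanishing cycle $c$, a canonical 4-manifold whose signature is a class function on $\mathcal{H}_g(c)$; by the hyperelliptic assumption this 4-manifold arises as the double branched cover of a simpler piece (a thickened sphere bundle over an arc), and the $G$-signature / branched-cover formula converts the signature computation into a combinatorial invariant of $\varphi$ which is verified to be a homomorphism. Thus $h_{g_i,d_i}(\varphi_i)$ equals $\Sign M_i$ tautologically from the construction, and summing the three contributions completes the proof.

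\textbf{Main obstacle.} The real work, and the subtle point I would spend the most time on, is the annular step: ensuring that the signature of $M_i$ depends only on the conjugacy-like data $(\varphi_i,d_i)$ (so that a well-defined map on $\mathcal{H}_{g_i}(d_i)$ or $\mathcal{H}_{g_i}(d_i^{\ori})$ exists) and checking the homomorphism property by a cocycle-splitting argument for Meyer's signature cocycle restricted to these stabilizer subgroups. The secondary obstacle is a clean justification of Novikov additivity and of the localization of Endo's formula to $D_h$, both of which are standard but must be done carefully because the interface mapping tori have nontrivial monodromy.
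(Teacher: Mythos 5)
Your decomposition of $M$ and the use of Novikov additivity match the paper, but both of your piecewise identifications are false, and the two errors are not independent: they amount to the same missing boundary term. First, $\Sign f^{-1}(D_h)\neq\sum_j\sigma_{\loc}(f^{-1}(y_j))$ in general. The monodromy of $f^{-1}(\partial D_h)$ is the product $\tilde{\varphi}_1$ of all the Dehn twists, which is a nontrivial mapping class whenever there are indefinite folds, so you cannot cap off $\partial f^{-1}(D_h)$ with a trivial $\Sigma$-bundle over a disk. The paper instead applies Endo's Lemma \ref{lem:meyer} to the bundle over $D_h-\coprod_j\Int\nu(y_j)$, which has $n+1$ boundary components, and obtains $\Sign f^{-1}(D_h)=\sum_j\sigma_{\loc}(f^{-1}(y_j))-\phi_g(\tilde{\varphi}_1^{-1})$; the extra Meyer-function term does not vanish. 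Second, $\Sign M_i\neq h_{g_i,d_i}(\varphi_i)$; this is not "tautological from the construction." By Lemmas \ref{lem:signature} and \ref{lem:signature round-cob} the signature of the round cobordism is $s(\varphi_i)\in\{-1,0,1\}$, whereas $h_{g,c}(\varphi)=s(\varphi)+\phi_g(\varphi)-\Phi^*\phi(\varphi)$ carries Meyer-function corrections on top of $s$. For example $s(t_{c_{2g+1}})=-1$ while $h_{g,c}(t_{c_{2g+1}})=-g/(2g+1)$, so the two quantities differ already on a single Dehn twist along the vanishing cycle.

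The idea you are missing is the telescoping cancellation that makes the formula close up. The monodromies satisfy $\tilde{\varphi}_{i+1}=\Phi_{S_i}(\tilde{\varphi}_i)$ and $\tilde{\varphi}_{m+1}=1$ (because $f^{-1}(D_l)$ is trivial), so
\[
\sum_{i=1}^m\bigl(\phi_{S_i}(\tilde{\varphi}_i)-\Phi_{S_i}^*\phi_{S_{i+1}}(\tilde{\varphi}_i)\bigr)=\phi_g(\tilde{\varphi}_1),
\]
and this exactly cancels the term $-\phi_g(\tilde{\varphi}_1^{-1})=\phi_g(\tilde{\varphi}_1)$ coming from the $D_h$ piece. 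In other words, the signature of $M$ does not localize piece-by-piece onto the annuli and the disk separately; the homomorphism $h_{g,c}$ is engineered precisely to absorb the Meyer-function boundary defect of the neighboring pieces, and the theorem is an identity about the whole telescoping sum, not a sum of independently valid local identities. (A secondary remark: your proposed construction of $h_{g,c}$ via branched covers and the $G$-signature theorem is not what the paper does — it uses the coboundary relation $-\tau_g+\Phi^*\tau+\,\delta s=0$ obtained from an $L(c)$-bundle over a pair of pants together with Wall non-additivity for the round-handle attachment — but that choice is not where your argument breaks down.)
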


We will prove Theorem \ref{thm:main theorem} in Section \ref{section:localization}.
As we will see in Section \ref{section:non-additivity}, 
it is easy to calculate the explicit values of $h_{g_i,d_i}$ since it is a homomorphism.

In Section \ref{section:H_g(c)},
we will compute the abelianization and find a generating set of the groups $\mathcal{H}_g(c)$ and $\mathcal{H}_g(c^{\ori})$.
Let $c_1,c_2,\cdots,c_{2g+1}$ be simple closed curves in Figure \ref{fig:scc},
and $t_c$ denote the Dehn twist along a simple closed curve $c\subset \Sigma_g$.
\begin{figure}[htbp]
\begin{center}
\includegraphics[width=55mm]{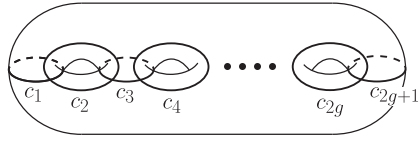}
\end{center}
\caption{simple closed curves $c_1,\cdots,c_{2g+1}$}
\label{fig:scc}
\end{figure}

\begin{proposition}\label{prop:generating set}
Let $g\ge1$.
\begin{enumerate}
\item Let $c$ be a non-separating simple closed curve of type $\I$ in Figure \ref{fig:vanishing cycles}.
The group $\mathcal{H}_g(c)$ is generated by $\{t_{c_1},\cdots,t_{c_{2g-1}},t_{c_{2g+1}},\iota_g\}$.

\item Let $1\le h\le g-1$, and $c$ a separating simple closed curve of type $\II_h$ in Figure \ref{fig:vanishing cycles}.
The group $\mathcal{H}_g(c^{\ori})$ is generated by $\{t_{c_1},t_{c_2},\cdots,t_{c_{2h}},t_{c_{2h+2}},t_{c_{2h+3}},\cdots,t_{c_{2g+1}}\}$.
\end{enumerate}
\end{proposition}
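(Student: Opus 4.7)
The plan is in two stages: verify containment of each listed generator in the stabilizer, then show these generators exhaust the stabilizer via the Birman--Hilden correspondence and a sphere-side stabilizer computation.

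Containment is routine. For each listed chain twist $t_{c_i}$, one can isotope $c_i$ disjoint from $c$ (visible in Figures \ref{fig:vanishing cycles} and \ref{fig:scc}), so $t_{c_i}$ stabilizes $c$. In part (i), $\iota_g$ preserves $c$ setwise by hypothesis and is included in the list because it reverses the orientation of $c$ (it has two fixed points on $c$), so it is not expressible purely from orientation-preserving twists. In part (ii), $\iota_g$ acts on $c$ as the antipodal map (since $c$ misses all fixed points of $\iota_g$), hence preserves orientation and lies in $\mathcal{H}_g(c^{\ori})$; it is omitted from the listed set because it is already expressible via chain relations on the two subsurfaces cut out by $c$.

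For generation, I would use the Birman--Hilden exact sequence $1 \to \langle \iota_g \rangle \to \mathcal{H}_g \to \mathcal{M}_{0,2g+2} \to 1$, under which each $t_{c_i}$ descends to a half-twist $\bar t_i$ along an arc $\bar c_i$ between two consecutive branch points on $S^2$. The $\iota_g$-invariant curve $c$ descends either to an arc $\bar c$ joining two branch points (type $\I$) or a simple closed curve separating the branch points into groups of $2h+1$ and $2g-2h+1$ (type $\II_h$). The image of $\mathcal{H}_g(c)$ (resp.\ $\mathcal{H}_g(c^{\ori})$) in $\mathcal{M}_{0,2g+2}$ equals the corresponding stabilizer of $\bar c$. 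A consistent placement of $\bar c$ relative to the chain arcs shows that $\bar c$ meets only $\bar c_{2g}$ in part (i) and only $\bar c_{2h+1}$ in part (ii); classical structure theorems for sphere braid groups then identify the sphere-side stabilizer as generated precisely by those $\bar t_i$ with $i$ in the listed range. Lifting through Birman--Hilden, and adjoining $\iota_g$ in part (i) to hit the kernel, produces the listed generating sets.

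The main obstacle will be the sphere-side stabilizer calculation---identifying the stabilizer of $\bar c$ in $\mathcal{M}_{0,2g+2}$ as the expected subgroup of half-twist elements---together with the orientation and lifting bookkeeping. Care is needed to ensure that lifting the half-twist generators yields precisely the listed Dehn twists (up to inverses, which do not affect the generated subgroup) and, in part (ii), to verify that $\iota_g$ lies in the subgroup generated by the listed twists via the chain relation $(t_{c_1}\cdots t_{c_{2h}})^{4h+2}=t_c$ on one side of $c$ together with its analogue on the other side, whose product yields the hyperelliptic involution.
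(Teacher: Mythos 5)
Your part (i) follows the paper's route almost exactly: the paper restricts the Birman--Hilden projection $\mathcal{P}_g$ to the stabilizer, identifies $\mathcal{P}_g(\mathcal{H}_g^s(c))$ with the stabilizer $\mathcal{M}_0^{2g}(\gamma)$ of the arc $\gamma=p(c)$, shows $\mathcal{M}_0^{2g}(\gamma)\cong\mathcal{M}_{0,\half}^{2g}$ and that the latter is generated by $\sigma_1,\dots,\sigma_{2g-1}$ together with the half twist $\sigma$ along $\partial D^2$ (Lemmas \ref{lem:H_g^s(c)}--\ref{lem:M_{0,half}}), and adjoins $\iota_g$ to account for the kernel. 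For part (ii) you genuinely diverge: the paper does not project the separating case to the sphere at all. It first cuts $\Sigma_g$ along $c$, proving $\mathcal{H}_{h,1}\times\mathcal{H}_{g-h,1}\twoheadrightarrow\mathcal{H}_g(c^{\ori})$ (Lemma \ref{lem:sep-generator}), and only then identifies each factor with a disk mapping class group via $\mathcal{P}_{g,1}$; the sphere-side stabilizer computation for a separating curve $\bar c$ that your plan requires is thereby replaced by the standard cutting sequence of \cite{farb2011pmcg}. Your order of operations (project first, then compute the stabilizer of $\bar c$ in $\mathcal{M}_0^{2g+2}$ and lift) can be made to work, but it buys nothing: you still have to prove the sphere analogue of Lemma \ref{lem:sep-generator} and then control the lift.

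Two concrete soft spots. First, in both parts you assert that the image of $\mathcal{H}_g(c)$ downstairs \emph{equals} the stabilizer of $\bar c$; the substantive half of this is that every class in $\mathcal{H}_g(c)$ admits a representative that simultaneously commutes with $\iota_g$ and preserves $c$ setwise. This is exactly Lemma \ref{lem:isomH_g(c)}, which the paper explicitly flags as not obvious and imports from \cite{hayano2011fah}; your sketch needs it as a stated input, not as bookkeeping. Second, your expression for $\iota_g$ in part (ii) is wrong as written: $(t_{c_1}\cdots t_{c_{2h}})^{4h+2}=t_c$ and likewise on the other side, so the product of the two chain relations is $t_c^2$, not $\iota_g$. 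The correct statement uses the half powers $(t_{c_1}\cdots t_{c_{2h}})^{2h+1}$ and $(t_{c_{2h+2}}\cdots t_{c_{2g+1}})^{2(g-h)+1}$, each of which restricts to the hyperelliptic involution rel boundary on its side, their product giving $\iota_g$ up to a power of $t_c$ (which is itself in the subgroup). Your conclusion that $\iota_g$ is redundant in part (ii) is correct, but in the paper's approach it is automatic from the surjectivity of $\Psi$ rather than from an explicit relation.
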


In Section \ref{section:local signature},
we will construct rational-valued homomorphisms $h_{g,c}$ on $\mathcal{H}_g(c)$ when $c$ is non-separating, and on $\mathcal{H}_g(c^{\ori})$ when $c$ is separating.
We will also compute their values on the generating sets in Proposition \ref{prop:generating set}. 
\begin{proposition}\label{prop:values of h}

\begin{enumerate}
\item Let $g\ge1$, and $c$ a non-separating simple closed curve of type $\I$ in Figure~\ref{fig:vanishing cycles}.
The values of the homomorphism $h_{g,c}:\mathcal{H}_g(c)\to\mathbb{Q}$ are
\[
h_{g,c}(\iota_g)=0,\quad
h_{g,c}(t_{c_i})=-\frac{1}{4g^2-1}\ \text{ for }i=1\cdots,2g-1,\quad\text{and }
h_{g,c}(t_{c_{2g+1}})=-\frac{g}{2g+1}.
\]

\item Let $g\ge1$, $0\le h\le g$, and $c$ a separating simple closed curve of type $\II_h$ in Figure \ref{fig:vanishing cycles}.
When $1\le h\le g-1$, the values of the homomorphism $h_{g,c}:\mathcal{H}_g(c^{\ori})\to\mathbb{Q}$ are
\begin{align*}
h_{g,c}(t_{c_i})&=\frac{g+1}{2g+1}-\frac{h+1}{2h+1} \text{\quad for }i=1,\cdots,2h,\\
h_{g,c}(t_{c_i})&=\frac{g+1}{2g+1}-\frac{g-h+1}{2(g-h)+1} \text{\quad for }i=2h+2,\cdots,2g.
\end{align*}
When $h=0,g$, the homomorphism $h_{g,c}$ is the zero map.
\end{enumerate}

\end{proposition}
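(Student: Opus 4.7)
The plan is to use the homomorphism property of $h_{g,c}$ established in Lemma \ref{lem:homo-h}, which reduces the computation to evaluating $h_{g,c}$ on the generators supplied by Proposition \ref{prop:generating set}. Two cases fall out for free. First, the hyperelliptic involution $\iota_g$ has order two in $\mathcal{H}_g(c)$ while $\mathbb{Q}$ is torsion-free, forcing $h_{g,c}(\iota_g)=0$. Second, when $h=0$ or $h=g$ in the separating case the curve $c$ bounds a disk, so one side of the round handle is trivial and the construction of $h_{g,c}$ from Section \ref{section:local signature} degenerates to the zero map on $\mathcal{H}_g(c^{\ori})$.

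For each remaining Dehn-twist generator $t_{c_i}$ with $c_i$ disjoint from $c$, I would build an explicit hyperelliptic directed broken Lefschetz fibration $f_i:M_i\to S^2$ with a single round indefinite fold whose vanishing cycle is $c$ and a single Lefschetz singular fiber whose vanishing cycle equals $c_i$ viewed inside the low-genus fiber. The mapping-torus monodromy on the high-genus side is then $t_{c_i}\in\mathcal{H}_g(c)$ (respectively $\mathcal{H}_g(c^{\ori})$), so Theorem \ref{thm:main theorem} gives
\[
\Sign M_i = h_{g,c}(t_{c_i}) + \sigma_{\loc}(f_i^{-1}(y)).
\]
To compute $\Sign M_i$ independently, I would exploit that the preimage of the low-genus side of the round circle is a genuine hyperelliptic Lefschetz fibration over a disk of genus $g'\in\{g-1,h,g-h\}$, cap it off by a trivial $\Sigma_{g'}$-bundle and apply Endo's formula. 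Subtracting the two expressions yields an identity of the shape
\[
h_{g,c}(t_{c_i}) = \sigma_{\loc}(c_i\text{ viewed in }\Sigma_{g'}) - \sigma_{\loc}(c_i\text{ viewed in }\Sigma_g),
\]
and the claimed rational numbers $-1/(4g^2-1)$ and $(g+1)/(2g+1)-(h+1)/(2h+1)$ (etc.) follow by routine arithmetic with $\sigma_{\loc}(\I)=-(g+1)/(2g+1)$ and $\sigma_{\loc}(\II_h)=4h(g-h)/(2g+1)-1$.

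The main obstacle will be the generator $t_{c_{2g+1}}$ in part (i): because $c_{2g+1}$ intersects $c$ geometrically once, it cannot be isolated on either side of the round handle by the model-fibration trick above. I would handle it with a separate construction, using a chain relation in $\mathcal{H}_g$ to express the desired monodromy as a product involving $t_{c_{2g+1}}$ together with twists already controlled by the previous step, and then computing the signature of the resulting four-manifold directly, most likely through the double-branched-covering description of hyperelliptic broken Lefschetz fibrations from \cite{hayano2011fah}, which realizes $M$ as a cover of a blown-up $S^2$-bundle whose signature is tractable. The target value $-g/(2g+1)$ should emerge by solving the resulting linear equation for $h_{g,c}(t_{c_{2g+1}})$.
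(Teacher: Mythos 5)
Your reductions for $\iota_g$ (a torsion element mapping into the torsion-free group $\mathbb{Q}$) and for $h=0,g$ (where $\mathcal{H}_g(c^{\ori})=\mathcal{H}_g$ admits no nontrivial $\mathbb{Q}$-valued homomorphism because $H^1(\mathcal{H}_g;\mathbb{Q})=0$) are correct, and the numerical identity you are aiming at for twists disjoint from $c$ is essentially the right one. But the route you propose has a genuine gap. The model fibrations you want to build do not exist: a directed BLF over $S^2$ must have trivial monodromy over $D_l$, whereas a single Lefschetz singularity with vanishing cycle $c_i$ forces the monodromy over $D_l$ to be $\Phi(t_{c_i})=t_{c_i}\neq 1$ in the lower-genus mapping class group. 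Worse, the ``independent'' computation of $\Sign M_i$ begs the question: Endo's theorem applies only to honest Lefschetz fibrations, and the signature contribution of the round-handle region is precisely the quantity $s(\varphi)$ (computed in the paper via Wall's non-additivity, Lemma \ref{lem:signature round-cob}) that the invariant $h_{g,c}$ is built from. The paper sidesteps all of this by evaluating the defining formula $h_{g,c}=s+\phi_g-\Phi^*\phi$ of Lemma \ref{lem:homo-h} directly on each generator, using Endo's values of the Meyer function for the $\phi$-terms and Lemma \ref{lem:signature round-cob} for $s$; no auxiliary fibration is needed.

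Your treatment of $t_{c_{2g+1}}$ also misreads the geometry: $c_{2g+1}$ cannot meet $c$ once, since $t_{c_{2g+1}}\in\mathcal{H}_g(c)$ forces the geometric intersection number of $c_{2g+1}$ and $c$ to vanish; in fact $\Phi_n(t_{c_{2g+1}})=1$. The essential input your proposal never produces is $s(t_{c_{2g+1}})=\sign m(t_{c_{2g+1}})=-1$, which combined with $\phi_g(t_{c_{2g+1}})=(g+1)/(2g+1)$ gives $h_{g,c}(t_{c_{2g+1}})=-g/(2g+1)$. The paper then deduces $h_{g,c}(t_{c_i})=-1/(4g^2-1)$ for $i\le 2g-1$ from the conjugacy of these twists in $\mathcal{H}_g(c)$ and the chain relation $(t_{c_1}\cdots t_{c_{2g-1}})^{2g}=t_{c_{2g+1}}^2$ --- the opposite direction from your plan. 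Without an argument yielding the correction $s(t_{c_{2g+1}})=-1$, your method cannot arrive at $-g/(2g+1)$.
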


In Section \ref{section:example},
we will give examples of calculations of the signatures of simplified broken Lefschetz fibrations,
and determine their homeomorphism types.

\section{Preliminary}

\subsection{Broken Lefschetz fibrations}\label{section:BLF}

\begin{definition}\label{definition:BLF}
Let $M$ and $\Sigma$ be compact oriented smooth manifolds of dimension 4 and 2, respectively.
A smooth map $f:M\to\Sigma$ is called a {\it broken Lefschetz fibration} ({\it BLF}, for short)
if it satisfies the following conditions:
\begin{enumerate}
\item $f^{-1}(\partial\Sigma)=\partial M$

\item $f$ has at most the two types of singularities which is locally written as follows:
\begin{itemize}
\item $(z_1,z_2)\mapsto \xi=z_1z_2$,
where $(z_1,z_2)$ (resp. $\xi$) is a complex local coordinate of $M$ (resp. $\Sigma$) compatible with its orientation;

\item $(t,x_1,x_2,x_3)\mapsto (y_1,y_2)=(t,x_1^2+x_2^2-x_3^2)$,
where $(t,x_1,x_2,x_3)$ (resp. $(y_1,y_2)$) is a real coordinate of $M$ (resp. $\Sigma$).
\end{itemize}
\end{enumerate}
\end{definition}

The first singularity in the condition (ii) of Definition \ref{definition:BLF} is called a {\it Lefschetz singularity} and the second one is called an {\it indefinite fold singularity}.
We denote by $\mathcal{C}_f$ the set of Lefschetz singularities of $f$ and by $Z_f$ the set of indefinite fold singularities of $f$.
We remark that a Lefschetz fibration is a BLF which has no indefinite fold singularities.

Let $f:M\rightarrow S^2$ be a BLF over the $2$-sphere. 
Suppose that the restriction of $f$ to the set of singularities is injective and that  
the image $f(Z_f)$ is the disjoint union of embedded circles parallel to the equator of $S^2$. 
We put $f(Z_f)=Z_1\amalg \cdots \amalg Z_m$, where $Z_i$ is the embedded circle in $S^2$. 
We choose a path $\alpha:[0,1]\rightarrow S^2$ satisfying the following properties: 
\begin{enumerate}

\item $\Im{\alpha}$ is contained in the complement of $f(\mathcal{C}_f)$; 

\item $\alpha$ starts at the north pole $p_h\in S^2$, and ends at the south pole $p_l\in S^2$; 

\item $\alpha$ intersects each component of $f(Z_f)$ at one point transversely. 

\end{enumerate}

\noindent
We put $\{q_i\}=Z_i\cap \Im{\alpha}$ and $\alpha(t_i)=q_i$. 
We assume that $q_1,\ldots,q_m$ appear in this order when we go along $\alpha$ from $p_h$ to $p_l$ (see Figure \ref{fig:directedBLF}). 
\begin{figure}[htbp]
\begin{center}
\includegraphics[width=50mm]{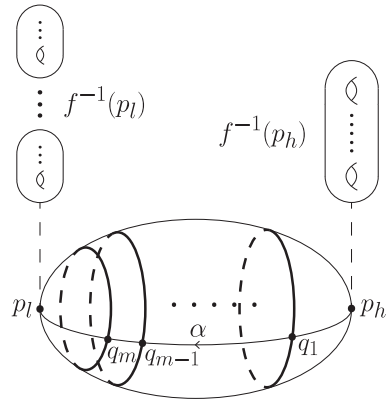}
\end{center}
\caption{The example of the path $\alpha$. 
The bold circles describe $f(Z_f)$. }
\label{fig:directedBLF}
\end{figure}
The preimage $f^{-1}(\Im{\alpha})$ is a $3$-manifold which is a cobordism between $f^{-1}(p_h)$ and $f^{-1}(p_l)$. 
By the local coordinate description of the indefinite fold singularity, 
it is easy to see that $f^{-1}(\alpha([0,t_i+\epsilon]))$ is obtained from 
$f^{-1}(\alpha([0,t_i-\epsilon]))$ by either $1$ or $2$-handle attachment for each $i=1,\ldots,m$. 
In particular, we obtain a handle decomposition of the cobordism $f^{-1}(\Im{\alpha})$. 

\begin{definition}\label{definition:DBLF}

A BLF $f:M\to S^2$ is said to be {\it directed} if it satisfies the following conditions: 
\begin{enumerate}

\item the restriction of $f$ to the set of singularities is injective and   
the image $f(Z_f)$ is the disjoint union of embedded circles parallel to the equator of $S^2$; 

\item all the handles in the above handle decomposition of $f^{-1}(\Im{\alpha})$ is index-$2$;  

\item all Lefschetz singularities of $f$ are in the preimage of the component of $S^2\setminus (Z_1\amalg\cdots\amalg Z_m)$ 
which contains the point $p_h$. 

\end{enumerate}
\end{definition}

We put $\{r_i\}=\partial_0A_i\cap \Im{\alpha}$ for $i=1,\cdots,m$ and $\{r_{m+1}\}=\partial_1A_m\cap \Im{\alpha}$.
Using the path $\alpha$, we can identify $f^{-1}(\alpha(t_i-\epsilon))$ with $f^{-1}(r_i)$. We call the attaching circle in $f^{-1}(r_i)$ of the 2-handle in Definition \ref{definition:DBLF} the {\it vanishing cycle} of $Z_i$.
A BLF is called {\it simplified} if it is directed, $m=1$, and the vanishing cycle of $Z_1$ is non-separating.

\subsection{Monodromy representations and vanishing cycles of Lefschetz singularities}\label{section:monodromy}

Let $M\to \Sigma$ be an oriented surface bundle over a 2-manifold $\Sigma$.
For a base point $y_0\in \Sigma$, we denote by $\varrho:\pi_1(\Sigma,y_0)\to \mathcal{M}_g$ the monodromy representation. 
Let $f:M\to D^2$ be a Lefschetz fibration over a disk, and let $\mathcal{C}_f=\{z_1,\cdots,z_n\}$ denote the set of Lefschetz singularities of $f$.

For each $i=1,\cdots,n$, put $y_i=f(z_i)$, and take an embedded path $\alpha_i:[0,1]\to D^2$ satisfying
\begin{itemize}
\item each $\alpha_i$ connects $y_0$ to $y_i$,
\item $\alpha_i\cap f(\mathcal{C}_f)=\{y_i\}$,
\item $\alpha_i\cap\alpha_j=\{y_0\}$ for all $i\ne j$,
\item $\alpha_1,\cdots,\alpha_n$ appear in this order when we travel counterclockwise around $y_0$.
\end{itemize}
For each $i=1,\cdots,n$,
we denote by $a_i\in\pi_1(D^2\setminus f(\mathcal{C}_f),y_0)$ the element represented by the loop obtained by connecting a counterclockwise circle around $y_i$ to $y_0$ by using $\alpha_i$.
The sequence $W_f=(\varrho_f(a_1),\cdots,\varrho_f(a_n))\in (\mathcal{M}_g)^n$ is called {\it a Hurwitz system} of $f$.
By the conditions on paths $a_1,\cdots,a_n$,
the product $\varrho_f(a_1)\cdots\varrho_f(a_n)$ is equal to the monodromy along the boundary $\partial D^2$.
It is known that each $\varrho_f(a_i)$ is the right-handed Dehn twist along a certain simple closed curve $c_i$,
called the {\it vanishing cycle} of the Lefschetz singularity $z_i$ (see \cite{kas1980hda} or \cite{matsumoto1996lfg}). 

\subsection{The hyperelliptic mapping class group and hyperelliptic directed BLFs}\label{section:hyp-MCG}

Endow the relative topology with the centralizer $C(\iota_g)$ of $\iota_g$ in the diffeomorphism group $\Diff_+\Sigma_g$.
The inclusion homomorphism $C(\iota_g)\to\Diff_+\Sigma_g$ induces a natural homomorphism $\pi_0C(\iota_g)\to\mathcal{M}_g$ between their path-connected components.
We denote this group $\pi_0C(\iota_g)$ by $\mathcal{H}_g^s$.
Birman and Hilden showed:

\begin{theorem}[Birman-Hilden {\cite[Corollary 7.1]{birman69mcc}}]\label{thm:hypMCG}
When $g\ge2$, the homomorphism $\mathcal{H}_g^s\to \mathcal{M}_g$ is injective. 
\end{theorem}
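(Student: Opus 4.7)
The plan is to study the homomorphism $\mathcal{H}_g^s\to\mathcal{M}_g$ via the branched double covering $\pi\colon\Sigma_g\to S^2$ arising from the quotient $\Sigma_g/\langle\iota_g\rangle$, whose branch set $B\subset S^2$ consists of the $2g+2$ images of the Weierstrass points (the fixed points of $\iota_g$). Every diffeomorphism in $C(\iota_g)$ descends to a diffeomorphism of the pair $(S^2,B)$, producing a homomorphism $\pi_*\colon\mathcal{H}_g^s\to\mathcal{M}(S^2,B)$ into the mapping class group of the sphere with $2g+2$ marked points. I would prove injectivity of $\mathcal{H}_g^s\to\mathcal{M}_g$ by studying $\pi_*$ from both sides, i.e., computing its kernel and showing that a symmetric $T$ isotopic to $\id$ in $\Diff_+\Sigma_g$ lies in that kernel.

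First I would show that $\ker\pi_*\subseteq\langle[\iota_g]\rangle$. If $T\in C(\iota_g)$ descends to $\bar T$ isotopic to $\id$ in $\Diff_+(S^2,B)$, then a realizing isotopy lifts uniquely over $S^2\setminus B$ via the unbranched double covering $\Sigma_g\setminus\pi^{-1}(B)\to S^2\setminus B$; near each branch point, the standard local model of $\iota_g$ as a $180^\circ$ rotation in a disk lets the lifted isotopy extend smoothly across the Weierstrass points. Because the two sheets of $\pi$ are interchanged by $\iota_g$, the terminal lift must be either $\id$ or $\iota_g$, so $[T]\in\langle[\iota_g]\rangle$ in $\mathcal{H}_g^s$.

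Second, I would show that any $T\in C(\iota_g)$ isotopic to $\id$ in $\Diff_+\Sigma_g$ (possibly non-symmetrically) satisfies $\pi_*[T]=1$. By the Alexander method applied to $(S^2,B)$, it suffices to check that $\bar T$ fixes the isotopy classes of a filling family of embedded arcs with endpoints in $B$. Each such arc lifts under $\pi$ to a $\iota_g$-invariant simple closed curve in $\Sigma_g$; since $T$ is isotopic to $\id$ on $\Sigma_g$, it preserves the isotopy class of every such lifted curve, and pushing the realizing isotopies down through $\pi$ gives the required invariance of the arc classes downstairs.

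Combining the two steps, $[T]\in\langle[\iota_g]\rangle$ in $\mathcal{H}_g^s$. Since $\iota_g$ acts as $-\id$ on $H_1(\Sigma_g;\mathbb{Z})$, it is not isotopic to the identity in $\Diff_+\Sigma_g$ for $g\ge 1$, so the assumption that $T$ itself is isotopic to $\id$ forces $[T]=1$ in $\mathcal{H}_g^s$, proving injectivity. The main obstacle is the smooth lifting of isotopies across the branch points in the first step, which is the technical heart of Birman--Hilden theory; it requires a careful local model at each Weierstrass point combined with an isotopy extension argument in the complement of the branch locus, and the hypothesis $g\ge 2$ enters to guarantee that the Alexander method applies and that $\iota_g$ is central in $\mathcal{M}_g$, validating the descent of symmetric data.
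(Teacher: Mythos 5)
The paper does not prove this statement---it is quoted directly from Birman--Hilden---so I can only assess your sketch on its own terms. Your two-step architecture, namely (1) $\ker\mathcal{P}_g\subseteq\langle[\iota_g]\rangle$ by lifting isotopies of $(S^2,B)$ through the unbranched double cover and extending over the Weierstrass points, and (2) a symmetric $T$ isotopic to $\id$ in $\Diff_+\Sigma_g$ descends to $\bar T$ isotopic to $\id$ in $\Diff_+(S^2,B)$, is the correct and standard one (essentially the modern Farb--Margalit treatment of the Birman--Hilden theorem). Step (1) and the concluding homology argument ruling out $[T]=[\iota_g]$ are fine.

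The gap is in Step (2), at the words ``pushing the realizing isotopies down through $\pi$''. The isotopy in $\Sigma_g$ carrying $T(\tilde{\gamma})$ to $\tilde{\gamma}$ is an arbitrary ambient isotopy and has no reason to commute with $\iota_g$, so it does not descend to $S^2$. What you actually need is the lemma: two $\iota_g$-symmetric simple closed curves that are isotopic in $\Sigma_g$ have isotopic projections in $(S^2,B)$ (equivalently, are symmetrically isotopic). That lemma is the technical heart of the Birman--Hilden theorem---not, as you claim, the local lifting of isotopies across branch points, which is routine covering-space theory. That your step is not formal is shown already by $g=1$: a half-period translation of the torus lies in $C(\iota_1)$ and is isotopic to the identity, yet it permutes the four branch points, so its descent does not even fix the endpoints of the arcs, and injectivity genuinely fails. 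For $g\ge2$ the needed lemma is proved, e.g., by comparing geometric intersection numbers upstairs and downstairs (the lift of a pair of arcs with disjoint endpoints doubles the intersection number, while arcs sharing one endpoint lift to curves meeting in an odd number of points) together with the change-of-coordinates principle, or by Birman--Hilden's original route through Hamstrom's theorem that the identity component of the diffeomorphism group of $\Sigma_g$ is simply connected for $g\ge2$. A smaller point: the Alexander method requires more than ``$\bar T$ fixes each arc up to isotopy''---one must also control the induced permutation of the complementary disks and the orientations of the arcs, and note that an arc joining two distinct branch points has connected, $\iota_g$-invariant preimage---but that is bookkeeping once the main lemma is in place.
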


The image of the above homomorphism is called the {\it hyperelliptic mapping class group},
and denoted by $\mathcal{H}_g$. 
Actually, they showed the above result for more general settings,
but we only use the case for the involution $\iota_g:\Sigma_g\to\Sigma_g$.
See \cite{birman1973ihr}, for more details. 

Let $f:M\to S^2$ be a directed BLF.
For $i=1,\cdot,m$,
let $d_i\subset f^{-1}(r_i)$ denote the vanishing cycle of $Z_i$.
Fix an identification $f^{-1}(p_h)$ with $\Sigma_{n_1}\amalg\cdots\amalg \Sigma_{n_k}$ for some integers $n_1, \cdots, n_k$.
Then, we can define an involution of $f^{-1}(p_h)$ by $\iota_{n_1}\amalg\cdots\amalg \iota_{n_k}$.
By using the path $\alpha$, we can identify $f^{-1}(p_h)$ with $f^{-1}(r_1)$ and $f^{-1}(r_i)\setminus \{\text{two points}\}$ with $f^{-1}(r_{i+1})\setminus d_i$.
Hence, we also obtain an involution of $f^{-1}(r_i)$ by the hyperelliptic involution of $f^{-1}(p_h)$ for $i=1,\cdots,m$.

\begin{definition}\label{def:hyperelliptic}
A directed BLF $f:M\to S^2$ is said to be {\it hyperelliptic} if it satisfies the following conditions for a suitable identification of $f^{-1}(p_h)$ with $\Sigma_{n_1}\amalg\cdots\amalg \Sigma_{n_k}$:
\begin{itemize}
\item the image of the monodromy representation of the Lefschetz fibration $\res f:f^{-1}(D_h)\to D_h$ is contained in the group $\mathcal{H}_g$,

\item $d_i$ is preserved by the involution up to isotopy.
\end{itemize}
\end{definition}

In the following,
we review some properties of the hyperelliptic mapping class group. 
Let $X$ be a 2-disk or a 2-sphere.
For a positive integer $n$ and distinct points $\{p_i\}_{i=1}^n$ in $\Int X$,
Denote by $\Diff_+(X,\partial X, \{p_1,p_2,\cdots,p_n\})$ the group defined by
\begin{align*}
&\Diff_+(X,\partial X,  \{p_1,p_2,\cdots,p_n\})\\
&=\{T\in\Diff_+X\,|\,T|_{\partial X}\text{ is the identity map, and } T(\{p_1,p_2,\cdots,p_n\})=\{p_1,p_2,\cdots,p_n\}\}.
\end{align*}
Denote by $\mathcal{M}_0^n$ or $\mathcal{M}_{0,1}^n$ its mapping class group when $X=S^2$ or $X=D^2$, respectively.
Let $D_i$ be a disk in $\Int X$ which includes $p_i$ and $p_{i+1}$ but is disjoint from all $p_j$ for $j\ne i,i+1$,
and denote by $\nu(\partial D_i)$ a neighborhood of the boundary $\partial D_i$ in $D_i$. 
Choose a diffeomorphism $T_i\in \Diff_+(X, \partial X, \{p_1,p_2,\cdots,p_n\})$ such that
$T_i|_{D_i}$ interchanges the points $p_i$ and $p_{i+1}$,
$T_i|_{X-\Int D_i}$ is the identity map, 
and $T_i^2$ is isotopic to the Dehn twist along $\partial D_i$ (see Birman-Hilden p.87-88 for details).
The mapping class group $\mathcal{M}_0^n$ and $\mathcal{M}_{0,1}^n$ is generated by $\{\sigma_i\}_{i=1}^{n-1}$,
where $\sigma_i$ is the mapping class represented by the diffeomorphism $T_i$. 

Identifying the quotient space $\Sigma_g/\braket{\iota_g}$ with $S^2$,
let $\{p_1,p_2,\cdots,p_{2g+1},p_{2g+2}\}\subset S^2$ be the branched set of the quotient map $\Sigma_g\to \Sigma_g/\braket{\iota_g}$. 
By the definition, any diffeomorphism $T$ in $C(\iota_g)$ satisfies $T\iota_g(x)=\iota_g T(x)$ for $x\in\Sigma_g$.
Hence, there exists a unique diffeomorphism $\bar{T}\in \Diff_+S^2$ such that the diagram
\[
\begin{CD}
\Sigma_g@>T>> \Sigma_g\\
@VpVV@VVpV\\
S^2@>\bar{T}>> S^2
\end{CD}
\]
commutes.
Moreover, it satisfies $\bar{T}(\{p_1,p_2,\cdots,p_{2g+2}\})=\{p_1,p_2,\cdots,p_{2g+2}\}\subset S^2$.

By the above diagram, we can define
\[
\mathcal{P}_g:\mathcal{H}_g^s\to \mathcal{M}_0^{2g+2}
\]
by $\mathcal{P}_g([T])=[\bar{T}]$. 

\begin{theorem}[Birman-Hilden {\cite[Theorem 1]{birman69mcc}}]\label{thm:projection}
Let $g\ge1$. the sequence
\[
\begin{CD}
1@>>>\braket{\iota_g}@>>>\mathcal{H}_g^s @>\mathcal{P}_g>>\mathcal{M}_0^{2g+2}@>>>1
\end{CD}
\]
is exact.
\end{theorem}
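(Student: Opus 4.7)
The plan is to establish three facts: the homomorphism $\mathcal{P}_g$ is surjective, $\Ker\mathcal{P}_g=\braket{\iota_g}$, and the inclusion $\braket{\iota_g}\hookrightarrow\mathcal{H}_g^s$ is injective. The principal tool throughout is lifting of diffeomorphisms and isotopies of the pair $(S^2,\{p_1,\ldots,p_{2g+2}\})$ through the branched double cover $p:\Sigma_g\to S^2$.

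For surjectivity of $\mathcal{P}_g$, given $\bar{T}\in\Diff_+(S^2,\partial S^2,\{p_1,\ldots,p_{2g+2}\})$, I would restrict $\bar T$ to $U:=S^2\setminus\{p_1,\ldots,p_{2g+2}\}$ and use that the unramified double cover $\Sigma_g\setminus p^{-1}(\{p_i\})\to U$ corresponds to the kernel of the unique epimorphism $\pi_1(U)\to\mathbb{Z}/2$ sending each meridian around a branch point to the nontrivial element. Because $\bar T$ permutes the branch points, the induced automorphism of $\pi_1(U)$ preserves this kernel, so by standard covering space theory $\bar T|_U$ admits a lift to $\Sigma_g\setminus p^{-1}(\{p_i\})$, and this lift extends smoothly across the branch points (where $p$ has the local model $z\mapsto z^2$) to a diffeomorphism $T\in\Diff_+\Sigma_g$. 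Then $T\iota_g T^{-1}$ is a deck transformation of $p$ and hence equals $\id$ or $\iota_g$; the former is impossible since $\iota_g\neq\id$, so $T\in C(\iota_g)$ and $\mathcal{P}_g([T])=[\bar T]$.

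For the kernel computation, which I expect to be the main obstacle, suppose $T\in C(\iota_g)$ with $\bar T$ isotopic to $\id_{S^2}$ through branch-set preserving diffeomorphisms via an isotopy $\{\bar F_t\}_{t\in[0,1]}$ with $\bar F_0=\id_{S^2}$ and $\bar F_1=\bar T$. I would lift $\{\bar F_t\}$ to a path $\{F_t\}_{t\in[0,1]}$ in $C(\iota_g)$ with $F_1=T$ by applying the covering-space lifting argument of the previous paragraph continuously in $t$; the delicate point is ensuring smoothness of the lifted isotopy across the branch points, which is precisely the content of the Birman-Hilden lifting theorem for isotopies. The two possible values for $F_0$ are the two deck transformations $\id$ and $\iota_g$, so $T$ is isotopic in $C(\iota_g)$ to one of them, and hence $[T]\in\braket{\iota_g}$. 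The reverse inclusion $\braket{\iota_g}\subset\Ker\mathcal{P}_g$ is clear since $\iota_g$ covers $\id_{S^2}$.

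Finally, to check injectivity of $\braket{\iota_g}\hookrightarrow\mathcal{H}_g^s$, I would observe that $\iota_g$ acts as $-\id$ on $H_1(\Sigma_g;\mathbb{Z})$, so it cannot be isotopic to the identity in $\Diff_+\Sigma_g$ and hence not in $C(\iota_g)$ either; therefore $[\iota_g]\neq 1$ in $\mathcal{H}_g^s$, which completes the exact sequence.
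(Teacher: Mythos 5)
The paper offers no proof of this statement: it is quoted verbatim from Birman--Hilden \cite[Theorem 1]{birman69mcc}, so there is nothing internal to compare against. Your outline is the standard covering-space argument and its overall structure (surjectivity by lifting, kernel equals the deck group by lifting isotopies, injectivity of $\braket{\iota_g}\to\mathcal{H}_g^s$ via the action on $H_1(\Sigma_g;\mathbb{Z})$) is the right one; the characterization of the double cover by the epimorphism $\pi_1(U)\to\mathbb{Z}/2$ sending every meridian to $1$, and the observation that $T\iota_gT^{-1}$ is a nontrivial deck transformation, are both correct.

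There is, however, one genuine gap, in the surjectivity step: the lift of a smooth $\bar T$ through the local model $z\mapsto z^2$ is in general only continuous, not smooth, at a branch point. Writing $\bar T(w)=aw+b\bar w+O(|w|^2)$ near a fixed branch point, the lift near the preimage is $z\mapsto z\sqrt{a+b\bar z^2/z^2+\cdots}$, which fails to be differentiable at $z=0$ unless $b=0$; so "this lift extends smoothly across the branch points" is false as stated. The standard repair is either to first isotope $\bar T$ so that it is standard (a rotation, or the identity) in chosen local coordinates near each marked point, or to verify surjectivity only on generators: the half-twists $\sigma_i$ have representatives supported in disks where they are honest rotations in the $z^2$-model, and these visibly lift to the Dehn twists $t_{c_i}$ --- which is exactly the fact the paper records after Theorem \ref{thm:projection}, and $\mathcal{M}_0^{2g+2}$ is generated by the $\sigma_i$. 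The same smoothness issue infects the kernel step, where you defer to "the Birman--Hilden lifting theorem for isotopies"; since that theorem is essentially the statement being proved, this is circular as written, though the same fix (normalize the isotopy near the marked points, or work with the induced maps on fundamental groups) closes it. With that repair the argument is complete.
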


They showed the homomorphism $\mathcal{P}_g:\mathcal{H}_g^s\to \mathcal{M}_0^{2g+2}$ maps
the Dehn twist $t_{c_i}$ to $\sigma_i$ in \cite[Theorem 2]{birman69mcc}. 
Furthermore, they proved:
\begin{proposition}
Let $g\ge1$. The group $\mathcal{H}_g^s$ is generated by $\{t_{c_1},\cdots,t_{c_{2g+1}}\}$.
\end{proposition}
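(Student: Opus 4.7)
The plan is to use Theorem~\ref{thm:projection} to reduce the generation statement to producing $\iota_g$ as a word in $t_{c_1},\ldots,t_{c_{2g+1}}$. Let $G\le\mathcal{H}_g^s$ be the subgroup they generate. Since Birman--Hilden showed $\mathcal{P}_g(t_{c_i})=\sigma_i$, and since $\{\sigma_1,\ldots,\sigma_{2g+1}\}$ is the standard half-twist generating set of $\mathcal{M}_0^{2g+2}$, the restriction $\mathcal{P}_g|_G$ already surjects onto $\mathcal{M}_0^{2g+2}$. Exactness of Theorem~\ref{thm:projection} then yields $\mathcal{H}_g^s=G\cdot\braket{\iota_g}$, so it suffices to show $\iota_g\in G$.

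For the latter, I would examine the word
\[
w\;=\;t_{c_1}t_{c_2}\cdots t_{c_{2g}}\,t_{c_{2g+1}}^{\,2}\,t_{c_{2g}}\cdots t_{c_2}t_{c_1}\;\in\;G.
\]
Its image under $\mathcal{P}_g$ is $\sigma_1\sigma_2\cdots\sigma_{2g}\sigma_{2g+1}^{\,2}\sigma_{2g}\cdots\sigma_2\sigma_1$, which is the defining sphere relation in the braid group $B_{2g+2}(S^2)$ of the $2$-sphere on $2g+2$ strands. Since this relation is trivial in $B_{2g+2}(S^2)$, it is also trivial in its quotient $\mathcal{M}_0^{2g+2}$, so $w\in\ker\mathcal{P}_g=\braket{\iota_g}$ and therefore $w\in\{\id,\iota_g\}$.

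To decide between the two, I would compute $w_*$ on $H_1(\Sigma_g;\mathbb{Z})$. Because $(\iota_g)_*=-\id$ while $\id$ acts trivially, the candidates are separated on homology. Using the transvection formula $(t_c)_*(x)=x+\braket{x,[c]}\,[c]$ with the symplectic basis formed by the classes $[c_1],[c_3],\ldots,[c_{2g-1}]$ and $[c_2],[c_4],\ldots,[c_{2g}]$, an iterated calculation shows $w_*=-\id$; equivalently, one interprets $w$ geometrically as the lift of the trivial sphere relation through the branched double cover $\Sigma_g\to\Sigma_g/\braket{\iota_g}=S^2$, and this lift is the deck involution $\iota_g$ by construction. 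Hence $w=\iota_g\in G$, and $G=\mathcal{H}_g^s$.

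The main obstacle is the verification $w_*=-\id$ rather than $+\id$: the direct matrix computation is elementary but notation-heavy for general $g$, whereas the branched-cover viewpoint handles all $g\ge 1$ uniformly and is the route I would pursue.
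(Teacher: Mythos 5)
Your argument is correct, but note that the paper does not actually prove this proposition: it is quoted from Birman--Hilden (it appears immediately after the citation of their Theorem 2, which supplies the fact $\mathcal{P}_g(t_{c_i})=\sigma_i$), so there is no in-paper proof to match. What you have written is essentially the standard Birman--Hilden argument reconstructed from the ingredients the paper does provide: the exact sequence $1\to\langle\iota_g\rangle\to\mathcal{H}_g^s\to\mathcal{M}_0^{2g+2}\to 1$, the generation of $\mathcal{M}_0^{2g+2}$ by the half-twists $\sigma_1,\dots,\sigma_{2g+1}$ (stated in Section 2.3), and an explicit expression of $\iota_g$ as a word in the $t_{c_i}$. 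The reduction to showing $\iota_g\in G$ is clean and complete. The only thin spot is the final verification that $w=t_{c_1}\cdots t_{c_{2g}}t_{c_{2g+1}}^2t_{c_{2g}}\cdots t_{c_1}$ equals $\iota_g$ rather than the identity: you assert $w_*=-\mathrm{id}$ without carrying out the computation. The claim is true (the transvection calculation with the chain intersection pattern $\langle[c_i],[c_{i+1}]\rangle=1$ does give $-\mathrm{id}$, and for $g=1$ it reduces to $(t_at_bt_a)^2=-I$ in $\SL(2,\mathbb{Z})$), and since $\iota_g$ acts as $-\mathrm{id}$ on $H_1(\Sigma_g;\mathbb{Z})$ while the identity does not, this does separate the two candidates without needing injectivity of $\mathcal{H}_g^s\to\mathcal{M}_g$ (which the paper only asserts for $g\ge 2$) -- a point worth making explicit since your statement covers $g=1$. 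One cosmetic remark: the classes $[c_1],[c_3],\dots,[c_{2g-1}],[c_2],\dots,[c_{2g}]$ form a basis of $H_1(\Sigma_g;\mathbb{Z})$ on which the intersection form is nondegenerate, but not a symplectic basis in the standard normal form; this does not affect the computation. Alternatively, you could shortcut the whole second half by quoting the chain relation $(t_{c_1}\cdots t_{c_{2g+1}})^{g+1}=\iota_g$, which the paper already invokes in related forms in Section 3.
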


\subsection{Meyer's signature cocycle and the local signature for hyperelliptic Lefschetz fibrations}\label{section:local-signature}

It is known that, for a hyperellitic Lefschetz fibration $f:M\to \Sigma$ over a closed oriented surface $\Sigma$,
the signature $\Sign M$ is described as the sum of invariants of the singular fiber germs in $M$.
We review this invariant.

Let $\varphi,\psi$ be elements in the mapping class group $\mathcal{M}_g$.
We denote by $E_{\varphi,\psi}$ a $\Sigma_g$-bundle over a pair of pants $S^2-\amalg_{i=1}^3 \Int D^2$ whose monodromies along $\alpha$ and $\beta$ in Figure \ref{fig:pants} are $\varphi$ and $\psi$, respectively.
\begin{figure}[htbp]
\begin{center}
\includegraphics[width=26mm]{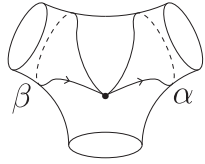}
\end{center}
\caption{paths $\alpha$ and $\beta$}
\label{fig:pants}
\end{figure}

\begin{theorem}[Meyer \cite{meyer1973sf}]
Define a $2$-cochain $\tau_g: \mathcal{M}_g\times \mathcal{M}_g\to \mathbb{Z}$ of the mapping class group by $\tau_g(\varphi,\psi)=-\Sign E_{\varphi,\psi}$. Then, $\tau_g$ is a $2$-cocycle, and the order of its homology class is as follows.
\begin{enumerate}
\item The order of $[\tau_1]\in H^2(\mathcal{M}_1;\mathbb{Z})$ is 3,
\item The order of $[\tau_2]\in H^2(\mathcal{M}_2;\mathbb{Z})$ is 5,
\item When $g\ge3$, $[\tau_g]\ne 0\in H^2(\mathcal{M}_g;\mathbb{Q})$.
\end{enumerate}
\end{theorem}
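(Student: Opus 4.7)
My approach is to first verify the cocycle identity by Novikov additivity, and then treat the three order/non-triviality assertions by separate arguments.

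For the cocycle property, I build, for $\varphi_1,\varphi_2,\varphi_3\in\mathcal{M}_g$, a $\Sigma_g$-bundle $E$ over a $4$-holed sphere $P_4$ whose four boundary monodromies are $\varphi_1,\varphi_2,\varphi_3,(\varphi_1\varphi_2\varphi_3)^{-1}$. The base $P_4$ admits two essentially distinct pants decompositions, one along an arc separating $\{\varphi_1,\varphi_2\}$ from $\{\varphi_3,(\varphi_1\varphi_2\varphi_3)^{-1}\}$ and one separating $\{\varphi_2,\varphi_3\}$ from $\{\varphi_1,(\varphi_1\varphi_2\varphi_3)^{-1}\}$. The corresponding decompositions of $E$ along a $\Sigma_g$-bundle over $S^1$, together with Novikov additivity of the signature, give
\[
\Sign E_{\varphi_1,\varphi_2}+\Sign E_{\varphi_1\varphi_2,\varphi_3}=\Sign E=\Sign E_{\varphi_1,\varphi_2\varphi_3}+\Sign E_{\varphi_2,\varphi_3},
\]
which rewrites as $\delta\tau_g(\varphi_1,\varphi_2,\varphi_3)=0$.

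For (i) and (ii), I evaluate $\tau_g$ on explicit $2$-cycles built from relations in $\mathcal{M}_g$. In genus $1$, $\mathcal{M}_1\cong\SL(2,\mathbb{Z})$ and $H^2(\SL(2,\mathbb{Z});\mathbb{Z})\cong\mathbb{Z}/12$, so it suffices to exhibit a $1$-cochain whose coboundary equals $3\tau_1$ (essentially Rademacher's $\Phi$-function from the theory of the Dedekind $\eta$) and a single elliptic Lefschetz fibration with odd signature to conclude that the order is exactly $3$. In genus $2$, I use the chain relation $(t_{c_1}\cdots t_{c_5})^6=1$ together with the hyperelliptic involution to produce a $2$-cycle on which Meyer's cocycle evaluates to give $5[\tau_2]=0$, and non-triviality follows from an explicit genus-$2$ Lefschetz fibration whose signature is not a multiple of $5$.

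For (iii), I pass to rational coefficients: evaluation of $[\tau_g]\in H^2(\mathcal{M}_g;\mathbb{Q})$ on the image of the fundamental class of a closed oriented surface $\Sigma$ classifying a $\Sigma_g$-bundle $E\to\Sigma$ equals $-\Sign E$. Hence $[\tau_g]=0$ rationally would force every such bundle to have vanishing signature, contradicting the Atiyah--Kodaira construction of surface bundles with non-zero signature for $g\ge 3$.

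The hard part is the precise torsion computation in low genus: the upper bounds on the order require explicit bounding cochains (clean via Rademacher when $g=1$, considerably more intricate when $g=2$), and the matching lower bounds require exhibiting fibrations whose signature realizes the claimed residue modulo $3$ or $5$. By contrast, the $g\ge 3$ case is comparatively soft, needing only a single surface bundle with non-zero signature to detect a non-zero rational class.
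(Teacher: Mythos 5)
This theorem is quoted by the paper from Meyer's article \cite{meyer1973sf} without proof, so there is no in-paper argument to compare against; I can only assess your sketch on its own terms. Your verification of the cocycle identity (two pants decompositions of a four-holed sphere plus Novikov additivity) is the standard and correct argument, and your treatment of (iii) is also sound: over $\mathbb{Q}$ the class $[\tau_g]$ is detected by evaluation on fundamental classes of closed base surfaces, and a single surface bundle with nonzero signature (Atiyah--Kodaira, stabilized to every fiber genus $g\ge 3$) settles it.

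The genuine gaps are in the torsion computations (i) and (ii), where your proposed detection mechanisms cannot work. Since $H_2(\mathcal{M}_1;\mathbb{Z})=0$ and $\operatorname{Hom}(H_2(\mathcal{M}_2;\mathbb{Z}),\mathbb{Z})=0$, a torsion class in $H^2(\mathcal{M}_g;\mathbb{Z})$ for $g=1,2$ evaluates to zero on \emph{every} integral $2$-cycle; so ``a $2$-cycle on which the cocycle evaluates to give $5[\tau_2]=0$'' proves nothing, and neither does exhibiting a fibration over $S^2$ whose signature has a prescribed residue (a fibration over $S^2$ gives a relative chain, not a cycle, and its signature also involves local terms). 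Moreover, no elliptic Lefschetz fibration has odd signature: the local signature $-\tfrac{2}{3}$ per irreducible fiber forces the number of critical points to be divisible by $3$ and the total signature to be even. The correct route, which is Meyer's (and the one this paper implicitly relies on via Endo), is homological algebra: because $H^1(\mathcal{M}_g;\mathbb{Q})=0$ for $g=1,2$, the rational cobounding cochain $\phi_g$ with $\delta\phi_g=\tau_g$ is unique, $[\tau_g]$ is the image of $[\phi_g \bmod \mathbb{Z}]\in H^1(\mathcal{M}_g;\mathbb{Q}/\mathbb{Z})=\operatorname{Hom}(H_1(\mathcal{M}_g;\mathbb{Z}),\mathbb{Q}/\mathbb{Z})$ under the (here injective) Bockstein, and the order of $[\tau_g]$ is therefore the order of $\phi_g(t_c)$ in $\mathbb{Q}/\mathbb{Z}$ for $t_c$ a generator of $H_1(\mathcal{M}_1;\mathbb{Z})=\mathbb{Z}/12$, resp.\ $H_1(\mathcal{M}_2;\mathbb{Z})=\mathbb{Z}/10$. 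With $\phi_1(t_c)=\tfrac{2}{3}$ and $\phi_2(t_c)=\tfrac{3}{5}$ one reads off the orders $3$ and $5$; your Rademacher-function idea computes exactly this value in genus $1$, but you must route the conclusion through $\operatorname{Ext}(H_1,\mathbb{Z})$ rather than through signatures of fibrations.
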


\begin{proposition}[Endo \cite{endo2000mss}]
If we restrict $\tau_g$ to $\mathcal{H}_g$, the order of $[\tau_g]\in H^2(\mathcal{H}_g;\mathbb{Z})$ is $2g+1$.
\end{proposition}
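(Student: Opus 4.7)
The plan is to prove $(2g+1)[\tau_g]=0$ in $H^2(\mathcal{H}_g;\mathbb{Z})$ and separately rule out any smaller positive multiple.

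For the divisibility, I would construct a $\mathbb{Q}$-valued $1$-cochain $\eta\colon\mathcal{H}_g\to\mathbb{Q}$ with coboundary $\delta\eta=-\tau_g|_{\mathcal{H}_g}$ whose values lie in $\tfrac{1}{2g+1}\mathbb{Z}$; then $(2g+1)\eta$ witnesses $(2g+1)\tau_g$ as an integer coboundary. To build $\eta$, I would use the Birman--Hilden presentation (Theorem \ref{thm:projection}): every hyperelliptic Lefschetz fibration $f\colon M\to B$ carries a fiberwise involution extending $\iota_g$, so $M$ is a double branched cover of a $\Sigma_{0,2g+2}$-bundle, and the $G$-signature theorem for this $\mathbb{Z}/2$-action expresses $\Sign M$ as a sum of local contributions concentrated at the Lefschetz critical fibers. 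The resulting contributions are precisely the numbers $\sigma_{\loc}(\I)=-\tfrac{g+1}{2g+1}$ and $\sigma_{\loc}(\II_h)=\tfrac{4h(g-h)}{2g+1}-1$ quoted in the introduction. Declaring $\eta$ to be the homomorphism sending each standard Dehn twist generator $t_{c_i}$ of $\mathcal{H}_g$ to the appropriate local value and verifying it is well-defined, the global signature identity $\Sign M=\sum_j\sigma_{\loc}(f^{-1}(y_j))$ becomes the cocycle equality $\delta\eta=-\tau_g$.

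For the sharpness, it is enough to display one hyperelliptic Lefschetz fibration whose total signature contribution lies in $\tfrac{1}{2g+1}\mathbb{Z}$ with denominator exactly $2g+1$. A canonical choice is the closed Lefschetz fibration over $S^2$ coming from a standard hyperelliptic factorization of the identity with all vanishing cycles of type $\I$; in genus $g=1$ this is the rational elliptic surface with $\Sign=-8$ and twelve type $\I$ singular fibers, forcing $[\tau_1]$ to have order exactly $3$, and in higher genus the analogous construction produces a signature whose localization contributes a rational with denominator exactly $2g+1$. Consequently no proper divisor of $2g+1$ can annihilate $[\tau_g]$.

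The main technical obstacle is the $G$-signature computation at type $\II_h$ fibers. A separating vanishing cycle is $\iota_g$-invariant but cuts the fiber into two invariant subsurfaces of genera $h$ and $g-h$, and in the double-cover model the branch surface develops a node lying on the hyperelliptic fixed set; resolving this node carefully is what produces the nontrivial correction $\tfrac{4h(g-h)}{2g+1}-1$ rather than the cleaner type $\I$ value. Once this local invariant is pinned down, checking that $\eta$ is well-defined on all of $\mathcal{H}_g$—so that the cocycle identity extends from generating Dehn twists to the whole group—is bookkeeping using additivity of signature under fiber sum and invariance of $\sigma_{\loc}$ under Hurwitz moves.
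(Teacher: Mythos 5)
This proposition is quoted from Endo and not reproved in the paper; Endo's argument rests on the existence and uniqueness of the rational cobounding function $\phi_g$ (which use $H^2(\mathcal{H}_g;\mathbb{Q})=0$ and $H^1(\mathcal{H}_g;\mathbb{Q})=0$, both coming from the Birman--Hilden sequence and the vanishing of the rational (co)homology of $\mathcal{M}_0^{2g+2}$ in degrees $1$ and $2$) together with the explicit values $\phi_g(t_{c})$. Your sketch identifies the right numerical source of the answer, namely the denominators $2g+1$, but two of its steps do not work as written. First, the cochain $\eta$ with $\delta\eta=-\tau_g$ cannot be ``the homomorphism sending each $t_{c_i}$ to the appropriate local value'': a homomorphism into $\mathbb{Q}$ has identically vanishing coboundary, whereas $\tau_g|_{\mathcal{H}_g}$ is not the zero cocycle. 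What exists is a genuine $1$-cochain (a quasi-morphism) $\phi_g$ cobounding $\tau_g$, and its existence is exactly the statement $[\tau_g]=0\in H^2(\mathcal{H}_g;\mathbb{Q})$, which your $G$-signature discussion does not supply. Granting that, the divisibility $(2g+1)[\tau_g]=0$ does follow from $\phi_g(t_{c_i})\in\frac{1}{2g+1}\mathbb{Z}$, because the coboundary relation $\phi_g(\varphi\psi)=\phi_g(\varphi)+\phi_g(\psi)\pm\tau_g(\varphi,\psi)$ with $\tau_g$ integer-valued propagates $\frac{1}{2g+1}\mathbb{Z}$-valuedness from the generating Dehn twists to all of $\mathcal{H}_g$, so $(2g+1)\phi_g$ is an integral cochain cobounding $(2g+1)\tau_g$.

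Second, your sharpness argument fails: the total signature of a \emph{closed} hyperelliptic Lefschetz fibration is an integer, so no closed example can exhibit ``a denominator exactly $2g+1$''; the computation $12\cdot(-\tfrac{2}{3})=-8$ for $E(1)$ is equally consistent with $[\tau_1]$ being trivial. The order is detected on a single group element, not on a global fibration: if $k\tau_g=\delta\mu$ for an integral $1$-cochain $\mu$, then $\mu \mp k\phi_g$ is a rational $1$-cocycle, i.e.\ a homomorphism $\mathcal{H}_g\to\mathbb{Q}$, hence zero since $H_1(\mathcal{H}_g;\mathbb{Q})=0$; therefore $k\phi_g(t_{c_1})=k\,\frac{g+1}{2g+1}$ must be an integer, and $\gcd(g+1,2g+1)=1$ forces $(2g+1)\mid k$. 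Without this uniqueness step the exhibited examples prove nothing about a lower bound on the order.
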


Since $\tau_g$ represents a trivial homology class in $H^2(\mathcal{H}_g;\mathbb{Q})$,
there exists a cobounding function $\phi_g:\mathcal{H}_g\to\mathbb{Q}$ of it.
Furthermore, since $H_1(\mathcal{H}_g;\mathbb{Q})$ is trivial,
this cobounding function $\phi_g$ is unique.

\begin{lemma}[Endo {\cite[Proof of Theorem 4.4]{endo2000mss}}]\label{lem:meyer}
Let $f:M\to \Sigma$ be a $\Sigma_g$-bundle over a compact oriented surface $\Sigma$. Assume that the image of the monodromy representation $\pi_1(\Sigma,y_0)\to\mathcal{M}_g$ is in $\mathcal{H}_g$ if we choose a suitable identification $f^{-1}(y_0)\cong \Sigma_g$.
Let $\{\partial_j\}_{j=1}^n$ denote the boundary components of $\Sigma$, and give orientations coming from $\Sigma$.
Then, we have
\[
\Sign M= -\sum_{j=1}^n \phi(\psi_j),
\]
where $\psi_j\in\mathcal{H}_g$ is the monodromy along $\partial_j$.
\end{lemma}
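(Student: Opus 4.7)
The plan is to follow Meyer's original argument via a pants decomposition of $\Sigma$, telescoping the contributions using the cobounding identity $\tau_g = \delta\phi_g$ on $\mathcal{H}_g$.

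First I would record two formal consequences of $\tau_g = \delta\phi_g$ that do all of the cancellation work in the sequel. Evaluating the cocycle identity at $(\id,\id)$ gives $\phi_g(\id) = \tau_g(\id,\id) = -\Sign(\Sigma_g\times P) = 0$, where $P$ denotes the pair of pants. Evaluating at $(\psi,\psi^{-1})$ gives $\phi_g(\psi) + \phi_g(\psi^{-1}) = \tau_g(\psi,\psi^{-1})$; but $E_{\psi,\psi^{-1}}$ has a boundary circle with trivial monodromy, so it decomposes along that circle as the union of a bundle over an annulus and a trivial bundle $\Sigma_g\times D^2$, both of signature zero. By Novikov additivity $\Sign E_{\psi,\psi^{-1}} = 0$, yielding the antisymmetry $\phi_g(\psi^{-1}) = -\phi_g(\psi)$.

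Assuming $\chi(\Sigma) < 0$, I would cut $\Sigma$ along disjoint essential simple closed curves into pairs of pants $P_1,\ldots,P_N$. By Novikov additivity across the fibered three-manifolds over the cut curves, $\Sign M = \sum_{k=1}^N \Sign f^{-1}(P_k)$. Choosing a basepoint and standard generating loops inside $P_k$ identifies $f^{-1}(P_k) \cong E_{\varphi_k,\psi_k}$ for some $(\varphi_k,\psi_k)\in\mathcal{H}_g\times\mathcal{H}_g$, whose three boundary monodromies (taken in the direction induced by the orientation of $P_k$) are $\varphi_k,\psi_k,(\varphi_k\psi_k)^{-1}$. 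By definition and the coboundary relation,
\[
\Sign f^{-1}(P_k) = -\tau_g(\varphi_k,\psi_k) = \phi_g(\varphi_k\psi_k) - \phi_g(\varphi_k) - \phi_g(\psi_k) = -\phi_g(\varphi_k) - \phi_g(\psi_k) - \phi_g((\varphi_k\psi_k)^{-1}),
\]
which is minus the sum of $\phi_g$ applied to the three boundary monodromies of $P_k$.

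Regrouping $\sum_k \Sign f^{-1}(P_k)$ by boundary circles rather than by pants, each internal cut lies on two pants with opposite induced orientations, so its two local monodromies are mutually inverse and contribute $-\phi_g(\psi) - \phi_g(\psi^{-1}) = 0$. Each boundary component $\partial_j$ of $\Sigma$ lies on exactly one pant and contributes $-\phi_g(\psi_j)$, yielding the claimed formula. The remaining cases, where $\Sigma$ is a disk, an annulus, or a closed surface, are checked directly from the two normalizations of $\phi_g$ above. The main obstacle is the orientation bookkeeping: confirming that at each internal cut the two local monodromies are genuinely inverses once the boundary orientations induced from the two adjacent pants are matched. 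Once that convention is pinned down, the telescoping is formal.
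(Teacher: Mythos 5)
Your argument is correct and is essentially the standard Meyer--Endo argument that the paper invokes: the paper itself gives no proof of this lemma, citing it directly from Endo's proof of Theorem 4.4, and that proof is exactly the pants-decomposition/Novikov-additivity telescoping you describe, using $\tau_g=\delta\phi_g$ on $\mathcal{H}_g$ together with $\phi_g(\id)=0$, $\phi_g(\psi^{-1})=-\phi_g(\psi)$, and the conjugation-invariance of $\phi_g$ (which follows from the uniqueness of the cobounding function) to kill the internal cut circles.
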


Using this function,
he generalized the local signature of Lefschetz fibrations of genus $1$ \cite{matsumoto1983mft} and of genus $2$ \cite{matsumoto1996lfg} constructed by Matsumoto.
Let $f:M\to \Sigma$ be a hyperelliptic Lefschetz fibration of genus $g$ over a closed oriented surface $\Sigma$,
and $y_1,\cdots, y_n$ the image of the set of Lefschetz singularities under $f$.
For the Lefschetz singular fiber $f^{-1}(y_j)$, define a rational number $\sigma_{\loc}(f^{-1}(y_j))$ by
\[
\sigma_{\loc}(f^{-1}(y_j))=-\phi_g(\varphi_j)+\Sign(f^{-1}\nu(y_j)),
\]
where $\varphi_j\in \mathcal{H}_g$ is the monodromy along $\partial\nu(y_j)$.
He computed the values for Lefschetz singular fibers as in Introduction, and showed:
\begin{theorem}[Endo {\cite[Theorem 4.4]{endo2000mss}}]\label{thm:local-sig}
Let $f:M\to \Sigma$ be a hyperelliptic Lefschetz fibration as above. Then, we have
\[
\Sign M=\sum_{i=1}^n\sigma_{\loc}(f^{-1}(y_j)).
\]
\end{theorem}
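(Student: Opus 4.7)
The plan is to decompose $M$ into small disk-neighborhoods of the critical values and a genuine $\Sigma_g$-bundle complement, then apply Novikov additivity together with Meyer's formula (Lemma \ref{lem:meyer}). For each $j$, pick a small open disk $\nu(y_j) \subset \Sigma$ around $y_j$, and set $\Sigma' = \Sigma \setminus \amalg_j \nu(y_j)$, $M' = f^{-1}(\Sigma')$, and $N_j = f^{-1}(\overline{\nu(y_j)})$, so that $M = M' \cup \amalg_j N_j$ is glued along the $n$ closed mapping tori $\partial N_j$.

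First, Novikov additivity of the signature across these $3$-manifold boundaries gives
\[
\Sign M = \Sign M' + \sum_{j=1}^n \Sign(N_j).
\]
Next, $f|_{M'}: M' \to \Sigma'$ is a genuine $\Sigma_g$-bundle over a surface with $n$ boundary components whose monodromy lies in $\mathcal{H}_g$ by the hyperellipticity assumption on $f$. Applying Lemma \ref{lem:meyer} yields
\[
\Sign M' = -\sum_{j=1}^n \phi_g(\psi_j),
\]
where $\psi_j$ is the monodromy along the $j$-th boundary component of $\Sigma'$. After pinning down the orientation convention on $\partial \nu(y_j)$ so that $\psi_j$ coincides with the $\varphi_j$ used in the definition of $\sigma_{\loc}$, substitution gives
\[
\Sign M = \sum_{j=1}^n \bigl(-\phi_g(\varphi_j) + \Sign(N_j)\bigr) = \sum_{j=1}^n \sigma_{\loc}(f^{-1}(y_j)).
\]

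The structural step above is essentially formal; the real work, and the main obstacle, is obtaining the explicit rational values $\sigma_{\loc}(\I) = -(g+1)/(2g+1)$ and $\sigma_{\loc}(\II_h) = 4h(g-h)/(2g+1) - 1$. This I would split into two subproblems. First, compute $\Sign(N_j)$ for the standard neighborhood of a nodal fiber with non-separating vanishing cycle (type $\I$) and with separating vanishing cycle of genera $h, g-h$ (type $\II_h$); these are classical local computations via Kirby calculus. Second, evaluate $\phi_g$ on a Dehn twist of each $\mathcal{H}_g$-conjugacy class of vanishing cycle. Since $H_1(\mathcal{H}_g;\mathbb{Q})=0$, the cobounding function $\phi_g$ is unique, so its values are forced by the cocycle identity $\delta \phi_g = \tau_g$ evaluated on a few well-chosen relations in $\mathcal{H}_g$---for instance, chain relations of the form $(t_{c_1}\cdots t_{c_{2g+1}})^{2g+2}$ whose total spaces are hyperelliptic Lefschetz fibrations over $S^2$ with independently computable signatures. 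This combinatorial pin-down of $\phi_g$ on Dehn twists, rather than the topological decomposition, is what actually produces the arithmetic in the statement.
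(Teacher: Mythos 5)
Your decomposition into $M'=f^{-1}(\Sigma\setminus\amalg_j\nu(y_j))$ plus the fiber-neighborhoods $N_j$, followed by Novikov additivity and Lemma \ref{lem:meyer}, is exactly the argument behind Endo's Theorem 4.4 as the paper presents it (note that Lemma \ref{lem:meyer} is literally attributed to the proof of that theorem), and your orientation caveat on $\psi_j$ versus $\varphi_j$ is the right thing to check. Your final paragraph on pinning down the explicit values of $\sigma_{\loc}(\I)$ and $\sigma_{\loc}(\II_h)$ is not needed for the statement as given, since $\sigma_{\loc}$ is defined abstractly as $-\phi_g(\varphi_j)+\Sign(f^{-1}\nu(y_j))$, but it correctly describes how those numbers are obtained.
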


\section{A subgroup $\mathcal{H}_g(c)$ of the hyperelliptic mapping class group which preserves a curve $c$}\label{section:H_g(c)}
In this section, we investigate the abelianization and a generating set
of the hyperelliptic mapping class group $\mathcal{H}_g(c)$ which fix the curve $c$. 
In the last paragraphs of Section \ref{section:non-sep} and Section \ref{section:sep}, we will prove Proposition \ref{prop:generating set}.

Consider the case when $c$ is nonseparating.
If we take a diffeomorphism $T\in\Diff_+\Sigma_g$ which fixes the curve $c$ setwise,
it induces the diffeomorphism $\Sigma_g\setminus c\to\Sigma_g\setminus c$.
This diffeomorphism can be extended to the diffeomorphism of $\hat{T}$ of $\Sigma_{g-1}$
by regarding $\Sigma_g\setminus c$ as the surface of genus $g-1$ with two punctures.
Hence, we can define a homomorphism $\Phi_n:\mathcal{M}_g(c)\to \mathcal{M}_{g-1}$
by $\Phi_n([T])=[\hat{T}]$. 
Next, consider the case when $c$ is a separating curve in $\Sigma_g$
bounding subsurfaces of genus $h$ and $g-h$.
Identifying $\Sigma_g\setminus c$ with disjoint sum of two punctured surfaces of genus $h$ and $g-h$,
we can also define a homomorphism $\Phi_s:\mathcal{M}_g(c^{\ori})\to\mathcal{M}_h\times \mathcal{M}_{g-h}$.

\subsection{When $c$ is non-separating}\label{section:non-sep}

First, consider the case when $c$ is type $\I$. For symplicity,
we choose $c$ as in Figure \ref{fig:vanishing cycles}.
Let $\gamma\in \Sigma_g/\braket{\iota_g}$ be the projection of the curve $c$
by $p:\Sigma_g\to \Sigma_g/\braket{\iota_g}$.
Identifying $\Sigma_g/\braket{\iota_g}$ with $S^2$,
define a group $\mathcal{M}_0^{2g}(\gamma)$ by
\[
\mathcal{M}_0^{2g}(\gamma)=\{[T]\in\mathcal{M}_0^{2g+2}\,|\,T(\gamma)=\gamma\}.
\]
For a diffeomorphism $T\in C(\iota_g)$,
we have a diffeomorphism $\bar{T}\in\Diff_+(S^2,p_1,p_2,\cdots,p_{2g+1},p_{2g+2})$
defined by $pT=\bar{T}p$ as in Section \ref{section:hyp-MCG}.
Moreover, if $T\in C(\iota_g)$ preserves $c$ setwise,
$\bar{T}$ also preserves the path $\gamma$ setwise.
Hence, the image $\mathcal{P}_g(\mathcal{H}_g^s(c))$ is contained in $\mathcal{M}_0^{2g}(\gamma)$.
Conversely, if $\bar{T}\in\Diff_+(S^2,p_1,p_2,\cdots,p_{2g+1},p_{2g+2})$ preserves the path $\gamma$ setwise,
there is a diffeomorphism $T\in C(\iota_g)$ such that $T(c)=c$ and $pT=\bar{T}p$.
Thus, we have $\mathcal{P}_g(\mathcal{H}_g^s(c))=\mathcal{M}_0^{2g}(\gamma)$.
Consider the exact sequence obtained by restricting
the homomorphism $\mathcal{P}_g:\mathcal{H}_g^s\to\mathcal{M}_0^{2g+2}$ in Theorem \ref{thm:projection}
to $\mathcal{H}_g^s(c)$.
\begin{lemma}\label{lem:H_g^s(c)}
For $g\ge1$, the exact sequence
\[
\begin{CD}
1@>>>\mathbb{Z}/2\mathbb{Z}@>>>\mathcal{H}_g^s(c)@>\mathcal{P}_g>>\mathcal{M}_0^{2g}(\gamma)@>>>1
\end{CD}
\]
splits.
In particular, we have $\mathcal{H}_g^s(c)\cong \mathbb{Z}/2\mathbb{Z}\times\mathcal{M}_0^{2g}(\gamma)$.
\end{lemma}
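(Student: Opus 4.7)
The strategy will be to realize the splitting by an explicit section of $\mathcal{P}_g|_{\mathcal{H}_g^s(c)}$ coming from an ``orientation character'' on $c$. The first step is to observe that $\iota_g$ acts on $c$ as an orientation-reversing involution. Indeed, since the double cover $p:\Sigma_g\to S^2$ is branched precisely at the $2g+2$ Weierstrass points and $\gamma=p(c)$ is an arc connecting two of them, $p^{-1}(\gamma)$ is a circle which meets $p^{-1}(\{\text{endpoints of }\gamma\})$ in exactly two points; these are the fixed points of $\iota_g|_c$. Any involution of $S^1$ with exactly two fixed points is a reflection, hence orientation-reversing.

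With this in hand, I would define a homomorphism $\epsilon:\mathcal{H}_g^s(c)\to\mathbb{Z}/2\mathbb{Z}$ by declaring $\epsilon([T])$ to be $0$ or $1$ according as $T|_c$ preserves or reverses the orientation of $c$. This is isotopy-invariant and multiplicative, so it descends to a well-defined homomorphism on $\mathcal{H}_g^s(c)$, and by the preceding paragraph $\epsilon(\iota_g)=1$. Setting $K:=\ker\epsilon$, one immediately has $K\cap\langle\iota_g\rangle=\{1\}$, so the restriction $\mathcal{P}_g|_K:K\to\mathcal{M}_0^{2g}(\gamma)$ is injective in view of the exactness stated in the lemma. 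For surjectivity, given any $[\bar T]\in\mathcal{M}_0^{2g}(\gamma)$, choose any preimage $[T]\in\mathcal{H}_g^s(c)$ under $\mathcal{P}_g$; then whichever of $[T]$ and $[T]\cdot\iota_g$ has trivial $\epsilon$-value lies in $K$ and still maps to $[\bar T]$.

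Consequently $\mathcal{P}_g|_K$ is an isomorphism, its inverse is the desired section $s:\mathcal{M}_0^{2g}(\gamma)\to\mathcal{H}_g^s(c)$, and because $\iota_g$ is central in $\mathcal{H}_g^s$ one obtains the internal direct product decomposition $\mathcal{H}_g^s(c)=\langle\iota_g\rangle\times s(\mathcal{M}_0^{2g}(\gamma))\cong\mathbb{Z}/2\mathbb{Z}\times\mathcal{M}_0^{2g}(\gamma)$. The main step requiring care is the geometric verification that $\iota_g|_c$ is a reflection rather than a free rotation of $c$; this is exactly what is already encoded in the notation $\mathcal{M}_0^{2g}(\gamma)$ (the ``$2g$'' instead of ``$2g+2$'' reflects that two of the Weierstrass points are absorbed into the arc $\gamma$). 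Once the orientation character is set up, everything else is a formal consequence of having a homomorphism separating the central kernel $\langle\iota_g\rangle$ from a distinguished complement.
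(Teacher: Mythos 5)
Your proposal is correct and follows essentially the same route as the paper: the paper defines the retraction $\lambda:\mathcal{H}_g^s(c)\to\mathbb{Z}/2\mathbb{Z}$ by whether $\varphi_*[c]=\pm[c]$ in $H_1(\Sigma_g;\mathbb{Z})$, which is exactly your orientation character $\epsilon$, and notes $\lambda([\iota_g])=1$ to conclude the splitting. Your added geometric justification that $\iota_g|_c$ is a reflection (hence orientation-reversing) and the explicit construction of the complementary section are just details the paper leaves implicit.
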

\begin{proof}
Define a map $\lambda:\mathcal{H}_g^s(c)\to \mathbb{Z}/2\mathbb{Z}$ by $\lambda(\varphi)=0$
if $\varphi_*[c]=[c]\in H_1(\Sigma_g;\mathbb{Z})$,
and $\lambda(\varphi)=1$ if $\varphi_*[c]=-[c]\in H_1(\Sigma_g;\mathbb{Z})$.
Then, $\lambda$ is a homomorphism,
and satisfies $\lambda([\iota_g])=1\in\mathbb{Z}/2\mathbb{Z}$.
Thus, it induces a splitting of the exact sequence.
\end{proof}

Let $s:\partial D^2\to \partial D^2$ denote the half-rotation of the circle. 
Let $\mathcal{M}_{0,\half}^{2g}$ denote the group which consists of the path-connected components of
$\{T\in \Diff_+(D^2,p_1,p_2,\cdots,p_{2g})\,|\,T|_{\partial D^2}=s\text{ or }\id_{\partial D^2}\}$.

\begin{lemma}\label{lem:gamma half}
Let $g\ge1$.
\[
\mathcal{M}_0^{2g}(\gamma)\cong\mathcal{M}_{0,\half}^{2g}.
\]
\end{lemma}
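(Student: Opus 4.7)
The plan is to realize the isomorphism geometrically by cutting $S^2$ open along (a neighborhood of) the arc $\gamma$. Since $c$ is a non-separating simple closed curve with $\iota_g(c)=c$, the curve $c$ meets the fixed-point set of $\iota_g$ in exactly two points, and so the image $\gamma=p(c)\subset S^2$ is a simple arc whose endpoints are two branch points, which we may take to be $p_{2g+1}$ and $p_{2g+2}$, with interior disjoint from the other $p_i$. Choose a closed disk neighborhood $N\subset S^2$ of $\gamma$ containing only $p_{2g+1}$ and $p_{2g+2}$ among the branch points, put $D:=S^2\setminus\Int N$, and fix an identification $D\cong D^2$ sending the marked points $p_1,\dots,p_{2g}$ to the standard points of the same names and the two endpoints of $\gamma$ on $\partial N=\partial D$ to the two fixed points of the half-rotation $s$ on $\partial D^2$.

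First I would construct the forward map $\Phi_\gamma:\mathcal{M}_0^{2g}(\gamma)\to\mathcal{M}_{0,\half}^{2g}$. Given $[T]\in\mathcal{M}_0^{2g}(\gamma)$, a standard ambient isotopy argument lets us replace $T$ by an isotopic diffeomorphism (still fixing $\{p_1,\dots,p_{2g+2}\}$ setwise and $\gamma$ setwise) that preserves $N$ setwise and, on $\partial N$ in the chosen identification with $\partial D^2$, is either $\id$ or $s$ according to whether $T$ preserves or reverses the orientation of $\gamma$. I would then set $\Phi_\gamma([T])=[T|_D]$. Well-definedness amounts to the observation that any isotopy between two such normalized representatives can itself be ambient-isotoped to preserve $N$ at every time (using uniqueness of tubular neighborhoods of $\gamma$ rel endpoints), so the restriction to $D$ traces out an isotopy through diffeomorphisms whose boundary restriction is in $\{\id,s\}$. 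Homomorphism property is immediate once representatives have been normalized to preserve $N$.

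For the inverse, given $[\bar T]\in\mathcal{M}_{0,\half}^{2g}$ I would pick a representative $\bar T$ and extend it across $N$: by the identity on $N$ if $\bar T|_{\partial D^2}=\id$, and by a fixed involution $\rho$ of $N$ that preserves $\gamma$ setwise, swaps $p_{2g+1}\leftrightarrow p_{2g+2}$, and restricts to $s$ on $\partial N$ if $\bar T|_{\partial D^2}=s$. The resulting diffeomorphism of $S^2$ lies in $\mathcal{M}_0^{2g}(\gamma)$, and choosing a different $\rho$ with the same boundary and endpoint behavior changes the extension only by an isotopy supported in $N$ preserving $\gamma$ — i.e. gives the same class. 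This yields a map $\Psi_\gamma:\mathcal{M}_{0,\half}^{2g}\to\mathcal{M}_0^{2g}(\gamma)$, and the two constructions are manifestly mutually inverse on the normalized representatives.

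The main obstacle is checking well-definedness on both sides: specifically, that the mapping class group of the punctured bigon $(N;\gamma,\{p_{2g+1},p_{2g+2}\})$ with boundary in $\{\id,s\}$ is trivial, so that the extension across $N$ does not introduce any ambiguity. I would handle this by an Alexander-trick argument — $N\setminus\gamma$ is a disjoint union of two disks each containing no punctures and with boundary a union of two arcs (one in $\partial N$ and one copy of $\gamma$), and the two admissible boundary conditions $\id$ and $s$ on $\partial N$ are precisely matched by the identity and the half-turn on each of these disks. Combined with the classical fact that isotopies of $S^2$ preserving a set of marked points and an arc between two of them may be realized inside any prescribed regular neighborhood of the arc, this gives that $\Phi_\gamma$ and $\Psi_\gamma$ are mutually inverse group isomorphisms.
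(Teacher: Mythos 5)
Your proof is correct and follows essentially the same route as the paper's: both realize the isomorphism by cutting $S^2$ along (a regular neighborhood of) the arc $\gamma$, the only cosmetic difference being that the paper runs the construction backwards, forming $S^2$ from $S^2-\Int D^2$ by identifying the two boundary arcs $\alpha(t)\sim\beta(t)$ so that $\partial D^2$ folds onto $\gamma$, while you excise a bigon neighborhood $N$ of $\gamma$; the normalization of representatives near $\gamma$ and the appeal to the isotopy extension theorem for well-definedness are the same in both treatments. One detail to correct: the half-rotation $s$ of $\partial D^2$ is free, so it has no fixed points; the identification $D\cong D^2$ should send the two endpoints of $\gamma$ on $\partial N$ to an \emph{antipodal pair interchanged by $s$}. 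This is in fact forced by your own setup, since a class reversing the orientation of $\gamma$ must swap $p_{2g+1}$ and $p_{2g+2}$, and the rest of your argument (the Alexander-trick analysis of the two half-bigons of $N\setminus\gamma$) goes through verbatim with this correction.
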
 
\begin{proof}
Let $\mathcal{M}_0^{2g}(\gamma^\text{ori})$ be a subgroup of $\mathcal{M}_0^{2g}(\gamma)$
consists of mapping classes which preserve the orientation of the path $\gamma$.
First, we prove the isomorphism
\[
\mathcal{M}_0^{2g}(\gamma^\text{ori})\cong\mathcal{M}_{0,1}^{2g}.
\]

Let $\Diff_+(S^2, \{p_1,\cdots,p_{2g+2}\},[\gamma])$ be the group
consists of orientation-preserving diffeomorphisms $T:S^2\to S^2$
such that $T(\{p_1,\cdots,p_{2g+2}\})=\{p_1,\cdots,p_{2g+2}\}$
and there exists a closed neighborhood $\nu(\gamma)$ of $\gamma$ where $T|_{\nu(\gamma)}$ is the identity map.
Let $T$ be a representative of a mapping class in $\mathcal{M}^{2g}(\gamma^{\ori})$.
Using the isotopy extension theorem,
we can change $T$ into a diffeomorphism in $\Diff_+(S^2, \{p_1,\cdots,p_{2g+2}\},[\gamma])$ by some isotopy.
Moreover, we can also prove that
\[
\mathcal{M}^{2g}(\gamma^{\ori})\cong \pi_0\Diff_+(S^2, \{p_1,\cdots,p_{2g+2}\},[\gamma]),
\]
using the isotopy extension theorem.
Similarly, let $\Diff_+(S^2-\Int D^2, p_1,\cdots,p_{2g}, [\partial D^2])$ be a group
consists of orientation-preserving diffeomorphisms $T:S^2-\Int D^2\to S^2-\Int D^2$
such that there exists a closed neighborhood $\nu(\partial D^2)$
where $T|_{\nu(\partial D^2)}$ is the identity map.
We can also show that
\[
\mathcal{M}_{0,1}^{2g}\cong \pi_0\Diff_+(S^2-\Int D^2, p_1,\cdots,p_{2g}, [\partial D^2]).
\]
Separate the circle $\partial D^2$ into two arcs $\alpha:[0,1]\to \partial D^2$ and $\beta:[0,1]\to \partial D^2$
such that $\alpha(0)=\beta(0)$ and $\alpha(1)=\beta(1)$.
If we identify $\alpha(t)$ and $\beta(t)$ in $S^2-\Int D^2$, the quotient space is diffeomorphic to $S^2$. 
Choose an identification $L$ of the $(2g+3)$-tuples
\[
(S^2-\Int D^2/(\alpha(t)\sim \beta(t)), p_1,\cdots,p_{2g},\alpha(0),\alpha(1))
\cong (S^2, p_1,\cdots,p_{2g},p_{2g+1},p_{2g+2}).
\]
Since a diffeomorphism $T\in \Diff_+(S^2-\Int D^2)$ satisfying $T|_{\nu(\partial D^2)}=\id_{\nu(\partial D^2)}$ 
induces a diffeomorphism $\bar{T}$ of $S^2-\Int D^2/(\alpha(t)\sim \beta(t))$,
we have the isomorphism $\mathcal{M}_{0,1}^{2g}\cong \mathcal{M}^{2g}(\gamma^{\ori})$
defined by $[T]\mapsto [L\bar{T}L^{-1}]$.

Next, we prove $\mathcal{M}_0^{2g}(\gamma)\cong\mathcal{M}_{0,\half}^{2g}$.
Choose a diffeomorphism $r\in \Diff_+(S^2-\Int D^2)$ such that
$r\alpha(t)=\beta(1-t)$ and $r(\{p_1,\cdots,p_{2g}\})=\{p_1,\cdots,p_{2g}\}$.
It induces a diffeomorphism $\bar{r}\in \Diff_+ S^2$ such that
$\bar{r}(\{p_1,\cdots,p_{2g}\})=\{p_1,\cdots,p_{2g}\}$, $\bar{r}(p_{2g+1})=p_{2g+2}$,
and $\bar{r}(p_{2g+2})=p_{2g+1}$.
Consider the group consisting of diffeomorphisms $T$ of $S^2$ such that
$T(\{p_1,\cdots,p_{2g+2}\})=\{p_1,\cdots,p_{2g+2}\}$, and
$T|_{\nu(\gamma)}$ is equal to $\bar{r}|_{\nu(\gamma)}$ \text{ or } $\id_{\nu(\gamma)}$
for some closed neighborhood $\nu(\gamma)$
instead of $\Diff_+(S^2, \{p_1,\cdots,p_{2g+2}\},[\gamma])$.
In the same way, 
consider the group consisting of diffeomorphisms $T$ of $S^2-\Int D^2$ such that
$T(\{p_1,\cdots,p_{2g}\})=\{p_1,\cdots,p_{2g}\}$, and
$T|_{\nu(\partial D^2)}$ is equal to $r|_{\nu(\partial D^2)}$ \text{ or } $\id_{\nu(\partial D^2)}$
instead of $\Diff_+(S^2-\Int D^2, p_1,\cdots,p_{2g}, [\partial D^2])$.
Then, we have the isomorphism between their path-connected components, similarly.
Thus, we have $\mathcal{M}^{2g}(\gamma)\cong \mathcal{M}_{0,\half}^{2g}$.
\end{proof}

We can define a homomorphism $\mathcal{M}_{0,\half}^{2g}\to\braket{s}$ by mapping $[T]$ to $T|_{\partial D^2}$,
where $\braket{s}$ is the cyclic group of order 2 generated by $s$. 
Then, the kernel is the subgroup $\mathcal{M}_{0,1}^{2g}$.
\begin{lemma}\label{lem:M_{0,half}}
For $g\ge1$, the exact sequence
\[
\begin{CD}
1@>>>\mathcal{M}_{0,1}^{2g}@>>>\mathcal{M}_{0,\half}^{2g}@>>>\mathbb{Z}/2\mathbb{Z}@>>>1
\end{CD}
\]
splits. 
\end{lemma}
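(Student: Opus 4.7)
The plan is to construct an explicit section of the surjection $\mathcal{M}_{0,\half}^{2g}\to \mathbb{Z}/2\mathbb{Z}$. Since the target is cyclic of order two, it suffices to exhibit a diffeomorphism $R\in\Diff_+(D^2,p_1,\cdots,p_{2g})$ with $R|_{\partial D^2}=s$ whose square represents the trivial mapping class.

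First I would adjust the configuration of marked points. Because the mapping class group $\mathcal{M}_{0,\half}^{2g}$ depends, up to canonical isomorphism, only on the number $2g$ of interior marked points, I may place $\{p_1,\cdots,p_{2g}\}$ in $\Int D^2$ in any convenient position. I would choose them so that the set $\{p_1,\cdots,p_{2g}\}$ is invariant under the rotation of $D^2$ by angle $\pi$ around its center, for instance as $g$ antipodal pairs along a diameter. This is legitimate since any two $2g$-tuples of distinct interior points are ambient isotopic rel $\partial D^2$ by the isotopy extension theorem.

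Next, let $R:D^2\to D^2$ be the rotation by angle $\pi$ around the center. By construction $R$ is orientation-preserving, preserves the set $\{p_1,\cdots,p_{2g}\}$ setwise, and restricts to the half-rotation $s$ on $\partial D^2$. Hence $R$ lies in $\{T\in \Diff_+(D^2,p_1,\cdots,p_{2g})\mid T|_{\partial D^2}=s\text{ or }\id_{\partial D^2}\}$, so $[R]\in \mathcal{M}_{0,\half}^{2g}$ is a well-defined element mapping to the nontrivial class in $\mathbb{Z}/2\mathbb{Z}$. Moreover $R^2=\id_{D^2}$ on the nose, so $[R]^2=1$ in $\mathcal{M}_{0,\half}^{2g}$.

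Therefore the assignment sending the nontrivial element of $\mathbb{Z}/2\mathbb{Z}$ to $[R]$ defines a group homomorphism which is a section of the projection, and the exact sequence splits. The only subtlety, which is the place to be careful rather than a genuine obstacle, is justifying the initial symmetric placement of the marked points; after that the argument is essentially the observation that a geometric involution of $D^2$ with the correct boundary behaviour exists.
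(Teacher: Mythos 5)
Your proposal is correct and is essentially identical to the paper's own proof: the paper likewise arranges the $2g$ marked points symmetrically (Figure \ref{fig:split}) and takes the $180$-degree rotation $\mu$ of the disk, which restricts to $s$ on $\partial D^2$ and squares to the identity, as the image of the generator of $\mathbb{Z}/2\mathbb{Z}$. Your extra remark justifying the symmetric placement of the marked points via isotopy extension is a point the paper passes over with ``we may assume,'' but the argument is the same.
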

\begin{proof}
We may assume $p_1,\cdots,p_{2g}$ are arranged in the disk as in Figure \ref{fig:split}. 
Consider an involution $\mu\in \Diff_+(D^2, p_1,\cdots,p_{2g})$ which rotates the disk 180 degrees
and interchanges the points $p_i$ and $p_{g+i}$ for $i=1,\cdots,g$.
\begin{figure}[htbp]
\begin{center}
\includegraphics[width=25mm]{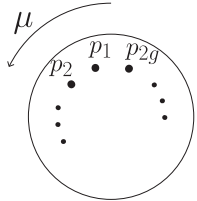}
\end{center}
\caption{$p_1,\cdots,p_{2g}$ in $D^2$}
\label{fig:split}
\end{figure}
Define a homomorphism $j:\mathbb{Z}/2\mathbb{Z}\to \mathcal{M}_{0,\half}^{2g}$ by $j(1)=\mu$.
This induces the splitting of the above exact sequence.
\end{proof}
\begin{lemma}\label{lem:first homology}
Let $g\ge1$, and $c$ a non-separating simple closed curve such that $\iota_g(c)=c$.
Then, we have
\[
H_1(\mathcal{H}_g^s(c);\mathbb{Z})=\mathbb{Z}\oplus(\mathbb{Z}/2\mathbb{Z})^2.
\]
\end{lemma}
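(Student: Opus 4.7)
The plan is to assemble the three structural lemmas already established (Lemmas \ref{lem:H_g^s(c)}, \ref{lem:gamma half}, and \ref{lem:M_{0,half}}) into an explicit iterated semidirect-product description of $\mathcal{H}_g^s(c)$, and then read off its abelianization one factor at a time. Concretely, Lemma \ref{lem:H_g^s(c)} gives $\mathcal{H}_g^s(c)\cong \mathbb{Z}/2\mathbb{Z}\times \mathcal{M}_0^{2g}(\gamma)$, Lemma \ref{lem:gamma half} gives $\mathcal{M}_0^{2g}(\gamma)\cong \mathcal{M}_{0,\half}^{2g}$, and Lemma \ref{lem:M_{0,half}} gives $\mathcal{M}_{0,\half}^{2g}\cong \mathcal{M}_{0,1}^{2g}\rtimes \mathbb{Z}/2\mathbb{Z}$, with the splitting realised by the $\pi$-rotation $\mu$. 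Stringing these together yields
\[
\mathcal{H}_g^s(c)\cong \mathbb{Z}/2\mathbb{Z}\;\times\;\bigl(\mathcal{M}_{0,1}^{2g}\rtimes \mathbb{Z}/2\mathbb{Z}\bigr),
\]
so it suffices to compute $H_1$ of the parenthesised semidirect product.

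The next step is to identify $\mathcal{M}_{0,1}^{2g}$ with the Artin braid group $B_{2g}$, whose abelianization is classically $\mathbb{Z}$, generated by the common image of any of the standard half-twist generators $\sigma_1,\dots,\sigma_{2g-1}$ (they all collapse to the same class by the braid relation $\sigma_i\sigma_{i+1}\sigma_i=\sigma_{i+1}\sigma_i\sigma_{i+1}$). For a semidirect product, the abelianization splits as a direct sum precisely when the action on $H_1$ of the normal factor is trivial, so the whole calculation reduces to understanding how conjugation by $\mu$ acts on $H_1(B_{2g};\mathbb{Z})=\mathbb{Z}$. Since $\mu$ is the orientation-preserving $\pi$-rotation that permutes the marked points $p_i\leftrightarrow p_{g+i}$, conjugation by $\mu$ sends each positive half-twist $\sigma_i$ to the positive half-twist $\sigma_{j(i)}$ along the rotated arc, which is again one of the standard generators. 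All such generators represent the same class in $H_1(B_{2g})$, so $\mu$ acts trivially there, giving
\[
H_1\bigl(\mathcal{M}_{0,1}^{2g}\rtimes \mathbb{Z}/2\mathbb{Z};\mathbb{Z}\bigr)\cong \mathbb{Z}\oplus \mathbb{Z}/2\mathbb{Z}.
\]

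Combining with the outer $\mathbb{Z}/2\mathbb{Z}$ factor from Lemma \ref{lem:H_g^s(c)} then gives
\[
H_1(\mathcal{H}_g^s(c);\mathbb{Z})\cong \mathbb{Z}\oplus(\mathbb{Z}/2\mathbb{Z})^2,
\]
as asserted. The main obstacle is the third sentence of the second paragraph: verifying that conjugation by $\mu$ really sends each $\sigma_i$ to some $\sigma_j$ (and not, e.g., to $\sigma_j^{-1}$, which would still lie in the same conjugacy class but could in principle introduce a sign issue had $\mu$ reversed orientation). Because $\mu$ is orientation-preserving and the arcs defining the $\sigma_i$ are carried to the arcs defining the $\sigma_{j(i)}$ with the correct coorientation, no such sign appears, and the action on $H_1$ is genuinely trivial. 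The remaining ingredients—the identification $\mathcal{M}_{0,1}^{2g}=B_{2g}$, the computation $H_1(B_n)=\mathbb{Z}$, and the abelianization formula for a semidirect product with trivial coefficient action—are standard.
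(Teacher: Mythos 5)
Your proof is correct and follows essentially the same route as the paper: combine Lemmas \ref{lem:H_g^s(c)}, \ref{lem:gamma half}, and \ref{lem:M_{0,half}} with the classical fact $H_1(\mathcal{M}_{0,1}^{2g};\mathbb{Z})\cong\mathbb{Z}$ and read off the abelianization. The only difference is that you explicitly verify that conjugation by $\mu$ acts trivially on $H_1(\mathcal{M}_{0,1}^{2g};\mathbb{Z})$ (so that the coinvariants are all of $\mathbb{Z}$), a point the paper's proof uses implicitly when it writes $H_1(\mathcal{M}_{0,\half}^{2g};\mathbb{Z})\cong H_1(\mathcal{M}_{0,1}^{2g};\mathbb{Z})\oplus\mathbb{Z}/2\mathbb{Z}$.
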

\begin{proof}
By Lemma \ref{lem:H_g^s(c)} and Lemma \ref{lem:M_{0,half}}, we have
\[
H_1(\mathcal{H}_g^s(c);\mathbb{Z})\cong \mathbb{Z}/2\mathbb{Z}\oplus H_1(\mathcal{M}_0^{2g}(\gamma);\mathbb{Z}),
\text{ and }
H_1(\mathcal{M}_{0,\half}^{2g};\mathbb{Z})\cong H_1(\mathcal{M}_{0,1}^{2g};\mathbb{Z})\oplus \mathbb{Z}/2\mathbb{Z}.
\]
We showed $\mathcal{M}_0^{2g}(\gamma)\cong \mathcal{M}_{0,\half}^{2g}$ in Lemma \ref{lem:gamma half},
and it is known that $H_1(\mathcal{M}_{0,1}^{2g};\mathbb{Z})\cong \mathbb{Z}$
(see, for example, \cite[Section 9.1.3 and 9.2]{farb2011pmcg}).
Hence, we have $H_1(\mathcal{H}_g^s(c);\mathbb{Z})\cong \mathbb{Z}\oplus (\mathbb{Z}/2\mathbb{Z})^2$.
\end{proof}

Define a group $\mathcal{H}_g^s(c)$ by $\mathcal{H}_g^s(c)=\{[T]\in \mathcal{H}_g^s\,|\, T(c)=c\}$.
If we restrict the homomorphism $\mathcal{H}_g^s\to\mathcal{H}_g$
in Theorem \ref{thm:hypMCG} to $\mathcal{H}_g^s(c)$,
we have a homomorphism $\mathcal{H}_g^s(c)\to\mathcal{H}_g(c)$.
Note that it is not obvious that this homomorphism is surjective,
in other words, mapping classes in $\mathcal{H}_g(c)$ can be represented
by elements in $C(\iota_g)$ which fix the curve $c$ setwise.
In \cite[Lemma 3.1]{hayano2011fah}, we showed:
\begin{lemma}\label{lem:isomH_g(c)}
Let $g\ge1$, and $c$ an essential simple closed curve in $\Sigma_g$.
The homomorphism
\[
\mathcal{H}_g^s(c)\to \mathcal{H}_g(c)
\]
is surjective.
\end{lemma}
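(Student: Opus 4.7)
My plan is to show that every $\varphi \in \mathcal{H}_g(c)$ can be represented by a diffeomorphism in $C(\iota_g)$ that preserves $c$ \emph{setwise}, not merely up to isotopy. Using the surjection $\pi_0 C(\iota_g)\twoheadrightarrow \mathcal{H}_g$, first choose any $T_0 \in C(\iota_g)$ with $[T_0] = \varphi$. Since $\varphi \in \mathcal{H}_g(c)$, the curve $T_0(c)$ is isotopic to $c$ in $\Sigma_g$. Moreover $T_0(c)$ is itself $\iota_g$-invariant because $\iota_g T_0(c) = T_0 \iota_g(c) = T_0(c)$. So the problem reduces to producing an ambient isotopy $\{H_s\}_{s\in[0,1]}$ inside $C(\iota_g)$ with $H_0 = \id$ and $H_1(T_0(c)) = c$; given this, $T := H_1 T_0 \in C(\iota_g)$ preserves $c$ setwise and still represents $\varphi$, giving the desired lift in $\mathcal{H}_g^s(c)$.

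To construct this equivariant isotopy I would descend to the quotient $\Sigma_g/\langle \iota_g\rangle \cong S^2$ with branch set $P = \{p_1,\dots,p_{2g+2}\}$. Any $\iota_g$-invariant essential simple closed curve in $\Sigma_g$ projects under $p\colon\Sigma_g\to S^2$ either to an embedded arc in $S^2$ with both endpoints in $P$ and interior in $S^2\setminus P$ (when $\iota_g$ restricts to an orientation-reversing involution of the curve with two fixed points) or to an embedded simple closed curve in $S^2\setminus P$ (when $\iota_g$ restricts to a free involution). Because $T_0$ commutes with $\iota_g$, the projections $\gamma := p(c)$ and $\gamma' := p(T_0(c)) = \bar{T}_0(\gamma)$ fall in the same case. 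The Birman--Hilden descent of isotopies, which is the mechanism underlying Theorem \ref{thm:hypMCG}, shows that the non-equivariant isotopy between $c$ and $T_0(c)$ projects to an isotopy of $\gamma'$ to $\gamma$ in $(S^2,P)$, so the two images are isotopic as arcs (or as simple closed curves) in $(S^2,P)$. Standard isotopy extension on $S^2$ rel $P$ then yields an ambient isotopy $\{\bar{H}_s\}$ of $S^2$ fixing $P$ setwise with $\bar{H}_1(\gamma') = \gamma$.

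Finally, any such isotopy of $S^2$ rel $P$ lifts uniquely to an $\iota_g$-equivariant ambient isotopy $\{H_s\}$ of $\Sigma_g$: over $S^2\setminus P$ this is the unique deck-transformation-equivariant lift to the connected double cover $\Sigma_g \setminus p^{-1}(P)$, and it extends smoothly over the fixed points of $\iota_g$ via the local normal form $z\mapsto z^2$ of $p$ near each branch point. This produces the $\{H_s\}$ required in the first paragraph, completing the argument. The main technical obstacle is the descent-and-lift step for isotopies through the branched covering $p$, particularly the smooth extension across the branch points; however, this is precisely the classical Birman--Hilden technique already invoked in Section \ref{section:hyp-MCG}, so the verification is routine rather than novel.
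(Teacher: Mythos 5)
The paper does not prove this lemma in the present text at all: it is quoted from \cite[Lemma 3.1]{hayano2011fah}. So the only question is whether your argument is sound, and it has a genuine gap at the central step. You assert that ``the non-equivariant isotopy between $c$ and $T_0(c)$ projects to an isotopy of $\gamma'$ to $\gamma$ in $(S^2,P)$.'' A non-equivariant ambient isotopy of $\Sigma_g$ does not commute with $\iota_g$, so it does not descend to the quotient (the intermediate curves need not even be $\iota_g$-invariant), and Theorem \ref{thm:hypMCG} is about promoting isotopies between \emph{equivariant homeomorphisms} to equivariant isotopies; it is not a descent mechanism for isotopies of curves. Worse, the conclusion you want from this step is actually false for some legitimate choices of $T_0$, which also invalidates your opening reduction ``choose any $T_0$.'' Take $g=1$, $\Sigma_1=\mathbb{R}^2/\mathbb{Z}^2$, $\iota_1(x)=-x$, $c=\{y=0\}$, $\varphi=1$, and let $T_0$ be translation by $(0,1/2)$. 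Then $T_0\in C(\iota_1)$ and $T_0\simeq \id$, but $T_0(c)=\{y=1/2\}$ passes through the two fixed points of $\iota_1$ that are \emph{not} on $c$. For any ambient isotopy $\{H_s\}$ through $C(\iota_1)$ with $H_0=\id$, each $H_s$ permutes the four fixed points of $\iota_1$, and this permutation is continuous in $s$, hence constantly the identity; so $H_1$ fixes those four points and can never carry $T_0(c)$ to $c$. Correspondingly, $\gamma$ and $\gamma'$ are arcs with disjoint endpoint pairs in $(S^2,P)$ and are not isotopic rel $P$.

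The missing ingredient is precisely the nontrivial statement that isotopic $\iota_g$-invariant essential curves are \emph{equivariantly} isotopic, and you cannot get it for free from the non-equivariant isotopy. For $g\ge 2$ it is true, but needs an argument (e.g.\ an $\iota_g$-invariant hyperbolic metric makes the common geodesic representative invariant by uniqueness, and one must still equivariantly isotope each curve onto it; equivalently one must show the endpoint pair of the projected arc is an invariant of the isotopy class upstairs, which relies on the injectivity in Theorem \ref{thm:hypMCG} and, as the example shows, genuinely fails for $g=1$). Since the lemma includes $g=1$, you must also choose or correct the representative $T_0$ (e.g.\ by composing with a suitable translation in $C(\iota_1)$) rather than starting from an arbitrary one. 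Your final lifting paragraph is fine and matches the mechanism the paper uses in the proof of Lemma \ref{lem:H_{g,1}-isom}, but as written the middle of the argument does not go through.
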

By Theorem \ref{thm:hypMCG}, this is also injective when $g\ge2$.

Consider the case when $g=1$.
As is well-known, the group $\mathcal{H}_1$ coincides with $\mathcal{M}_1$.
Hence, $\mathcal{H}_1(c)$ also coincides with $\mathcal{M}_1(c)$.
If $c=c_3$ in Figure \ref{fig:scc},
the group $\mathcal{M}_1(c)$ is described as
\[
\mathcal{M}_1(c)=
\left\{\left.
\begin{pmatrix}
\epsilon&n\\
0&\epsilon
\end{pmatrix}
\in \SL(2;\mathbb{Z})
\,\right|\,\epsilon\in\{\pm1\},n\in\mathbb{Z}
\right\}.
\]
By mapping $[T]\in\mathcal{M}_1(c)$ to $\epsilon\in\mathbb{Z}/2\mathbb{Z}$,
we have a split exact sequence
\[
\begin{CD}
1@>>>\mathbb{Z}@>>>\mathcal{M}_1(c)@>>>\mathbb{Z}/2\mathbb{Z}@>>>1.
\end{CD}
\]
Thus, we have $H_1(\mathcal{H}_1(c);\mathbb{Z})=\mathbb{Z}\oplus\mathbb{Z}/2\mathbb{Z}$.
Combining Lemma \ref{lem:first homology}, Lemma \ref{lem:isomH_g(c)}, and the case when $g=1$ as above, we have:
\begin{lemma}
Let $c$ be a non-separating simple closed curve such that $\iota_g(c)=c$. Then, we have
\[
H_1(\mathcal{H}_g(c);\mathbb{Z})
=
\begin{cases}
\mathbb{Z}\oplus(\mathbb{Z}/2\mathbb{Z})^2& \text{ when }g\ge2,\\
\mathbb{Z}\oplus\mathbb{Z}/2\mathbb{Z}& \text{ when }g=1.
\end{cases}
\]
\end{lemma}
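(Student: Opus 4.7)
The plan is to assemble the lemma directly from the immediately preceding results, treating $g\ge 2$ and $g=1$ as two separate cases since the identification between $\mathcal{H}_g^s$ and $\mathcal{H}_g$ is established by different arguments in each range.

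For the case $g\ge 2$, I would proceed as follows. By Lemma \ref{lem:isomH_g(c)}, the homomorphism $\mathcal{H}_g^s(c)\to\mathcal{H}_g(c)$ is surjective. On the other hand, it is the restriction of the Birman–Hilden homomorphism $\mathcal{H}_g^s\to\mathcal{H}_g$, which by Theorem \ref{thm:hypMCG} is injective for $g\ge 2$; the restriction to a subgroup is therefore also injective. Consequently $\mathcal{H}_g^s(c)\cong\mathcal{H}_g(c)$ as groups, and Lemma \ref{lem:first homology} then yields $H_1(\mathcal{H}_g(c);\mathbb{Z})\cong\mathbb{Z}\oplus(\mathbb{Z}/2\mathbb{Z})^2$.

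For the case $g=1$, I would use the explicit computation already carried out in the paragraph preceding the statement. Since $\mathcal{H}_1=\mathcal{M}_1$, one has $\mathcal{H}_1(c)=\mathcal{M}_1(c)$, and for $c=c_3$ the subgroup $\mathcal{M}_1(c)\subset\mathrm{SL}(2;\mathbb{Z})$ is precisely the group of upper-triangular matrices with equal diagonal entries $\pm 1$. The split short exact sequence
\[
1\longrightarrow\mathbb{Z}\longrightarrow\mathcal{M}_1(c)\longrightarrow\mathbb{Z}/2\mathbb{Z}\longrightarrow 1
\]
realizes $\mathcal{M}_1(c)$ as a semidirect product in which the $\mathbb{Z}/2\mathbb{Z}$-action on $\mathbb{Z}$ is $n\mapsto -n$ (this is visible from matrix conjugation). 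Abelianizing, the $\mathbb{Z}$-factor survives because the semidirect action becomes trivial on homology (one sees $n$ and $-n$ both persist), giving $H_1(\mathcal{M}_1(c);\mathbb{Z})=\mathbb{Z}\oplus\mathbb{Z}/2\mathbb{Z}$. Finally, for any other non-separating $c$ with $\iota_1(c)=c$ one invokes that $\mathcal{M}_1$ acts transitively on such curves by elements commuting with $\iota_1$, so the isomorphism type of $\mathcal{H}_1(c)$ is independent of the choice of $c$.

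The two cases are essentially bookkeeping; the only step with any subtlety is verifying that the $\mathbb{Z}$-factor in $\mathcal{M}_1(c)$ survives abelianization under the nontrivial $\mathbb{Z}/2\mathbb{Z}$-action, but this follows directly from the explicit matrix description. No new computation is required beyond citing Lemmas \ref{lem:first homology} and \ref{lem:isomH_g(c)} and Theorem \ref{thm:hypMCG}.
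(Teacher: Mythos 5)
Your $g\ge 2$ case is exactly the paper's argument: surjectivity from Lemma \ref{lem:isomH_g(c)}, injectivity from Theorem \ref{thm:hypMCG}, and then Lemma \ref{lem:first homology}. That part is fine.

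The $g=1$ case, however, contains a genuine error in the key step. You assert that in the split extension $1\to\mathbb{Z}\to\mathcal{M}_1(c)\to\mathbb{Z}/2\mathbb{Z}\to 1$ the $\mathbb{Z}/2\mathbb{Z}$-action on $\mathbb{Z}$ is $n\mapsto -n$, and then claim the $\mathbb{Z}$-factor nevertheless survives abelianization. Both halves of this are wrong, and they are wrong in opposite directions. If the action really were $n\mapsto -n$, the group would be the infinite dihedral group $\mathbb{Z}\rtimes\mathbb{Z}/2\mathbb{Z}$, whose abelianization is $(\mathbb{Z}/2\mathbb{Z})^2$: the relation $bab^{-1}=a^{-1}$ forces $a^2\in[G,G]$, so the $\mathbb{Z}$-factor does \emph{not} survive. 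Your stated action would therefore contradict your stated conclusion. What actually saves the computation is that the action is trivial: the preimage of $1\in\mathbb{Z}/2\mathbb{Z}$ under a splitting can be taken to be $-I=\bigl(\begin{smallmatrix}-1&0\\0&-1\end{smallmatrix}\bigr)$, which is central in $\SL(2;\mathbb{Z})$, so
\[
\begin{pmatrix}-1&0\\0&-1\end{pmatrix}\begin{pmatrix}1&n\\0&1\end{pmatrix}\begin{pmatrix}-1&0\\0&-1\end{pmatrix}^{-1}=\begin{pmatrix}1&n\\0&1\end{pmatrix},
\]
and $\mathcal{M}_1(c)\cong\mathbb{Z}\times\mathbb{Z}/2\mathbb{Z}$ is a direct product, giving $H_1=\mathbb{Z}\oplus\mathbb{Z}/2\mathbb{Z}$ as the paper states. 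You should replace the ``$n\mapsto -n$'' claim by the observation that $-I$ is central. The closing remark about independence of the choice of $c$ is harmless (all non-separating curves on the torus are equivalent under $\mathcal{M}_1=\mathcal{H}_1$), and matches the paper's implicit reduction to $c=c_3$.
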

\begin{proof}[Proof of Proposition \ref{prop:generating set} (i)]
Let $\sigma\in\mathcal{M}_{0,\half}^{2g}$ denote the half twist along $\partial D^2$.
By the exact sequence in Lemma \ref{lem:M_{0,half}},
the group $\mathcal{M}_{0,\half}^{2g}$ is generated by $\{\sigma_1,\cdots,\sigma_{2g-1},\sigma\}$.
By \cite[Theorem 2]{birman69mcc},
we have $\mathcal{P}_g(t_{c_i})=\sigma_i$ for $i=1,\cdots,2g$ and $\mathcal{P}_g(t_{c_{2g+1}})=\sigma$.
By the exact sequence in Lemma \ref{lem:H_g^s(c)},
the group $\mathcal{H}_g(c)$ is generated by $t_{c_i}$ for $i=1,2,\cdots,2g-1, 2g+1$ and $\iota_g$.
\end{proof}

\subsection{When $c$ is separating}\label{section:sep}

Next, consider the case when $c$ is type $\II_h$. For symplicity, we choose $c$ as in Figure \ref{fig:vanishing cycles}.

As we will see in Section \ref{section:round cobordisms},
when the vanishing cycle of $Z_i$ in the hyperelliptic directed BLF is separating,
the image of the monodromy representation along $\partial_0 A_i$ is contained in $\mathcal{H}_g(c^{\ori})$.
Hence, we only consider the group $\mathcal{H}_g(c^{\ori})$ in this section instead of $\mathcal{H}_g(c)$.
Of course, if $g\ne2h$, we have $\mathcal{H}_g(c)=\mathcal{H}_g(c^{\ori})$ since any diffeomorphism of $\Sigma_g$ which preserves $c$ setwise acts trivially on $\pi_0(\Sigma_g-c)$.

First, consider the case when $h=0,g$.
For any diffeomorphism $T$ of $\Sigma_g$,
we can change $T$ so that it preserves $c$ setwise by some isotopy.
Thus, we have $\mathcal{H}_g(c^{\ori})=\mathcal{H}_g$.

In the following, we only consider the case $1\le h\le g-1$.
Choose a disk $D$ in $\Sigma_g-\bigcup_{i=1}^{2g}c_i$ so that $\iota_g(D)=D$, where $c_i$ is the simple closed curve in Figure \ref{fig:scc}.
Denote by $\Sigma_{g,1}$ the subsurface $\Sigma_g-\Int D$, and by $\iota_{g,1}$ the restriction of $\iota_g$ to $\Sigma_{g,1}$.
The mapping class group $\mathcal{M}_{g,1}$ of $\Sigma_{g,1}$ is defined by $\mathcal{M}_{g,1}=\pi_0\Diff_+(\Sigma_{g,1},\partial\Sigma_{g,1})$, 
where $\Diff_+(\Sigma_{g,1},\partial\Sigma_{g,1})$ is the diffeomorphism group of $\Sigma_{g,1}$ with $C^\infty$ topology which fixes the boundary pointwise.

We identify the subsurfaces of $\Sigma_g$ bounded by $c$ with $\Sigma_{h,1}$ and $\Sigma_{g-h,1}$
so that $\iota_g|_{\Sigma_{h,1}}=\iota_{h,1}$ and $\iota_g|_{\Sigma_{g-h,1}}=\iota_{g-h,1}$.
For $T_1\in \Diff_+(\Sigma_{h,1},\partial\Sigma_{h,1})$ and $T_2\in \Diff_+(\Sigma_{g-h,1},\partial\Sigma_{g-h,1})$, the diffeomorphism $T_1\cup T_2\in\Diff_+\Sigma_g$ preserves the curve $c$. Hence, we can define a map
\[
\Psi:\mathcal{M}_{h,1}\times \mathcal{M}_{g-h,1}\to\mathcal{M}_g(c^{\ori})
\]
by $\Psi([T_1],[T_2])=[T_1\cup T_2]$. This is a well-defined homomorphism.

Define a subgroup $\mathcal{H}_{g,1}$ of $\mathcal{M}_{g,1}$ by $\mathcal{H}_{g,1}=\{[T]\in\mathcal{M}_{g,1}\,|\,\iota_{g,1}T\iota_{g,1}^{-1}=T\}$. Apparently, the image $\Psi(\mathcal{H}_{h,1}\times \mathcal{H}_{g-h,1})$ is contained in the subgroup $\mathcal{H}_g(c^{\ori})\subset \mathcal{M}_g(c^{\ori})$.

\begin{lemma}\label{lem:sep-generator}
Let $g\ge2$.
When $1\le h\le g-1$, the sequence
\[
\begin{CD}
1@>>>\mathbb{Z}@>>>\mathcal{H}_{h,1}\times\mathcal{H}_{g-h,1}@>\Psi>>\mathcal{H}_g(c^{\ori})@>>>1
\end{CD}
\]
is exact.
\end{lemma}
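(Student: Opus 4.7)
The plan is to prove surjectivity of $\Psi$ onto $\mathcal{H}_g(c^{\ori})$ and then identify $\ker\Psi$ as an infinite cyclic group generated by $(t_{\partial\Sigma_{h,1}},t_{\partial\Sigma_{g-h,1}}^{-1})$.

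For surjectivity, I would start with $[T]\in\mathcal{H}_g(c^{\ori})$ and invoke Lemma \ref{lem:isomH_g(c)} to choose a representative $T\in C(\iota_g)$ with $T(c)=c$. Since the orientation of $c$ is an isotopy invariant of the class, $T|_c$ is an orientation-preserving self-diffeomorphism of $c$ commuting with the involution $\iota_g|_c$ (rotation by $\pi$ with two fixed points). The group of such diffeomorphisms of $S^1$ is path-connected, so an equivariant isotopy on $c$ connects $T|_c$ to $\id_c$. Extending this isotopy $\iota_g$-equivariantly to a tubular neighborhood of $c$ and then by the identity outside, and following with a further equivariant normalization in the normal direction (which is possible because $T$ preserves the orientations of $\Sigma_g$ and $c$, hence of the normal bundle of $c$), one arranges that $T$ is the identity on some $\iota_g$-invariant tubular neighborhood of $c$. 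The restrictions $T|_{\Sigma_{h,1}}$ and $T|_{\Sigma_{g-h,1}}$ then lie in $C(\iota_{h,1})$ and $C(\iota_{g-h,1})$, fix their boundaries pointwise, and represent classes in $\mathcal{H}_{h,1}$ and $\mathcal{H}_{g-h,1}$ whose image under $\Psi$ is $[T]$.

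For the kernel, I would use the Farb--Margalit inclusion homomorphism theorem (\cite[Theorem 3.18]{farb2011pmcg}): each inclusion $\iota_1\colon\mathcal{M}_{h,1}\to\mathcal{M}_g$ and $\iota_2\colon\mathcal{M}_{g-h,1}\to\mathcal{M}_g$ is injective, since each subsurface has a single boundary component which is essential in $\Sigma_g$ (as $1\le h\le g-1$) and there are no parallel boundary components to identify. Assuming $\Psi([T_1],[T_2])=1$, the extensions-by-identity of $T_1$ and $T_2$ have disjoint supports and commute in $\mathcal{M}_g$, so $\iota_1([T_1])=\iota_2([T_2]^{-1})$ lies in $\Im\iota_1\cap\Im\iota_2$. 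Compressing two such representatives to a common regular neighborhood of $c$ and using that the mapping class group of an annulus is $\mathbb{Z}$, this intersection equals $\langle t_c\rangle$. Since $\iota_j$ sends the boundary Dehn twist to $t_c$, injectivity gives $[T_1]=t_{\partial\Sigma_{h,1}}^n$ and $[T_2]=t_{\partial\Sigma_{g-h,1}}^{-n}$ for some $n\in\mathbb{Z}$. Conversely, the element $(t_{\partial\Sigma_{h,1}},t_{\partial\Sigma_{g-h,1}}^{-1})$ lies in $\ker\Psi$ (its image is $t_c\cdot t_c^{-1}=1$) and in $\mathcal{H}_{h,1}\times\mathcal{H}_{g-h,1}$, since each boundary Dehn twist is fixed under conjugation by the hyperelliptic involution (which preserves the boundary curve with orientation).

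The main obstacle will be the equivariant normalization step in the surjectivity argument: every isotopy used to straighten $T$ near $c$ must commute with $\iota_g$. This is handled by the equivariant isotopy extension theorem already underlying the Birman--Hilden theory exploited in Lemma \ref{lem:isomH_g(c)}. A secondary technical point is the identification $\Im\iota_1\cap\Im\iota_2=\langle t_c\rangle$, but this follows from standard facts on mapping classes supported in subsurfaces and the structure of the mapping class group of an annulus.
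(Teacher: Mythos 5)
Your proposal is correct and follows essentially the same route as the paper: the kernel is identified with $\langle(t_{\partial\Sigma_{h,1}},t_{\partial\Sigma_{g-h,1}}^{-1})\rangle$ via the Farb--Margalit exact sequence for $\mathcal{M}_{h,1}\times\mathcal{M}_{g-h,1}\to\mathcal{M}_g(c^{\ori})$ and observed to lie in the hyperelliptic product, while surjectivity is obtained from Lemma \ref{lem:isomH_g(c)} by choosing an $\iota_g$-equivariant representative preserving $c$ and normalizing it to be the identity near $c$ (you spell out the equivariant isotopy that the paper leaves implicit). One cosmetic slip: for a separating $c$ of type $\II_h$ the restriction $\iota_g|_c$ is a free rotation by $\pi$, not one with two fixed points, but the connectedness argument you need goes through unchanged.
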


\begin{proof}
By \cite[Theorem 3.18]{farb2011pmcg}, we have
\[
\begin{CD}
1@>>>\mathbb{Z}@>>>\mathcal{M}_{h,1}\times\mathcal{M}_{g-h,1}@>\Psi>>\mathcal{M}_g(c^{\ori})@>>>1.
\end{CD}
\]
The kernel of $\Psi$ is generated by $(t_{\partial\Sigma_1}, t_{\partial\Sigma_2}^{-1})$,
and it is contained in $\mathcal{H}_{h,1}\times\mathcal{H}_{g-h,1}$.
Thus, we only need to prove $\Psi(\mathcal{H}_{h,1}\times\mathcal{H}_{g-h,1})=\mathcal{H}_g(c^{\ori})$.

Let $\varphi$ be a mapping class in $\mathcal{H}_g(c)$.
By Lemma \ref{lem:isomH_g(c)},
we can choose a representative $T\in\Diff_+\Sigma_g$ of $\varphi$
satisfying $T\iota_g=\iota_g T$ and $T(c)=c$.
Using some isotopy, we may assume $T|_{c}$ is the identity map.
Then, $T|_{\Sigma_{h,1}}$ and $T|_{\Sigma_{g-h,1}}$ represent mapping classes
in $\mathcal{H}_{h,1}$ and $\mathcal{H}_{g-h,1}$, respectively. 
Since $\Psi([T|_{\Sigma_{h,1}}], [T|_{\Sigma_{g-h,1}}])=[T]$,
we obtain $\Psi(\mathcal{H}_{h,1}\times\mathcal{H}_{g-h,1})=\mathcal{H}_g(c^{\ori})$.
\end{proof}

Let $C(\iota_{g,1})$ be the group defined by
$C(\iota_{g,1})=\{T\in\Diff_+(\Sigma_{g,1},\partial\Sigma_{g,1})\,|\, \iota_{g,1}T\iota_{g,1}^{-1}=T\}$.
We have the homomorphism $\mathcal{P}_{g,1}:\pi_0(C(\iota_{g,1}))\to \mathcal{M}_{0,1}^{2g+1}$
defined by $[T]\mapsto [\bar{T}]$ in the same way as $\mathcal{P}_g:\mathcal{H}_g^s\to\mathcal{M}_0^{2g+2}$ in Section \ref{section:hyp-MCG}.
Since any isotopy of $\Diff_+(D^2,\partial D^2,\{p_1,\cdots,p_{2g+1}\})$ can be lifted to an isotopy of $C(\iota_{g,1})$, $\Ker(\mathcal{P}_{g,1})$ is represented by the deck transformation $\iota_{g,1}$ or $\id_{\Sigma_{g,1}}$.
Since $C(\iota_{g,1})$ does not contain $\iota_{g,1}$,
the kernel of the homomorphism $\mathcal{P}_{g,1}$ is trivial. 
Furthermore, $\mathcal{P}_{g,1}: \pi_0C(\iota_{g,1})\to \mathcal{M}_{0,1}^{2g+1}$ is an isomorphism since $\mathcal{M}_{0,1}^{2g+1}$ is generated by $\{\sigma_i\}_{i=1}^{2g}$ and $\mathcal{P}_{g,1}(t_{c_i})=\sigma_i$ for $i=1,\cdots,2g$.

\begin{lemma}\label{lem:H_{g,1}-isom}
For $g\ge1$, the natural homomorphism $\pi_0C(\iota_{g,1})\to \mathcal{H}_{g,1}$ is an isomorphism.
\end{lemma}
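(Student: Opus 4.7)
The plan is to verify injectivity and surjectivity of the natural map $\pi_0 C(\iota_{g,1}) \to \mathcal{H}_{g,1}$ separately, leveraging the already-established isomorphism $\mathcal{P}_{g,1}\colon \pi_0 C(\iota_{g,1}) \xrightarrow{\cong} \mathcal{M}_{0,1}^{2g+1}$ together with the closed-surface analog Lemma \ref{lem:isomH_g(c)}.

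For surjectivity, given $\varphi \in \mathcal{H}_{g,1}$ with representative $T$, I would extend $T$ by the identity on the cap disk $D$ to obtain $\tilde T \in \Diff_+\Sigma_g$. Since $\iota_g(D) = D$ and $\iota_{g,1} T \iota_{g,1}^{-1}$ is isotopic to $T$ rel $\partial\Sigma_{g,1}$, the class $[\tilde T]$ lies in $\mathcal{H}_g(\partial D^{\ori})$. Lemma \ref{lem:isomH_g(c)} applied with $c = \partial D$ then supplies a representative $T' \in C(\iota_g)$ of $[\tilde T]$ with $T'(\partial D) = \partial D$. An $\iota_g$-equivariant isotopy on $D$, using path-connectedness of the space of $\iota_g|_D$-equivariant diffeomorphisms of $D$ that preserve $\partial D$ setwise and the unique branch point, allows us to assume $T'|_D = \id_D$, so that $T'|_{\Sigma_{g,1}} \in C(\iota_{g,1})$. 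Its mapping class in $\mathcal{M}_{g,1}$ agrees with $\varphi$ up to the kernel of the capping-off homomorphism $\mathcal{M}_{g,1} \to \mathcal{M}_g$, which is generated by $t_{\partial\Sigma_{g,1}}$; multiplying $T'|_{\Sigma_{g,1}}$ by the appropriate power of $t_{\partial\Sigma_{g,1}}$, an element that also lies in $C(\iota_{g,1})$, yields the desired symmetric representative of $\varphi$.

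For injectivity, suppose $T \in C(\iota_{g,1})$ is isotopic to $\id$ in $\Diff_+(\Sigma_{g,1}, \partial\Sigma_{g,1})$. Since $\mathcal{P}_{g,1}$ is an isomorphism, it suffices to show that $\bar T$ is isotopic to $\id$ in $\Diff_+(D^2, \partial D^2, \{p_1,\ldots,p_{2g+1}\})$, for then $[T] = \mathcal{P}_{g,1}^{-1}[\bar T] = 0$. A free isotopy $T_s$ from $\id$ to $T$ need not commute with $\iota_{g,1}$ at intermediate times, but Birman-Hilden's isotopy-lifting technique for branched covers, adapted to the bordered setting (cf.\ \cite{birman1973ihr}), allows $T_s$ to be replaced by an $\iota_{g,1}$-equivariant isotopy; the fact that $C(\iota_{g,1})$-elements fix $\partial\Sigma_{g,1}$ pointwise ensures no obstruction arises from the deck-transformation action on the boundary. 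Projecting this equivariant isotopy to the quotient produces the required isotopy of $\bar T$ to $\id$.

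The main obstacle I anticipate is the bookkeeping in the surjectivity step, specifically combining the equivariant cleanup on the cap $D$ with the $t_{\partial\Sigma_{g,1}}$ correction so that the resulting class in $\mathcal{M}_{g,1}$ is exactly $\varphi$, rather than merely its image in $\mathcal{M}_g$. The injectivity step, by contrast, reduces cleanly to a standard equivariant-isotopy argument once $\mathcal{P}_{g,1}$ is in hand.
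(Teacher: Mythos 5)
Your proposal has genuine gaps in both halves, and neither half follows the paper's route. For surjectivity: with the paper's definition $\mathcal{H}_{g,1}=\{[T]\in\mathcal{M}_{g,1}\mid \iota_{g,1}T\iota_{g,1}^{-1}=T\}$, where the relation is required of an actual representative $T$, surjectivity is immediate and the paper dispatches it in one sentence. Your capping argument, besides being unnecessary, does not work as written: Lemma \ref{lem:isomH_g(c)} is stated only for \emph{essential} simple closed curves, and $\partial D$ bounds a disk in $\Sigma_g$ (so $\mathcal{H}_g(\partial D^{\ori})$ is all of $\mathcal{H}_g$ and passing to it retains no information about the behaviour near $D$). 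Moreover, for $g\ge 2$ the kernel of the capping homomorphism $\mathcal{M}_{g,1}\to\mathcal{M}_g$ is not $\langle t_{\partial\Sigma_{g,1}}\rangle$ but a copy of the fundamental group of the unit tangent bundle of $\Sigma_g$ (the Birman exact sequence), so your final correction by a power of $t_{\partial\Sigma_{g,1}}$ cannot recover $\varphi$ from its image in $\mathcal{M}_g$; tracking the branch point in $D$ would fix the kernel but would then require a marked-point analogue of Lemma \ref{lem:isomH_g(c)} that the paper does not provide.

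The real content of the lemma is injectivity, and here your argument begs the question. The assertion that a free isotopy from $T$ to the identity rel $\partial\Sigma_{g,1}$ can be replaced by an $\iota_{g,1}$-equivariant one is precisely the injectivity of $\pi_0C(\iota_{g,1})\to\mathcal{H}_{g,1}$ (an equivariant isotopy descends to the quotient, and conversely its existence is what injectivity means); calling it ``Birman--Hilden's technique adapted to the bordered setting'' defers the entire difficulty to an unproved adaptation, since the only Birman--Hilden statement available in the paper is the closed-surface Theorem \ref{thm:hypMCG}. The point of the paper's proof is exactly to reduce the bordered case to the closed one: embed $\Sigma_{g,1}$ equivariantly into $\Sigma_{g+1}$, note that $\mathcal{M}_{0,1}^{2g+1}\to\mathcal{M}_0^{2g+4}$ is injective, deduce from the commutative square involving $\mathcal{P}_{g,1}$ and $\mathcal{P}_{g+1}$ that $\pi_0C(\iota_{g,1})\to\pi_0C(\iota_{g+1})$ is injective, and then conclude using Theorem \ref{thm:hypMCG} applied to $\Sigma_{g+1}$. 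To repair your outline you must either supply a proof of the bordered isotopy-lifting statement or carry out some such reduction to the closed case.
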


\begin{proof}
By the definition of $\mathcal{H}_{g,1}$,
the natural homomorphism $\pi_0(C(\iota_{g,1}))\to \mathcal{H}_{g,1}$ is surjective.
Hence, it suffices to show the injectivity.

Embed $\Sigma_{g,1}$ in $\Sigma_{g+1}$ so that $\iota_{g+1}|_{\Sigma_{g,1}}=\iota_{g,1}$.
For a diffeomorphism $T$ of $\Sigma_{g,1}$, we can extend $T$ to a diffeomorphism $\tilde{T}$ of $\Sigma_{g+1}$ by the identity map on $\Sigma_{g+1}\setminus\Sigma_{g,1}$.
Thus, we have homomorphisms $\pi_0(C(\iota_{g,1}))\to \pi_0(C(\iota_{g+1}))$ and $\mathcal{H}_{g,1}\to\mathcal{H}_{g+1}$ defined by $[T] \mapsto [\tilde{T}]$.
By gluing a disk with three marked points to $D^2$,
we can also define a homomorphism $\mathcal{M}_{0,1}^{2g+1}\to\mathcal{M}_{0}^{2g+4}$ in the same way.
By  in \cite[Theorem 3.18]{farb2011pmcg},
the latter homomorphism is injective.

If we consider $(\Sigma_{g+1}\setminus\Int \Sigma_{g,1})/\braket{\iota_{g+1}}$ as a disk with three marked points,
we have a commutative diagram
\[
\begin{CD}
\mathcal{M}_{0,1}^{2g+1}@<\mathcal{P}_{g,1}<<\pi_0C(\iota_{g,1})@>>>\mathcal{H}_{g,1}\\
@VVV@VVV@VVV\\
\mathcal{M}_0^{2g+4}@<\mathcal{P}_{g+1}<<\pi_0C(\iota_{g+1})@>>>\mathcal{H}_{g+1}.
\end{CD}
\]
The left side shows that $\pi_0C(\iota_{g,1})\to \pi_0C(\iota_{g+1})$ is injective. 
By Theorem \ref{thm:hypMCG}, the right side shows that $\pi_0C(\iota_{g,1})\to\mathcal{H}_{g,1}$ is also injective.
\end{proof}

\begin{lemma}\label{lem:abelianization sep}
Let $g\ge2$ and $1\le h\le g-1$.
Let $c$ be a separating simple closed curve which bounds subsurfaces of genus $h$ and $g-h$ and satisfies $\iota_g(c)=c$. 
Then, we have
\[
H_1(\mathcal{H}_g(c^{\ori});\mathbb{Z})\cong \mathbb{Z}\oplus \mathbb{Z}/d\mathbb{Z},
\]
where $d=\gcd(4h(2h+1), 4(g-h)(2g-2h+1))$. 
\end{lemma}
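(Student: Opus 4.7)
The plan is to apply abelianization to the short exact sequence of Lemma \ref{lem:sep-generator}:
\[
1\longrightarrow \mathbb{Z}\longrightarrow \mathcal{H}_{h,1}\times\mathcal{H}_{g-h,1}\stackrel{\Psi}{\longrightarrow}\mathcal{H}_g(c^{\ori})\longrightarrow 1,
\]
whose kernel is generated by $\kappa=(t_{\partial\Sigma_{h,1}},\,t_{\partial\Sigma_{g-h,1}}^{-1})$. Abelianization is right exact on any short exact sequence of groups, so $H_1(\mathcal{H}_g(c^{\ori});\mathbb{Z})$ is the quotient of $H_1(\mathcal{H}_{h,1};\mathbb{Z})\oplus H_1(\mathcal{H}_{g-h,1};\mathbb{Z})$ by the cyclic subgroup generated by the image of $\kappa$. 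So the work splits into computing each $H_1(\mathcal{H}_{k,1};\mathbb{Z})$ and then locating $t_{\partial\Sigma_{k,1}}$ in it.

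First I would identify $\mathcal{H}_{k,1}$ with the braid group $\mathcal{M}_{0,1}^{2k+1}\cong B_{2k+1}$ via the Birman-Hilden isomorphism $\mathcal{P}_{k,1}$ established just before Lemma \ref{lem:H_{g,1}-isom}. Then $H_1(\mathcal{H}_{k,1};\mathbb{Z})\cong\mathbb{Z}$, with $[t_{c_j}]$ mapping to $1$ for each Dehn twist generator, because the Artin generators $\sigma_j$ all abelianize to $1$ in $B_{2k+1}$ and $\mathcal{P}_{k,1}(t_{c_j})=\sigma_j$. The key step is to compute the class of $t_{\partial\Sigma_{k,1}}$ in this $\mathbb{Z}$. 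The restriction of the branched double cover $\Sigma_{k,1}\to D^2$ to $\partial\Sigma_{k,1}\to\partial D^2$ is the connected degree-$2$ covering of the circle, on which the deck involution acts antipodally. Choosing an $\iota_{k,1}$-equivariant model of $t_{\partial\Sigma_{k,1}}$ supported in a symmetric collar of $\partial\Sigma_{k,1}$, a rotation by $2\pi$ on the cover descends to a rotation by $4\pi$ on the base, so $\mathcal{P}_{k,1}(t_{\partial\Sigma_{k,1}})=t_{\partial D^2}^{2}$. Since $t_{\partial D^2}=(\sigma_1\cdots\sigma_{2k})^{2k+1}$ in $B_{2k+1}$, this element abelianizes to $2\cdot 2k(2k+1)=4k(2k+1)$, giving $[t_{\partial\Sigma_{k,1}}]=4k(2k+1)$.

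Putting the pieces together with $a=4h(2h+1)$ and $b=4(g-h)(2(g-h)+1)$, the image of $\kappa$ in $H_1(\mathcal{H}_{h,1};\mathbb{Z})\oplus H_1(\mathcal{H}_{g-h,1};\mathbb{Z})\cong\mathbb{Z}^2$ is $(a,-b)$, and a Smith normal form computation yields
\[
H_1(\mathcal{H}_g(c^{\ori});\mathbb{Z})\;\cong\;\mathbb{Z}^2/\mathbb{Z}\langle(a,-b)\rangle\;\cong\;\mathbb{Z}\oplus\mathbb{Z}/\gcd(a,b)\mathbb{Z}=\mathbb{Z}\oplus\mathbb{Z}/d\mathbb{Z}.
\]
The main obstacle will be the factor-of-two identification $\mathcal{P}_{k,1}(t_{\partial\Sigma_{k,1}})=t_{\partial D^2}^{2}$: one has to build a concrete $\iota_{k,1}$-invariant representative of the boundary Dehn twist and track how its winding doubles under the quotient by the antipodal deck involution on $\partial\Sigma_{k,1}$, since an incorrect factor here would replace $4k(2k+1)$ by $2k(2k+1)$ and spoil the formula for $d$.
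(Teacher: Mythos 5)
Your proposal is correct and follows the same skeleton as the paper's proof: abelianize the sequence of Lemma \ref{lem:sep-generator}, use $\mathcal{H}_{k,1}\cong\mathcal{M}_{0,1}^{2k+1}$ to get $H_1(\mathcal{H}_{k,1};\mathbb{Z})\cong\mathbb{Z}$, locate the boundary twist, and take Smith normal form of $(a,-b)$. The one step you do differently is the computation of $[t_{\partial\Sigma_{k,1}}]$: the paper invokes the chain relation $(t_{c_1}\cdots t_{c_{2k}})^{4k+2}=t_{\partial\Sigma_{k,1}}$ upstairs, which immediately exhibits the boundary twist as a product of $2k(4k+2)=4k(2k+1)$ generators, whereas you push down through $\mathcal{P}_{k,1}$ and argue geometrically that an equivariant collar twist on the connected double cover $\partial\Sigma_{k,1}\to\partial D^2$ descends to $t_{\partial D^2}^{2}=(\sigma_1\cdots\sigma_{2k})^{2(2k+1)}$. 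Your covering-space argument for the factor of two is sound (the antipodal deck action on the boundary circle commutes with the collar twist, and the degree-$2$ map doubles the winding), and it is in fact equivalent to the chain relation under the isomorphism $\mathcal{P}_{k,1}$; the chain relation is the quicker citation, but your route makes the geometric origin of the coefficient $4k(2k+1)$ more transparent. Both give the same relation vector and hence the same $d=\gcd\bigl(4h(2h+1),\,4(g-h)(2(g-h)+1)\bigr)$.
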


\begin{proof}
Since $\mathcal{H}_{h,1}\cong \mathcal{M}_{0,1}^{2h+1}$,
we have $H_1(\mathcal{H}_{h,1};\mathbb{Z})\cong\mathbb{Z}$.
By the chain relation (see, for example, in \cite[Proposition 4.12]{farb2011pmcg}),
the mapping class $(t_{c_1}\cdots t_{c_{2h}})^{4h+2}\in\mathcal{H}_{h,1}$
coincides with the Dehn twist $t_{\partial \Sigma_{h,1}}$ along the boundary.
In the same way,
we have $(t_{c_1}\cdots t_{c_{2(g-h)}})^{4(g-h)+2}=t_{\partial \Sigma_{g-h,1}}\in\mathcal{H}_{g-h,1}$.

The kernel of the homomorphism $\mathcal{H}_{h,1}\times\mathcal{H}_{g-h,1}\to\mathcal{H}_g(c^{\ori})$ is
the cyclic group generated by $(t_{\partial\Sigma_{h,1}},t_{\partial\Sigma_{g-h,1}}^{-1})$.
Hence, we have
\[
H_1(\mathcal{H}_g(c^{\ori});\mathbb{Z})\cong(\mathbb{Z}\oplus \mathbb{Z})/\braket{(4h(2h+1), -4(g-h)(2g-2h+1))}\cong\mathbb{Z}\oplus \mathbb{Z}/d\mathbb{Z}.
\]
\end{proof}

\begin{proof}[Proof of Proposition \ref{prop:generating set} (ii)]
As explained in the paragraph before Lemma \ref{lem:H_{g,1}-isom},
$\mathcal{H}_{g,1}$ is generated by $t_{c_1},\cdots, t_{c_{2g}}$.
Thus, $\mathcal{H}_g(c)$ is generated by
$t_{c_1}, t_{c_2}, \cdots t_{c_{2h}}, t_{c_{2h+2}}, t_{c_{2h+3}},\cdots, t_{c_{2g+2}}$
by Lemma \ref{lem:sep-generator}.
\end{proof}

\section{Localization of the signature of directed BLFs}\label{section:local signature}

In this section, we compute the signature of $f^{-1}(A_i)$,
and show that the signature of hyperelliptic directed BLF localizes.
In Section \ref{section:round cobordisms} and Section \ref{section:non-additivity},
we calculate the signature of round cobordisms.
In Section \ref{section:homomorphism h_{g,c}}, we define a homomorphism $h_{g,c}$ for a simple closed curve $c$,
and prove Proposition \ref{prop:values of h}.
In Section \ref{section:localization}, we will prove Theorem \ref{section:localization}.

\subsection{Lemmas on round cobordisms}\label{section:round cobordisms}

We use the notation in Introduction and Section \ref{section:BLF}. 
Let $f: M\to S^2$ be a directed BLF.
In \cite[Lemma 2.2]{baykur2007tbl},
Baykur observed that $M_i$ in a directed BLF is obtained by gluing a round 2-handle to a surface bundle over an annulus.
We review his observation and investigate the signature of $f^{-1}(A_i)$.

\begin{lemma}[Baykur \cite{baykur2007tbl}]\label{lem:baykur}
Let $f:M\to S^2$ be a directed BLF.
Identifying $f^{-1}(r_i)\cap M_i$ with the surface $\Sigma_{g_i}$,
consider the vanishing cycle $d_i$ of $Z_i$ is in $\Sigma_{g_i}$.
Then, the monodromy $\varphi$ of $f^{-1}(r_i)\cap M_i$ along $\partial_0 A_i$ is in $\mathcal{M}_{g_i}(d_i)$.
\end{lemma}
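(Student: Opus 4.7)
The plan is to unpack Baykur's round handle description of $M_i$, which is recalled in the paragraph above the lemma, and read off from it the required constraint on the monodromy $\varphi$. First I would write $M_i$, following Baykur, as a $\Sigma_{g_i}$-bundle $N$ over a collar of $\partial_0 A_i$ glued to a round $2$-handle $R$ along a torus $T \subset \partial N$ lying in the boundary mapping torus $f^{-1}(\partial_0 A_i) \cap M_i$. From the local normal form $(t,x_1,x_2,x_3) \mapsto (t, x_1^2 + x_2^2 - x_3^2)$ of the indefinite fold along $Z_i$, one reads off that $T$ is an $S^1$-family of copies of the vanishing cycle, parametrized by $Z_i$: in the local chart the attaching circles in each fiber are the standard vanishing cycles of the fiberwise handle attachment, and these assemble into the torus $T$ over $Z_i$.

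The key observation is then that, via the identification of $f^{-1}(\partial_0 A_i) \cap M_i$ with the mapping torus of $\varphi$ acting on $\Sigma_{g_i}$, the torus $T$ sits inside this mapping torus as a sub-bundle over the base circle $\partial_0 A_i$, with fiber the circle $d_i \subset \Sigma_{g_i}$. Transporting this fiber circle once around $\partial_0 A_i$ by the ambient monodromy $\varphi$ must return it to itself, which is exactly the statement $\varphi(d_i) = d_i$ up to isotopy, i.e.\ $[\varphi] \in \mathcal{M}_{g_i}(d_i)$.

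The main thing that needs care is the compatibility between two identifications of the fiber with $\Sigma_{g_i}$: the one used in the statement to single out $d_i$ as a curve on $\Sigma_{g_i}$, and the one implicit in Baykur's round handle construction. Once the local normal form is followed through, these are seen to be compatible because the core circle of the round handle sits canonically over $Z_i$ and meets the reference fiber $f^{-1}(r_i) \cap M_i$ in the vanishing cycle $d_i$ as originally defined; after this identification is established, the sub-bundle argument above makes the conclusion automatic.
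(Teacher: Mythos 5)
Your argument is correct and takes essentially the route the paper relies on: the paper does not prove this lemma itself (it is quoted from Baykur), but its Lemma \ref{lem:M_i} presents $M_i$ as a round $2$-handle attached to $V_\varphi\times[0,1]$ along an $S^1$-family of vanishing cycles sitting over the fold circle, and this sub-bundle of the mapping torus is exactly what you use to force $\varphi(d_i)\simeq d_i$. The only small imprecision is that this family is a Klein bottle rather than a torus when $\varphi$ reverses the orientation of $d_i$ (compare the two cases in the definition of $\sim_\varphi$), but since the lemma asserts only membership in $\mathcal{M}_{g_i}(d_i)$ and not in $\mathcal{M}_{g_i}(d_i^{\ori})$, this does not affect your conclusion.
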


Moreover, when $d_i$ is a separating curve, the monodromy $\varphi$ is in $\mathcal{M}_{g_i}(d_i^{\ori})$.
This is because, if $\varphi$ changes the orientation of $d_i$,
the monodromy along $\partial_1 A_i$ permutes the component of $f^{-1}(\gamma_{i+1})$.
Inductively, the monodromy along $\partial_1 A_m$ permutes the component of $f^{-1}(\gamma_{m+1})$.
However, since $f^{-1}(D_l)$ is a trivial surface bundle over a disk, it must be trivial. 

In the paragraph after Lemma \ref{lem:isomH_g(c)},
we defined the homomorphisms $\Phi_n: \mathcal{M}_{g_i}(c)\to \mathcal{M}_{g_i-1}$ and $\Phi_s: \mathcal{M}_{g_i}(c^{\ori})\to \mathcal{M}_h\times\mathcal{M}_{g_i-h}$.
Baykur \cite{baykur2007tbl} also observed that the monodromy of $f^{-1}(r_i)\cap M_i$ along $\partial_1 A_i$ is $\Phi_n(\varphi)$ when $d_i$ is non-separating and $\Phi_s(\varphi)$ when $d_i$ is separating, for a suitable identification of $f^{-1}(r_{i+1})\cap M_{i}$ with $\Sigma_{g_i-1}$ if $d_i$ is type $\I$ and with $\Sigma_h \cup \Sigma_{g_i-h}$ if $d_i$ is type $\II_h$.

For a mapping class $\varphi\in\mathcal{M}_g(c)$ represented by $T\in\Diff_+\Sigma_g$ satisfying $T(c)=c$,
define a mapping torus $V_\varphi$ by $V_\varphi=\Sigma_g\times [0,1]/((0,T(x))\sim (1,x))$.
We can identify $f^{-1}(\partial_0 A_i)\cap M_i$ with $V_\varphi$ for some $\varphi\in\mathcal{M}_{g_i}(d_i)$.
Identifying $D^2$ with the unit disk in $\mathbf{C}$,
define an equivalence relation $\sim_\varphi$ on $D^2\times [-1,1]\times [0,1]$ by
\[
\begin{cases}
(v,s,1)\sim (v,s,0),&\text{ if }\varphi\text{ preserves the orientation of }c,\\
(v,s,1)\sim (\bar{v},-s,0),&\text{ if }\varphi\text{ reverses the orientation of }c,
\end{cases}
\]
where $\bar{v}$ is the complex conjugate of $v$.
The compact 4-manifold $R$ defined by $R=D^2\times [-1,1]\times [0,1]/\sim_\varphi$ is called a round 2-handle. 
Choose an embedding $j:\partial D^2\times [-1,1]\times [0,1]/\sim_\varphi\to V_{\varphi}$ such that $j(\partial D^2,s,0)=c\times\{0\}\subset V_{\varphi}$ and $p_2 j(x,s,t)=t$,
where $p_2$ is the projection to the second factor.
Then, he observed:
\begin{lemma}[Baykur \cite{baykur2007tbl}]\label{lem:M_i}
\[
M_i\cong (V_\varphi\times[0,1])\cup R,
\]
for some embedding $j$ as above. 
\end{lemma}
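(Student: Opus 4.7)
The plan is to decompose $M_i$ along a tubular neighborhood of the indefinite fold circle $Z_i$, using the local normal form in Definition \ref{definition:BLF}(ii) on one side and the triviality of $f$ away from singularities on the other. Split $A_i$ as $\nu(Z_i)\cup (A_i\setminus\Int\nu(Z_i))$ where $\nu(Z_i)$ is a thin annular neighborhood of $Z_i$; the complement deformation retracts onto $\partial_0 A_i\cup\partial_1 A_i$. Write $C_0$ for the collar component of $A_i\setminus\Int\nu(Z_i)$ adjacent to $\partial_0 A_i$.

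First I would identify $f^{-1}(C_0)\cap M_i$ with $V_\varphi\times[0,1]$. Since $f$ has no singularities over $C_0$, this preimage is a $\Sigma_{g_i}$-bundle over an annulus, hence diffeomorphic to $V_\psi\times[0,1]$ for some $\psi\in\mathcal{M}_{g_i}$, and by definition of $\varphi$ as the monodromy along $\partial_0A_i$ we may take $\psi=\varphi$; by Lemma \ref{lem:baykur} we can choose a representative preserving $d_i$, so $V_\varphi$ is well-defined as a mapping torus in which $d_i\times S^1$ is embedded.

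Next I would analyze $N:=f^{-1}(\nu(Z_i))\cap M_i$ and show it is a round $2$-handle $R$. Using the local coordinates $(t,x_1,x_2,x_3)\mapsto (t,x_1^2+x_2^2-x_3^2)$, a saturated neighborhood of a point of $Z_i$ looks like $[-\delta,\delta]\times(\text{nbhd of origin in }\mathbb{R}^3)$. The preimage of a small annulus in the $(y_1,y_2)$-plane collapses onto a $D^2\times[-1,1]$ factor (where $D^2$ is the descending disk bounded by the vanishing cycle of the fold and $[-1,1]$ is the $y_2$-direction transverse to $Z_i$), producing locally a product $D^2\times[-1,1]\times[-\delta,\delta]$. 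Gluing these local charts along $Z_i$ yields a bundle over $S^1$ with fiber $D^2\times[-1,1]$, whose monodromy is determined by how the coordinates $(x_1,x_2,x_3)$ (equivalently the pair $(v,s)$) return to themselves after one loop around $Z_i$: if the vanishing cycle $d_i$ returns with its orientation preserved the monodromy is trivial, giving $(v,s,1)\sim(v,s,0)$, whereas if its orientation is reversed the $x_3$-direction flips and the frame on the descending disk becomes orientation-reversed, which in the $(v,s)$ coordinates is exactly $(v,s,1)\sim(\bar v,-s,0)$. This matches the two cases defining $\sim_\varphi$, so $N\cong R$.

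Finally I would exhibit the attaching embedding $j$. The common boundary $\partial N\cap f^{-1}(C_0)$ is a tubular neighborhood of the vanishing cycle circle $d_i\times\{0\}\subset V_\varphi$, swept out by $\partial D^2\times[-1,1]$ along $Z_i$; parametrizing this sweep by the $[0,1]$-direction of $R$ gives an embedding $j:\partial D^2\times[-1,1]\times[0,1]/\sim_\varphi\to V_\varphi$ with $j(\partial D^2,s,0)=d_i\times\{0\}$ and $p_2\circ j(x,s,t)=t$, as required. Putting the pieces together, $M_i=(V_\varphi\times[0,1])\cup_j R$. The main obstacle is the orientation bookkeeping in the second case of $\sim_\varphi$: one must verify that when $\varphi$ reverses the orientation of $d_i$, the reflection of the $[-1,1]$ factor together with complex conjugation of the disk is exactly the gluing forced by the local model, as opposed to some other involution; this follows from tracing the normal framing of $Z_i$ inside the $(x_1,x_2,x_3)$-slices and using that the indefinite fold fixes a preferred splitting of the normal bundle into positive-definite and negative-definite parts.
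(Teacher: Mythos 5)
The paper does not actually prove this lemma: it is quoted from Baykur \cite[Lemma 2.2]{baykur2007tbl}, so there is no in-paper argument to compare yours against. Your outline is essentially the standard (and presumably Baykur's) argument: split $A_i$ along a small annular neighborhood $\nu(Z_i)$ of the fold image, identify the part of $M_i$ lying over the collar on the $\partial_0 A_i$ side with $V_\varphi\times[0,1]$, and realize the crossing of the fold circle as a round $2$-handle attachment via the local normal form. The description of the attaching region as a neighborhood of $d_i\times\{0\}\subset V_\varphi$ and the dichotomy for the monodromy of the $D^2\times[-1,1]$-bundle over $Z_i$ (trivial versus $(v,s)\mapsto(\bar v,-s)$, forced by orientability of $M$ together with whether $\varphi$ preserves or reverses the orientation of $d_i$) are both correct.

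One intermediate claim is stated too strongly: $N=f^{-1}(\nu(Z_i))\cap M_i$ is \emph{not} diffeomorphic to the round handle $R$. Its boundary contains the mapping torus $V_\varphi$ and the surgered mapping torus, so it has far more topology than $D^2\times[-1,1]\times S^1$. What is true is that $N$ is diffeomorphic to $(V_\varphi\times[0,1])\cup_j R$, i.e.\ a product collar on the incoming boundary with a round $2$-handle attached; this is the parametrized Morse-theoretic statement that a circle of fiberwise index-$2$ critical points contributes a round $2$-handle to the cobordism. Your phrase ``collapses onto'' describes a deformation retraction, not the required diffeomorphism of the cobordism onto collar-union-handle, and the local charts you glue only cover a neighborhood of the descending disks, not all of $N$. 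Since the leftover pieces (the outgoing collar of $N$ and the product piece over the collar of $\partial_1 A_i$, which you do not mention) are product collars that get absorbed, the conclusion $M_i\cong(V_\varphi\times[0,1])\cup R$ still follows once this step is phrased as a round-handle decomposition of the cobordism $N$ rather than as an identification of $N$ with $R$ itself.
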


We remark that the isotopy class of the attaching map $j:\partial D^2\times [-1,1]\times [0,1]/\sim_T\to V_\varphi\times \{0\}$ is unique if the genus $g$ is greater than or equal to 2.

The signature of $f^{-1}(A_i)$ is calculated as follows.
Since the components of $f^{-1}(A_i)$ except $M_i$ are surface bundles over the annulus $A_i$,
we have $\Sign f^{-1}(A_i)=\Sign M_i$.
By Lemma \ref{lem:M_i}, we have:

\begin{lemma}\label{lem:signature}
\[
\Sign f^{-1}(A_i)=\Sign((V_\varphi\times[0,1])\cup R).
\]
\end{lemma}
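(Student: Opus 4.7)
My plan is to reduce the claim to showing that the components of $f^{-1}(A_i)$ other than $M_i$ contribute nothing to the signature, so that the equation follows directly from Lemma \ref{lem:M_i}.

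First, I would argue that $f^{-1}(A_i)$ decomposes as a disjoint union $M_i \sqcup E_i$ where $E_i$ is a disjoint union of surface bundles over $A_i$. The component $M_i$ is, by definition, the one containing the indefinite fold locus $Z_i$, which is the only critical set of $f$ inside $A_i$: Lefschetz singularities are contained in $f^{-1}(\Int D_h)$, and the other components of $f(Z_f)$ lie in the other annuli $A_j$ with $j\ne i$. Thus $f$ restricts to a submersion on each component of $E_i$, making each such component a $\Sigma_{k}$-bundle over $A_i$ for some genus $k$.

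Next, I would show that every $\Sigma_k$-bundle $E$ over the annulus $A_i \cong S^1 \times [0,1]$ satisfies $\Sign E = 0$. Such an $E$ is diffeomorphic to $V_\psi \times [0,1]$ where $V_\psi$ is the mapping torus of the monodromy $\psi$ along $S^1 \times \{0\}$. Since $V_\psi \times [0,1]$ is a trivial cobordism from $V_\psi$ to itself, a standard application of Novikov additivity (gluing $V_\psi \times [0,1]$ to any 4-manifold with boundary component $V_\psi$ yields a diffeomorphic manifold) forces $\Sign(V_\psi \times [0,1]) = 0$. Summing over the components of $E_i$ gives $\Sign E_i = 0$.

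Finally, additivity of signature under disjoint union yields
\[
\Sign f^{-1}(A_i) = \Sign M_i + \Sign E_i = \Sign M_i,
\]
and applying the diffeomorphism $M_i \cong (V_\varphi \times [0,1]) \cup R$ from Lemma \ref{lem:M_i} completes the proof. There is no substantive obstacle here; the only thing to verify carefully is the identification of the singular locus inside $A_i$ in step one, which is immediate from the definition of a directed BLF and the choice of the decomposition $S^2 = D_h \cup A_1 \cup \cdots \cup A_m \cup D_l$.
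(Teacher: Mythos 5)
Your proposal is correct and follows essentially the same route as the paper: the paper likewise observes that the components of $f^{-1}(A_i)$ other than $M_i$ are surface bundles over the annulus $A_i$, hence contribute zero signature, and then invokes Lemma \ref{lem:M_i}. Your extra justification that $\Sign(V_\psi\times[0,1])=0$ (via Novikov additivity on a trivial collar) only fills in a step the paper leaves implicit.
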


\subsection{Wall's non-additivity formula}\label{section:non-additivity}

In \cite{sato2008cfm}, the second author defined a class function $m:\mathcal{M}_{g,2}\to \mathbb{QP}^1$.
We review this function,
and calculate the signature of the compact 4-manifold $(V_\varphi\times[0,1])\cup R$ in Section \ref{section:round cobordisms}. 

For a mapping class $\varphi=[T]\in\mathcal{M}_{g,2}$,
let $V_\varphi'=\Sigma_{g,2}\times[0,1]/(0,T(x))\sim(1,x)$ be its mapping torus.
Choose points $x_1$ and $x_2$ in each boundary component of $\Sigma_{g,2}$,
and define a continuous map by $l_i:S^1\to V_\varphi'$ by $l_i(t)=(t,x_i)$ for $i=1,2$.
Let $\partial_1$ and $\partial_2$ be the two boundary components of $\Sigma_{g,2}$.
Denote by $e_1$, $e_2$, $e_3$, and $e_4$ the homology classes $[l_1]$, $[l_2]$,
$[\partial_1\times\{0\}]$, and $[\partial_2\times\{0\}]$, respectively.
Then, for some $p,q\in\mathbb{Q}$, the set $\{e_1+e_2,p(e_3-e_4)+qe_1\}$ forms a basis of $\Ker(H_1(\partial V_\varphi';\mathbb{Q})\to H_1(V_\varphi';\mathbb{Q}))$.
The element $[p:q]\in\mathbb{QP}^1$ is unique, and we can define a function $m:\mathcal{M}_{g,2}\to\mathbb{QP}^1$ by $m(\varphi)=[p:q]$.
Since it satisfies $m(\varphi t_{\partial_1}t_{\partial_2}^{-1})=m(\varphi)$, it induces the class function on $\mathcal{M}_{g}(c^{\ori})$.
For simplicity, we also denote it by $m:\mathcal{M}_{g}(c^{\ori})\to \mathbb{QP}^1$.

Define a map $s:\mathcal{M}_g(c)\to \mathbb{Z}$ by $s(\varphi)=\Sign((V_\varphi\times[0,1])\cup R)$. We can write the signature $s(\varphi)$ with the function $m:\mathcal{M}_{g}(c^{\ori})\to \mathbb{QP}^1$ as follows:
\begin{lemma}\label{lem:signature round-cob}
Let $\varphi\in\mathcal{M}_g(c)$. Then, we have
\[s(\varphi)=
\begin{cases}
\sign(m(\varphi)),&\text{ if }c \text{ is non-separating and } \varphi \text{ preserves the orientation of } c,\\
0,&\text{ otherwise}.
\end{cases}
\]
\end{lemma}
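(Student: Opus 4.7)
The plan is to apply Wall's non-additivity formula to the $4$-manifold $W = (V_\varphi \times [0,1]) \cup R$. I would decompose $W$ along an extension of the attaching $3$-manifold of the round handle $R$ (a collar of its boundary surface in $V_\varphi \times \{0\}$) pushed into the interior of the product $V_\varphi \times [0,1]$. Since both $V_\varphi \times [0,1]$ and $R$ individually have vanishing signature (the former being a product of a $3$-manifold with an interval, the latter a $D^2$-bundle, or twisted analogue, over a surface), the entire signature of $W$ equals Wall's correction term $\sigma(L_-, L_+, L_0)$, the Maslov triple index of three Lagrangian subspaces of the symplectic space $H_1(\Sigma; \mathbb{Q})$, where $\Sigma$ is the corner surface at the junction of the splitting $3$-manifold and $\partial W$.

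I would first dispose of the ``otherwise'' cases. When $c$ is separating of type $\II_h$, cutting $\Sigma_g$ along $c$ yields $\Sigma_{h,1} \sqcup \Sigma_{g-h,1}$, and the round-handle attachment correspondingly separates $V_\varphi$ along the torus $c \times S^1$ into two independent mapping tori. Consequently $W$ decomposes as a union of two $4$-manifolds each fibering over an annulus, both of signature zero. Equivalently, in the basis $\{e_1 + e_2,\, p(e_3 - e_4) + q e_1\}$ for the kernel defining $m(\tilde\varphi) = [p:q]$, one has $p = 0$ because $e_3 - e_4$ already vanishes in $H_1(V'_{\tilde\varphi};\mathbb{Q})$ when the cut surface is disconnected, and Wall's correction therefore vanishes. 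When $c$ is non-separating and $\varphi$ reverses the orientation of $c$, the twisted identification $(v,s,1) \sim (\bar v, -s, 0)$ makes the corner surface a Klein bottle; its $H_1(\,\cdot\,;\mathbb{Q})$ is $1$-dimensional with degenerate intersection form, so the Maslov triple index degenerates to zero. Alternatively, the involution $(v,s,t) \mapsto (\bar v, -s, t)$ on $R$ extends to an orientation-reversing involution of $W$, forcing $\Sign W = 0$.

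For the main case ($c$ non-separating, $\varphi$ orientation-preserving), the corner surface is the torus $T^2 = c \times S^1 \subset V_\varphi \times \{0\}$, with $H_1(T^2; \mathbb{Q}) \cong \mathbb{Q}^2$ carrying a nondegenerate symplectic form. Cutting $V_\varphi$ along $c \times S^1$ yields the mapping torus $V'_{\tilde\varphi}$ of the restriction $\tilde\varphi \in \mathcal{M}_{g-1,2}$ of $\varphi$ to $\Sigma_g \setminus c \cong \Sigma_{g-1,2}$. I would identify the three Lagrangian subspaces of $H_1(T^2; \mathbb{Q})$ as follows: $L_R$, the kernel of $H_1(T^2;\mathbb{Q}) \to H_1(R;\mathbb{Q})$, spanned by the meridian of the disk fiber of $R$; $L_V$, the kernel coming from the $V'_{\tilde\varphi}$ side, which by construction of $m(\tilde\varphi) = [p:q]$ is the image in $H_1(T^2;\mathbb{Q})$ of the line spanned by $p(e_3 - e_4) + q e_1$; and $L_0$, a third Lagrangian coming from the complementary collar direction of $V_\varphi \times [0,1]$. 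An elementary computation of the Maslov triple index $\sigma(L_R, L_V, L_0)$ in this $2$-dimensional symplectic space then evaluates to $\sign(p/q) = \sign(m(\varphi))$.

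The main obstacle is the algebraic identification of the Lagrangian $L_V$ with the line singled out in the definition of $m$ in \cite{sato2008cfm}, together with careful bookkeeping of orientation conventions so that the sign coming out of Wall's formula agrees with $\sign(m(\varphi))$ and not its negative. The Maslov triple index in a $2$-dimensional symplectic space is itself an elementary invariant (taking values in $\{-1, 0, 1\}$), but matching the topological gluing data with the abstract algebraic setup that defines $m$ is the delicate step, and it is essentially the content that motivated the second author's construction of $m$ for round-handle attachments of this type.
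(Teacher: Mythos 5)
Your overall strategy coincides with the paper's: apply Wall's non-additivity formula \cite{wall1969nas} to the decomposition $Y_-=R$, $Y_+=V_\varphi\times[0,1]$ glued along the attaching region $X_0=\partial D^2\times[-1,1]\times[0,1]/\sim$, observe that both pieces have zero signature, and reduce everything to the correction term. The separating case is handled in essentially the same way (the two relevant kernels coincide, so $W=B\cap(C+A)/((B\cap C)+(B\cap A))=0$), and for the main case the paper, like you, does not recompute the correction term but cites \cite[Lemma 3.4]{sato2008cfm}; your ``delicate step'' of matching the Lagrangian with the line defining $m$ is exactly what is outsourced there.

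However, two of your specific claims are wrong as stated. First, in the orientation-reversing case the corner $Z=\partial D^2\times\{-1,1\}\times[0,1]/\bigl((v,s,1)\sim(\bar v,-s,0)\bigr)$ is \emph{not} a Klein bottle: the gluing map on $\partial D^2\times\{-1,1\}$ squares to the identity and its return map is orientation-preserving, so $Z$ is a torus with $H_1(Z;\mathbb{Q})\cong\mathbb{Q}^2$ (the paper exhibits the basis $e_1=[\partial D^2\times\{-1\}]$, $e_2=[f]$). The vanishing of the correction comes not from degeneracy of the intersection form but from the coincidence $A=B=\mathbb{Q}e_1$ of the kernels from the disk side and the attaching region, which forces $W=0$. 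Your fallback argument via an orientation-reversing involution of the whole of $(V_\varphi\times[0,1])\cup R$ is also unjustified: the involution $(v,s,t)\mapsto(\bar v,-s,t)$ is defined on $R$ but there is no reason it extends over $V_\varphi\times[0,1]$ for an arbitrary monodromy $\varphi$. Second, in the main case the corner is $\partial X_0=\partial D^2\times\{-1,1\}\times S^1$, i.e.\ \emph{two} tori, so the symplectic space is $H_1(Z;\mathbb{Q})\cong\mathbb{Q}^4$ with basis $e_1,\dots,e_4$, not a rank-two space attached to $c\times S^1$ (which is the core of the attaching region, not the corner). The reduction from this rank-four computation to the projective invariant $[p:q]=m(\varphi)$ and the resulting sign is precisely the content of \cite[Lemma 3.4]{sato2008cfm}; if you wish to avoid the citation you must carry out that computation, since the two-dimensional Maslov-index shortcut you describe does not literally apply to the corner that your own decomposition produces.
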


\begin{proof}
We apply Wall's nonadditivity Formula to the pasting of the round 2-handle.
First, we review his formula.
Let $X_{-}$, $X_0$, and $X_+$ be compact 3-manifolds, and let $Y_-$ and $Y_+$ be compact 4-manifolds such that
\[
\partial X_-=\partial X_+=\partial X_+=Z,\quad \partial Y_-=X_-\cup X_0,\quad \partial Y_+=X_+\cup X_0.
\]
We denote by $Y$ and $X$ the compact 4-manifold $Y=Y_-\cup Y_+$ and the space $X=X_-\cup X_0\cup X_+$, respectively. Suppose that $Y$ is oriented inducing orientations of $Y_-$ and $Y_+$. Orient the other manifolds so that
\[
\partial_*[Y_-]=[X_0]-[X_-],\quad \partial_*[Y_+]=[X_+]-[X_0],\quad \partial_*[X_-]=\partial_*[X_+]=\partial_*[X_0]=[Z].
\]

Let $V$, $A$, $B$, and $C$ denote the vector spaces $V=H_1(Z;\mathbb{Q})$, $A=\Ker(H_1(Z;\mathbb{Q})\to H_1(X_-;\mathbb{Q}))$, $B=\Ker(H_1(Z;\mathbb{Q})\to H_1(X_0;\mathbb{Q}))$, and $C=\Ker(H_1(Z;\mathbb{Q})\to H_1(X_+;\mathbb{Q}))$. 
On the vector space $W=B\cap(C+A)/((B\cap C)+(B\cap A))$,
Wall defined a symmetric bilinear map $\Psi:W\times W\to \mathbb{Q}$ as follows.
Let $I:H_1(Z;\mathbb{Q})\times H_1(Z;\mathbb{Q})\to \mathbb{Q}$ denote the intersection form, and $b,b'\in B\cap(C+A)$.
Since $b'\in B\cap(C+A)$, there exist $c'\in C$ and $a'\in A$ such that $a'+b'+c'=0$.
Then, define a map $\Psi:W\times W\to \mathbb{Q}$ by $\Psi'([b],[b'])=I(b,c')$.
He showed that this map is well-defined and symmetric.
Denote by $\Sign(V;B,C,A)$ the signature of this symmetric bilinear form.
His signature formula is:
\begin{theorem}[Wall \cite{wall1969nas}]
\[
\Sign Y=\Sign Y_-+\Sign Y_+ -\Sign(V;B,C,A).
\]
\end{theorem}
Next, we apply his formula to our settings.
We should let $Y_-$ and $Y_+$ denote the manifolds
\begin{eqnarray*}
&Y_-=D^2\times [-1,1]\times [0,1]/\sim\text{ and } Y_+=V_\varphi\times [0,1],
\end{eqnarray*}
respectively.
The rest of the manifolds are 
\begin{eqnarray*}
&\partial Y_-=(\partial D^2\times [-1,1]\times [0,1]/\sim)\cup(D^2\times \{-1,1\}\times [0,1]/\sim),\\
&\partial Y_+=(V_\varphi\times \{1\})\amalg (V_\varphi\times \{0\}),\\
&X_0=\partial D^2\times [-1,1]\times [0,1]/\sim,\quad X_-=D^2\times \{-1,1\}\times [0,1]/\sim,\\
&X_+=(V_\varphi\times \{1\})\amalg (V_\varphi\times\{0\}-\Int j(X_0)),\quad Z=\partial D^2\times \{-1,1\}\times [0,1]/\sim.
\end{eqnarray*}
Consider the case when $T|_{\nu(c)}=id$.
Choose a point $x$ in $\partial D^2$.
Define continuous maps $f_i:S^1\to \partial D^2\times \{-1,1\}\times S^1$ by $f_i(t)=(x,(-1)^i,t)$ for $i=1,2$.
Then, the set consisting of the homology classes
$e_1=[\partial D^2\times \{-1\}]$, $e_2=[\partial D^2\times \{1\}]$, $e_3=[f_1]$, and $e_4=[f_2]$ in $H_1(Z;\mathbb{Q})$ forms a basis.

When $c$ is separating, we have $A=C=\mathbb{Q}e_1\oplus \mathbb{Q}e_2$. Hence, we obtain $W=(B\cap (C+A))/((B\cap C)+(B\cap A))=0$. When $c$ is non-separating, $\Sign (V_\varphi\times[0,1]\cup R)$ is calculated in \cite[Lemma 3.4]{sato2008cfm}.

Consider the case when $T|_{\nu(c)}=r$.
In this case, the curve $c$ is non-separating.
Define a continuous map $f:S^1\to \partial D^2\times \{-1,1\}\times[0,1]/\sim$ by
\[
f(t)=
\begin{cases}
(x,-1,2t)& \text{when }0\le t \le \displaystyle \frac{1}{2},\\
(x,1,2t-1)& \text{when }\displaystyle\frac{1}{2}\le t \le 1.\\
\end{cases}
\]
The set of homology classes consisting of $e_1=[\partial D^2\times \{-1\}]$ and $e_2=[f]$ in $H_1(Z;\mathbb{Q})$ forms a basis.
In this case, $A=B=\mathbb{Q}e_1$.
Hence, we have $W=0$.

\end{proof}

\subsection{The homomorphism $h_{g,c}$}\label{section:homomorphism h_{g,c}}

Let $c$ be a simple closed curve in $\Sigma_g$.
Since the neighborhood $\nu(c)$ of $c$ is diffeomorphic to $\partial D^2\times[-1,1]$, 
we obtain a manifold $L(c)=\Sigma_g\times[0,1]\cup_{\nu(c)} (D^2\times[-1,1])$ by gluing $D^2\times[-1,1]$ along $\nu(c)$.
This is diffeomorphic to a fiber of the projection $(V_\varphi\times[0,1])\cup R\to S^1$ to the last factor.
We denote $\tilde{V}_{\varphi}=(V_{\varphi}\times[0,1])\cup R$ in the following.

Let $\varphi$ and $\psi$ be mapping classes in $\mathcal{M}_g(c)$.
For example, by gluing the $L(c)$-bundles $\tilde{V}_{\varphi}\times [0,1]$ and $\tilde{V}_{\psi}\times [0,1]$ on an annulus,
we obtain a $L(c)$-bundle over $S^2-\amalg_{i=1}^3 \Int D^2$
whose fiberwise boundary is $E_{\varphi,\psi}\amalg -E_{\Phi(\varphi),\Phi(\psi)}$ and
the whole boundary is
\[
(E_{\varphi,\psi}\amalg -E_{\Phi(\varphi),\Phi(\psi)})\cup_{\partial E_{\varphi,\psi}\amalg -\partial E_{\Phi(\varphi),\Phi(\psi)}} (-\tilde{V}_\varphi\amalg -\tilde{V}_\psi\amalg -\tilde{V}_{(\varphi\psi)^{-1}}).
\]
Hence, we have
\[
\Sign E_{\varphi,\psi}-\Sign E_{\Phi(\varphi),\Phi(\psi)}-\Sign \tilde{V}_\varphi-\Sign \tilde{V}_\psi- \Sign \tilde{V}_{(\varphi\psi)^{-1}}=0.
\]
If we rewrite it by Meyer's signature cocycle and the function $s:\mathcal{M}_g(c)\to\mathbb{Z}$, we have
\[
\begin{cases}
-\tau_g(\varphi,\psi)+\Phi^*\tau_{g-1}(\varphi,\psi)-\delta s(\varphi,\psi)=0\in C^2(\mathcal{M}_g(c);\mathbb{Z})&\text{ if }c\text{ is type }\I,\\
-\tau_g(\varphi,\psi)+\Phi^*(\tau_{h}\times\tau_{g-h})(\varphi,\psi)-\delta s(\varphi,\psi)=0\in C^2(\mathcal{M}_g(c^{\ori});\mathbb{Z})&\text{ if }c\text{ is type }\II_h.
\end{cases}
\]

If we restrict the Meyer cocycles to $\mathcal{H}_g$,
we have $\tau_g=\delta \phi_g\in C^2(\mathcal{H}_g;\mathbb{Q})$, and
$\tau_{g-1}=\delta \phi_{g-1}\in C^2(\mathcal{H}_{g-1};\mathbb{Q})$.
Thus, we have proved:
\begin{lemma}\label{lem:homo-h}
When $c$ is type $\I$, define a function $h_{g,c}:\mathcal{H}_g(c)\to \mathbb{Q}$ by
\[
h_{g,c}(\varphi)=s(\varphi)+\phi_g(\varphi)-\Phi^*\phi_{g-1}(\varphi).
\]
When $c$ is type $\II_h$, define $h_{g,c}:\mathcal{H}_g(c^{\ori})\to \mathbb{Q}$ by
\[
h_{g,c}(\varphi)=s(\varphi)+\phi_g(\varphi)-\Phi^*(\phi_{h}\times\phi_{g-h})(\varphi).
\]
Then, both of these maps are homomorphisms.
\end{lemma}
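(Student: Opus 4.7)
The plan is to read the cocycle identity displayed just above the lemma as the statement $\delta h_{g,c}=0$, so that $h_{g,c}$ is a $1$-cocycle on the relevant group and hence automatically a homomorphism.

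First I would verify that $\Phi_n$ and $\Phi_s$ restrict to homomorphisms $\mathcal{H}_g(c)\to\mathcal{H}_{g-1}$ and $\mathcal{H}_g(c^{\ori})\to\mathcal{H}_h\times\mathcal{H}_{g-h}$ respectively. Using Lemma \ref{lem:isomH_g(c)}, I pick a representative $T\in C(\iota_g)$ of $\varphi$ with $T(c)=c$, and observe that the induced diffeomorphism $\hat T$ on the cut surface commutes with the natural hyperelliptic involution $\iota_{g-1}$ on $\Sigma_{g-1}$ in the non-separating case, or with $\iota_h\amalg\iota_{g-h}$ on $\Sigma_h\amalg\Sigma_{g-h}$ in the separating case. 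This ensures that the pullbacks $\Phi_n^*\phi_{g-1}$ and $\Phi_s^*(\phi_h\times\phi_{g-h})$ are well-defined $1$-cochains on the domains of $h_{g,c}$, and that their coboundaries equal $\Phi_n^*\tau_{g-1}$ and $\Phi_s^*(\tau_h\times\tau_{g-h})$; for the second case one invokes the additivity of Meyer's cocycle under disjoint union of fiber bundles.

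Next I substitute Endo's cobounding relation $\tau_g=\delta\phi_g$ on $\mathcal{H}_g$, together with the analogous relations on the targets of $\Phi_n$ and $\Phi_s$, into the identity
\[
-\tau_g+\Phi^*\tau_{g-1}-\delta s=0
\]
(and its type $\II_h$ variant) recorded immediately before the lemma. This yields $\delta(s+\phi_g-\Phi^*\phi_{g-1})=0$, that is $\delta h_{g,c}=0$; since a $1$-cochain on a group with vanishing coboundary is exactly a homomorphism, the lemma follows.

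The main point to check is the compatibility built into the first step: that $\Phi$ really lands in the hyperelliptic mapping class group of the cut surface, so that $\phi_{g-1}$ or $\phi_h\times\phi_{g-h}$ can legitimately be evaluated on $\Phi(\varphi)$. Once that is in place the rest is a mechanical rewriting of the cocycle identity already established in the paragraph preceding the lemma.
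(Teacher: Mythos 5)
Your proposal is correct and follows essentially the same route as the paper: the identity $-\tau_g+\Phi^*\tau_{g-1}-\delta s=0$ (and its type $\II_h$ analogue) established just before the lemma, combined with Endo's cobounding relations $\tau_g=\delta\phi_g$ on the hyperelliptic groups, gives $\delta h_{g,c}=0$, i.e.\ $h_{g,c}$ is a homomorphism. Your additional check that $\Phi_n$ and $\Phi_s$ carry $\mathcal{H}_g(c)$ and $\mathcal{H}_g(c^{\ori})$ into the hyperelliptic mapping class groups of the cut surfaces (via Lemma \ref{lem:isomH_g(c)}) is a point the paper leaves implicit, and is a worthwhile clarification rather than a deviation.
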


\begin{proof}[Proof of Proposition \ref{prop:values of h}]
First, consider the case when the vanishing cycle $c$ is type $\I$ in Figure \ref{fig:vanishing cycles}.
Since $h_{g,c}$ is a homomorphism, we have $h_{g,c}(\iota_g)=0$.
The mapping classes $t_{c_i}$ for $i=1,2,\cdots,2g-1$ are mutually conjugate in $\mathcal{H}_g(c)$.
Therefore, we have $h_{g,c}(t_{c_1})=\cdots=h_{g,c}(t_{c_{2g-1}})$. 
By the chain relation,
we have $(t_{c_1}\cdots t_{c_{2g-1}})^{2g}=t_{c_{2g+1}}^2$.
Thus, we obtain $h_{g,c}(t_{c_{2g+1}})=g(2g-1)h_{g,c}(t_{c_1})$. 
Hence, it suffices to show that $h_{g,c}(\sigma_{2g+1})=-g/(2g+1)$.

In \cite[Lemma 3.3]{endo2000mss}, Endo showed that $\phi_g(t_{c_{2g+1}})=(g+1)/(2g+1)$.
Since $\Phi(t_{c_{2g+1}})=1\in\mathcal{M}_{g-1}$,
we have $\Phi^*\phi_{g-1}(t_{c_{2g+1}})=0$.
By Lemma \ref{lem:signature round-cob}, we have
\[
s(t_{c_{2g+1}})=\sign m(t_{c_{2g+1}})=\sign([1:-1])=-1.
\]
Thus, we obtain $h_{g,c}(t_{c_{2g+1}})=-g/(2g+1)$.

Next, consider the case when the vanishing cycle $c$ is type $\II$ in Figure \ref{fig:vanishing cycles}.
When $1\le h\le g-1$, this follows from \cite[Lemma 3.3]{endo2000mss} since $s(t_{c_i})=0$.
When $h=0,g$, $h_{g,c}$ is the zero map since $H^1(\mathcal{H}_g(c);\mathbf{Q})=H^1(\mathcal{H}_g;\mathbf{Q})=0$.
\end{proof}

\subsection{Proof of Theorem \ref{thm:main theorem}}\label{section:localization}

We prepare the hyperelliptic mapping class group of the non-connected surface $f^{-1}(r_i)$,
where the monodromy of it along $\partial_0 A_i$ lies.
Identify $f^{-1}(r_i)$ with some standard surface $S_i=\Sigma_{n_i(1)}\amalg\cdots\amalg \Sigma_{n_i(k_i)}$,
where $n_i(1),\cdots, n_i(k_i)$ are non-negative integers.
We may assume that the action on $f^{-1}(r_i)$ induced by $\iota_g$ in Definition \ref{def:hyperelliptic} coincides with $\iota_{n_i(1)}\amalg\cdots\amalg\iota_{n_i(k_i)}$, 
and the vanishing cycle $d_i$ lies in $\Sigma_{n_i(1)}$ ($n_i(1)=g_i$).
Define groups $\mathcal{H}_{S_i}$ and $\mathcal{H}_{S_i}(d_i)$ by
$\mathcal{H}_{S_i}=\mathcal{H}_{n_i(1)}\times\cdots\mathcal{H}_{n_i(k_i)}$
and $\mathcal{H}_{S_i}(d_i)=\mathcal{H}_{n_i(1)}(d_i)\times\mathcal{H}_{n_i(2)}\times\cdots\mathcal{H}_{n_i(k_i)}$, respectively.
By Definition \ref{def:hyperelliptic},
the monodromy $\tilde{\varphi}_1$ along $\partial_0 A_1$ is contained in $\mathcal{H}_{S_1}(d_1)$.
Denote by $\Phi$ the homomorphism $\Phi_n$ if $d_1$ is non-separating, and $\Phi_s$ if $d_1$ is separating.
As stated in Section \ref{section:round cobordisms},
the monodromy $\tilde{\varphi}_2$ of $f^{-1}(r_2)$ along $\partial_0 A_2$ is the image of $\tilde{\varphi}_1\in\mathcal{H}_{S_1}(d_1)$ under $\Phi:\mathcal{H}_{S_1}(d_1)\to\mathcal{H}_{S_2}$.
By Lemma \ref{lem:baykur}, it is contained in $\mathcal{H}_{S_2}(d_2)$.
Define a natural homomorphism $\Phi_{S_i}:\mathcal{H}_{S_i}(d_i)\to\mathcal{H}_{S_{i+1}}$ by $\Phi_{S_i}(x_1,x_2,\cdots,x_{k_i})=(\Phi(x_1),x_2,\cdots,x_{k_i})$, for $i=1,\cdots,m$, where $\Phi$ denotes $\Phi_n$ if $d_i$ is non-separating, and $\Phi_s$ if $d_i$ is separating.
Inductively, the monodromy $\tilde{\varphi}_i$ along $\partial_0 A_i$ is contained in $\mathcal{H}_{S_i}(d_i)$,
and $\tilde{\varphi}_{i+1}=\Phi_{S_i}(\tilde{\varphi}_i)$.

By the Novikov additivity, we have
\begin{align*}
\Sign M=&\sum_{i=1}^m \Sign f^{-1}(A_i)+\Sign f^{-1}(D_l)+\Sign f^{-1}(D_h)\\
=&\sum_{i=1}^m \Sign f^{-1}(A_i)+\Sign f^{-1}(D_h-\coprod_{j=1}^{n}\Int\nu(y_j))+\sum_{j=1}^{n}\Sign f^{-1}(\nu(y_j)).
\end{align*}

Define the Meyer function $\phi_{S_i}:\mathcal{H}_{S_i}\to\mathbb{Q}$ by
$\phi_{S_i}(x_1,\cdots,x_{k_i})=\sum_{j=1}^{k_i}\phi_{S_i}(x_j)\in C^1(\mathcal{H}_{S_i};\mathbb{Q})$.
Let $\psi_j\in \mathcal{H}_g$ denote the monodromy along the loop $a_j$ around the image $y_i\in D_h$ of each Lefschetz singularity for $j=1,\cdots,n$ in Section \ref{section:monodromy}.
By Lemma \ref{lem:meyer} and Lemma \ref{lem:signature round-cob}, we have
\[
\Sign M=\sum_{i=1}^{m}s(\varphi_i)+\left(-\phi_g(\tilde{\varphi}_1^{-1})-\sum_{j=1}^{n}\phi_g(\psi_j)\right)+\sum_{j=1}^{n}\Sign f^{-1}(\nu(y_j)),
\]
where $\varphi_i$ is the monodromy of $f^{-1}(r_i)\cap M_i$ along $\partial_0A_i$.
Since $f^{-1}(D_l)$ is a trivial bundle, we have $\tilde{\varphi}_{m+1}=1\in\mathcal{H}_{S_{m+1}}$.
Since $\Phi_{S_i}(\tilde{\varphi}_i)=\tilde{\varphi}_{i+1}\in\mathcal{H}_{S_{i+1}}(d_{i+1})$, we have
\[
\sum_{i=1}^m(\phi_{S_i}(\tilde{\varphi}_i)-\Phi_{S_i}^*\phi_{S_{i+1}}(\tilde{\varphi}_i))
=\phi_g(\tilde{\varphi}_1).
\]
Since the Meyer function has the property $\phi_g(\varphi^{-1})=-\phi_g(\varphi)$ (see \cite{endo2000mss}) for any $\varphi\in\mathcal{H}_g$,
we obtain
\[
\Sign M=\sum_{i=1}^{m}\left(s(\varphi_i)+\phi_{S_i}(\tilde{\varphi}_i)-\Phi_{S_i}^*\phi_{S_{i+1}}(\tilde{\varphi}_i)\right)+\sum_{j=1}^{n}(-\phi_g(\psi_j)+\Sign f^{-1}(\nu(y_j))).
\]
By the definition of $\Phi_{S_i}$, we have
\begin{align*}
&\phi_{S_i}(x_1,\cdots,x_{k_i})-\Phi_{S_{i+1}}^*\phi_{S_{i+1}}(x_1,\cdots,x_{k_{i+1}})\\
=&
\begin{cases}
\phi_{g_i}(x_1)-\Phi_n^*\phi_{g_i}(x_1), & \text{ if } d_i \text{ is nonseparating},\\
\phi_{g_i}(x_1)-\Phi_s^*(\phi_{h}\times\phi_{g_i-h})(x_1), &\text{ if } d_i \text{ bounds subsurfaces of genus } h \text{ and }g_i-h.
\end{cases}
\end{align*}
Thus, we have
\[
\Sign M=\sum_{i=1}^{m}h_{g_i,d_i}(\varphi_i)+\sum_{j=1}^{n}\sigma_{\loc}(f^{-1}(y_j)).
\]

\section{Examples}\label{section:example}

Let $c_1,\ldots,c_n\subset\Sigma_g$ be simple closed curves described in Figure \ref{fig:scc}. 

\begin{example}\label{ex:exmp1}
As shown in the proof of \cite[Theorem 1.4]{hayano2011nsb}, there exists a simplified BLF $f_{g,n}:M_{g,n}\rightarrow S^2$ which has the following Hurwitz system: 
\[
(t_{c_{2g}}\cdots t_{c_2}t_{c_1}^2t_{c_2}\cdots t_{c_{2g}})^{2n},
\]
and the vanishing cycle of the indefinite fold is $c_{2g+1}$.
\end{example}
By the definition of $f_{g,n}$, it is hyperelliptic. 
We denote by $y_1,\ldots,y_{8gn}\in S^2$ the critical values of $f_{g,n}$. 
By using the formula in Theorem \ref{thm:main theorem}, the signature of $M_{g,n}$ can be calculated as follows: 
{\allowdisplaybreaks
\begin{align*}
\Sign{M_{g,n}} & = \sum_{i=1}^{8gn}\sigma_{\text{loc}}(f_{g,n}^{-1}(y_i)) + h((t_{c_{2g}}\cdots t_{c_2}{t_{c_1}}^2t_{c_2}\cdots t_{c_{2g}})^{2n}) \\
& = 8gn\cdot \frac{-g-1}{2g+1} + h (t_{c_{2g+1}}^{-4n}) \\
& = \frac{-8g^2n-8gn}{2g+1} + (-4n)\cdot \frac{-g}{2g+1} \\
& = -4gn. 
\end{align*}}
It is easy to see that $M_{g,n}$ is simply connected and that the Euler characteristic of $M_{g,n}$ is $8gn-4g+6$. 
As shown in \cite{hayano2011nsb}, $M_{g,n}$ is spin if and only if both of the integers $g$ and $n$ are even. 
Thus, by Freedman's theorem, $M_{g,n}$ is homeomorphic to $\# \dfrac{gn}{4} E(2)\# (\dfrac{5gn}{4}-2g+2) S^2\times S^2$ if both $g$ and $n$ are even 
and $\# (2gn-2g+2)\mathbb{CP}^2 \# (6gn-2g+2) \overline{\mathbb{CP}^2}$ otherwise.

\begin{example}
As shown in the proof of \cite[Theorem 1.4]{hayano2011nsb}, there exists a simplified BLF $\tilde{f}_{g,n}:\tilde{M}_{g,n}\rightarrow S^2$ which has the following Hurwitz system: 
\[
(t_{c_{2g}}\cdots t_{c_2}t_{c_1}^2t_{c_2}\cdots t_{c_{2g}})^{2n}\cdot (t_{c_1}\cdots t_{c_{2g-2}})^{2(2g-1)n},
\]
and a vanishing cycle of the indefinite fold is $c_{2g+1}$.
\end{example}
By the definition of $\tilde{f}_{g,n}$, it is hyperelliptic. 
We denote by $\tilde{y}_1,\ldots,\tilde{y}_{8g^2n-4gn+4n}\in S^2$ the critical values of $\tilde{f}_{g,n}$. 
By using the formula in Theorem \ref{thm:main theorem}, the signature of $\tilde{M}_{g,n}$ can be calculated as follows: 
{\allowdisplaybreaks
\begin{align*}
\Sign{\tilde{M}_{g,n}} & = \sum_{i=1}^{8g^2n-4gn+4n}\sigma_{\text{loc}}(\tilde{f}_{g,n}^{-1}(\tilde{p}_i)) + h((t_{c_{2g}}\cdots t_{c_2}{t_{c_1}}^2t_{c_2}\cdots t_{c_{2g}})^{2n}\cdot (t_{c_1}\cdots t_{c_{2g-2}})^{2(2g-1)n}) \\
& = (8g^2n-4gn+4n)\cdot \frac{-g-1}{2g+1} + 2n \cdot h (t_{c_{2g+1}}^{-2}\cdot \iota_g) +2(2g-1)n\cdot h(t_{c_1}\cdots t_{c_{2g-2}}) \\
& = \frac{-8g^3n+4g^2n-4gn-8g^2n+4gn-4n}{2g+1} -4n \cdot\frac{-g}{2g+1} + 2(2g-1)n (2g-2)\cdot\frac{-1}{4g^2-1} \\
& = -4g^2n.
\end{align*}}

It is easy to see that $\tilde{M}_{g,n}$ is simply connected,
and that the Euler characteristic of $\tilde{M}_{g,n}$ is $8g^2n-4gn+4n-4g+6$. 
As shown in \cite{hayano2011nsb}, $\tilde{M}_{g,n}$ is spin if and only if $g$ is even. 
Thus, we can easily determine the homeomorphism type of $\tilde{M}_{g,n}$ as in Example \ref{ex:exmp1}.  

\noindent
{\bf Acknowledgments. }
The authors would like to express their gratitude to Hisaaki Endo for his continuous support during the course of this work, Nariya Kawazumi for his helpful comments for the draft of this paper. 
The first author is supported by Yoshida Scholarship 'Master21' and he is grateful to Yoshida Scholarship Foundation for their support. The second author is supported by JSPS Research Fellowships for Young Scientists (22-2364).

\end{document}